\documentclass[11pt,oneside]{amsart}

\usepackage[margin=1in, letterpaper]{geometry}
\usepackage{amscd}
\usepackage{amsfonts}
\usepackage{amsmath}
\usepackage{amssymb}
\usepackage{amsthm}
\usepackage[mathscr]{euscript}
\usepackage{ifthen}
\usepackage{mathtools}
\usepackage{tikz-cd}
\usepackage{graphicx}
\usepackage{xcolor}
\usepackage{rotating}
\usepackage{tikz}
\usepackage{hyperref}
\usepackage[capitalize]{cleveref}
\usetikzlibrary{matrix, arrows}
\usepackage{url}
\usepackage{wrapfig}
\usetikzlibrary{matrix, arrows, decorations.markings, decorations.pathreplacing, positioning, arrows.meta}
\usepackage{todonotes}
\usepackage{stmaryrd}
\usepackage{mathdots}
\usepackage{mathrsfs}
\usepackage{manfnt}

\tikzset{
	etail/.style={postaction={
  	decorate,
      decoration={markings, mark=at position 0.1 with
    	{\node {\hspace{0pt}\raisebox{2.5pt}{$\circ$}};}}
  }}
} 
\tikzset{
	etailhor/.style={postaction={
  	decorate,
      decoration={markings, mark=at position 0 with
    	{\node {\hspace{-2.5pt}\raisebox{-1.6pt}{$\circ$}};}}
  }}
} 

\numberwithin{equation}{section}
\theoremstyle{plain}
\newtheorem{theorem}[equation]{Theorem}
\newtheorem{lemma}[equation]{Lemma}
\newtheorem{proposition}[equation]{Proposition}
\newtheorem{corollary}[equation]{Corollary}

\theoremstyle{definition}
\newtheorem{definition}[equation]{Definition}

\theoremstyle{remark}
\newtheorem{remark}[equation]{Remark}
\newtheorem{example}[equation]{Example}
\newtheorem{non-example}[equation]{Counterexample}

\theoremstyle{plain}

\renewcommand{\bar}[1]{\ensuremath{\overline{#1}}}

\newcommand{\A}{\mathcal{A}}
\newcommand{\C}{\mathcal{C}}
\newcommand{\CC}{\mathbf{C}}
\newcommand{\D}{\mathbb{D}}

\newcommand{\T}{\mathfrak{T}}

\newcommand{\E}{\mathcal{V}(\D)}
\newcommand{\cE}{\mathcal{E}}
\newcommand{\M}{\mathcal{H}(\D)}
\newcommand{\squareE}{\mathcal{H}(\D_v)}
\newcommand{\squareM}{\mathcal{V}(\D_h)}
\newcommand{\triE}{\mathcal{V}^\triangleleft(\D_v)}
\newcommand{\triM}{\mathcal{H}^\triangleleft(\D_h)}

\renewcommand{\H}{\mathbb{H}}
\newcommand{\V}{\mathbb{V}}
\newcommand{\vop}{{\mathsf{vop}}}
\newcommand{\hop}{{\mathsf{hop}}}
\newcommand{\opp}{{\mathsf{op}}}

\newcommand{\tlift}{\mathsf{2copy}}
\newcommand{\ilift}{\mathsf{isocopy}}
\newcommand{\ulift}{\mathsf{agree}}
\newcommand{\arrlift}{\mathsf{partcyl}}
\newcommand{\arrfill}{\mathsf{fillcyl}}

\newcommand{\hv}{\mathbf{hv}}
\newcommand{\vh}{\mathbf{vh}}

\newcommand{\flatten}{\mathsf{simp}}
\newcommand{\truncateh}{\mathsf{trunc}_h}
\newcommand{\truncatev}{\mathsf{trunc}_v}
\newcommand{\rotatel}{\mathsf{rot}_{\ulcorner}}
\newcommand{\rotater}{\mathsf{rot}_{\lrcorner}}
\newcommand{\invert}{\mathsf{inv}}
\newcommand{\doublerotate}{\mathsf{rotfold}}
\newcommand{\hiso}{\mathsf{hiso}}
\newcommand{\viso}{\mathsf{viso}}
\newcommand{\toprightcorner}{\mathsf{trcor}}
\newcommand{\bottomleftcorner}{\mathsf{blcor}}
\newcommand{\uniquetoprightcorner}{\ulift(\toprightcorner)}
\newcommand{\uniquebottomleftcorner}{\ulift(\bottomleftcorner)}
\newcommand{\mono}{\mathsf{mono}}
\newcommand{\uniquearrow}{\mathsf{uarr}}

\newcommand{\cat}{\mathcal{C}\mathsf{at}}
\newcommand{\Cat}{\mathbf{Cat}}
\newcommand{\DCat}{\mathbf{DCat}}
\newcommand{\DCati}{\mathbf{DCat}^{\cong}}
\newcommand{\TCat}{\mathbf{TCat}}

\newcommand{\Set}{\mathbf{Set}}
\newcommand{\SSet}{s\mathbf{Set}}

\newcommand{\SSSet}{ss\mathbf{Set}}
\newcommand{\ssSet}{\SSSet}

\newcommand{\Ar}{\operatorname{Ar}}
\newcommand{\cl}{\operatorname{cl}}

\newcommand{\coker}{\operatorname{coker}}

\newcommand{\Deltaop}{\Delta^{\op}}
\newcommand{\Hom}{\operatorname{Hom}}
\newcommand{\Homm}{\underline\Hom}

\newcommand{\id}{\operatorname{id}}
\newcommand{\Iso}{\operatorname{Iso}}
\newcommand{\iso}{\operatorname{iso}}
\renewcommand{\ker}{\operatorname{ker}}
\newcommand{\Map}{\operatorname{Map}}
\newcommand{\mor}{\operatorname{mor}}
\newcommand{\nerve}{\operatorname{nerve}}
\newcommand{\fund}{\tau_1}
\newcommand{\ob}{\operatorname{ob}}
\newcommand{\op}{\operatorname{op}}

\newcommand{\squares}{\operatorname{square}}

\DeclareMathOperator*{\colim}{colim}

\newcommand{\rto}{\to}

\newcommand{\mrto}[1][]{\hspace{1pt}\begin{tikzcd}[sep=20pt,cramped, ampersand replacement=\&, text height=1ex, text depth=.3ex]\rar[tail]{#1}\&{}\end{tikzcd}}

\newcommand{\erto}[1][]{\hspace{1pt}\begin{tikzcd}[sep=20pt,cramped, ampersand replacement=\&, text height=1ex, text depth=.3ex]\rar[etailhor]{#1}\&{}\end{tikzcd}}

\newcommand{\elto}[1][]{\hspace{1pt}\begin{tikzcd}[sep=20pt,cramped, ampersand replacement=\&, text height=1ex, text depth=.3ex]{}\&\lar[etailhor,swap]{#1}{}\end{tikzcd}}

\newcommand{\distsquare}[8]{\begin{tikzcd}[ampersand replacement=\&]
#1 \rar[tail]{#5} \dar[etail,swap]{#6} \ar[phantom]{dr}{\square} \& #2 \dar[etail]{#7} \\
#3 \rar[tail,swap]{#8} \& #4
\end{tikzcd}}

\newcommand{\parallelogram}{%
  \tikz[scale=0.25, baseline=-0.5ex]{%
    \draw (0,0) -- (1.1,0) -- (2.2,.7) -- (1.1,.7) -- cycle;
  }}

\newcommand{\parallelogramvert}{%
  \tikz[scale=0.25, baseline=-0.5ex]{%
    \draw (0,0) -- (0,1) -- (.9,1.6) -- (.9,.6) -- cycle;
  }}

\newcommand{\mycube}{\mbox{\mancube}}

\newsavebox{\mrtonested}
\newsavebox{\ertonested}
\newsavebox{\hvsquare}
\newsavebox{\hisquare}
\newsavebox{\visquare}
\newsavebox{\hvicube}

\newsavebox{\mrtonestedb}
\newsavebox{\ertonestedb}
\newsavebox{\hvsquareb}
\newsavebox{\hisquareb}
\newsavebox{\visquareb}
\newsavebox{\hvicubeb}

\newcommand{\dist}[2]{\ar[phantom,from=#1,to=#2,"\square"]}
\newcommand{\comm}[2]{\ar[phantom,from=#1,to=#2,"\circlearrowleft"]}
\newcommand{\vcong}{\cong}

\begin{document}

\title{Recognizing CGW categories among pointed stable double Segal spaces}

\author{Julia E.\ Bergner, Brandon T.\ Shapiro, and Inna Zakharevich}

\maketitle

\begin{abstract}
In this paper, we establish a precise relationship between CGW categories and pointed stable double Segal spaces, both of which were developed as general input for algebraic K-theory.  In particular, we show that any CGW category can be regarded as a pointed stable double Segal space, and they can be identified using a classifying diagram construction for double categories with shared isomorphisms. 
\end{abstract}

\tableofcontents

\section{Introduction}

CGW categories were first defined by Campbell and the third-named author as a framework for algebraic $K$-theory, generalizing exact categories to also include finite sets, algebraic varieties, and a range of other mathematical objects admitting abstract notions of kernels and cokernels or complements. Similarly to an exact category, a CGW category admits the construction of a $K$-theory space which can be defined by a generalization of Quillen's $Q$-construction or equivalently by a generalization of Waldhausen's $S_\bullet$-construction. These $K$-theory spaces were shown to satisfy appropriate versions of the D\'evissage and Localization theorems from classical algebraic $K$-theory, while Sarazola and the second-named author showed that under additional conditions the $S_\bullet$ construction also satisfies the Additivity and Fibration theorems \cite{CZ-cgw,SS-ecgw}.

From a quite different point of view, the theory of 2-Segal spaces was developed by Dyckerhoff and Kapranov, and independently under the name of decomposition spaces by G\'alvez-Carrillo, Kock, and Tonks.  Although these two sets of authors had quite different motivations for developing this theory, interestingly, they both identified the output of applying the $S_\bullet$-construction to an exact category as a key example.  In subsequent work, the first-named author, Osorno, Ozornova, Rovelli, and Scheimbauer proved that any 2-Segal space can be obtained from this construction, given a suitably general input, namely that of an augmented stable double Segal space.

Since both CGW categories and augmented stable double Segal spaces were developed to be general input for a $K$-theory construction, a natural question is then what the relationship is between the two.  Answering this question is the primary goal of this paper.

With an appropriate reworking of the definition of a CGW category, it is not hard to see that every CGW category can be regarded as a pointed stable double Segal space, but the converse is not true.  The next question is whether we can neatly characterize which pointed stable double Segal spaces correspond to CGW categories.  Somewhat surprisingly, the answer can be framed in terms of a double-categorical generalization of the classifying diagram construction of Rezk.

\subsection{Double categories as input for algebraic $K$-theory}

The notion of a double category, or category internal to categories, is central to this work, as CGW categories are examples of such and pointed stable double Segal spaces are suitable homotopical generalizations of them.  One should think of a double category as consisting of objects, two different kinds of morphisms, usually suggestively called horizontal and vertical morphisms, and squares, depicted as 
	\[ \distsquare{A}{B}{C}{D.}{}{}{}{} \] 
This structure is well-behaved, in that the objects together with either kind of morphism forms a category, and likewise each type of morphisms can be taken as the objects in a category whose morphisms are given by the squares.

In this paper, we consider double categories, or generalizations of them, that have an associated $K$-theory space, given by an application of a version of Waldhausen's $S_\bullet$-construction.  Quite broadly, such a construction associates to a double category some space of diagrams of the form 
\begin{equation} \label{sdotdiagram}
	\begin{tikzcd}
		\ast \rar[tail] & A_{0,1} \rar[tail] \dar[etail] & A_{0,2} \rar[phantom]{\cdots} \dar[etail] & A_{0,n \text{-} 1} \rar[tail] \dar[etail] & A_{0,n} \dar[etail] \\
		& \ast \rar[tail] & A_{1,2} \rar[phantom]{\cdots} & A_{1,n 	\text{-} 1} \rar[tail] \dar[phantom]{\vdots} & A_{1,n} \dar[phantom]{\vdots} \\
		& & \ddots & A_{n \text{-} 2,n \text{-} 1} \rar[tail] \dar[etail] & A_{n \text{-} 2,n} \dar[etail] \\
		& & & \ast \rar[tail] & A_{n \text{-} 1,n} \dar[etail] \\
		& & & & \ast
		\dist{1-2}{2-3} \dist{1-4}{2-5} \dist{3-4}{4-5}
	\end{tikzcd} 
\end{equation}
in which the object $\ast$ is initial in the horizontal category and terminal in the vertical category, so that the double category is \emph{pointed}.  We also want an analogue of the squares being bicartesian, meaning either of the pairs of morphisms 
\begin{equation} \label{regstable}
	\begin{tikzcd}
		\bullet \rar[tail] \dar[etail] & \bullet \\
 		\bullet & 
	\end{tikzcd}
	\qquad \qquad
	\begin{tikzcd}
		\bullet \rar[tail] \dar[etail] & \bullet \dar[etail] \\
		\bullet \rar[tail] & \bullet
		\dist{1-1}{2-2}
	\end{tikzcd}
	\qquad \qquad
	\begin{tikzcd}
		& \bullet \dar[etail] \\
		\bullet \rar[tail] & \bullet
	\end{tikzcd} 
\end{equation}
of a square, called a \emph{mixed span} and \emph{mixed cospan}, respectively, determine the square uniquely up to unique isomorphism. We refer to such a double category as \emph{stable}.

Alternatively, we can think of diagrams of the form \eqref{sdotdiagram} with the direction of the vertical morphisms reversed.  With this change, the squares are essentially uniquely determined by either of two pairs of boundary morphisms 
\begin{equation} \label{reversestable} 
	\begin{tikzcd}
		\bullet \rar[tail] & \bullet \dar[etail] \\
		& \bullet
	\end{tikzcd}
	\qquad  \qquad
	\begin{tikzcd}
		\bullet \rar[tail] \dar[etail] & \bullet \dar[etail] \\
		\bullet \rar[tail] & \bullet
		\dist{1-1}{2-2}
	\end{tikzcd}
	\qquad  \qquad
	\begin{tikzcd}
		\bullet \dar[etail] &  \\
		\bullet \rar[tail] & \bullet.
	\end{tikzcd} 
	\end{equation}
In this case the squares are no longer bicartesian, since the morphisms form neither a span nor a cospan, but we can still define stability in this context via such configurations determining a square uniquely.  The first formulation is meant to be suggestive of inclusions and quotients, as in short exact sequences in an exact category, whereas the second is better suited to situations in which we think of two different kinds of inclusion, such as open and closed immersions of algebraic varieties.
%but the desired ``bicartesian-ness'' is in the sense of an exact category where the vertical morphisms are reversed, so these boundary morphisms are different from the span and cospan of a square. \jbnoteil{reorganized this sentence to fit in diagram}

Let us now turn to the particular kinds of double categories that we consider in this paper.  A CGW category is a double category with additional structure encoding that every morphism of one type has a complementary morphism of the other type, and the squares induce isomorphisms on complements.  In particular we use the framework of \eqref{reversestable}.  An example is the double category of finite sets, where both types of morphisms are injections, the complementary pairs of morphisms are just complementary pairs of inclusions, and the squares are exactly the bicartesian squares.  
		
In order to compare with double categories for which stability is defined via \eqref{regstable}, we define \emph{reverse CGW} categories by dualizing a CGW categories in the vertical direction. We also show that this stability-like property is equivalent to the original definition in terms of complementary morphisms.  For example, an exact category can be thought of as a CGW category in which the horizontal morphisms are admissible monomorphisms and the vertical morphisms from $A$ to $B$ are admissible epimorphisms from $B$ to $A$; complementary pairs of morphisms are kernel-cokernel pairs, and squares are again bicartesian.  Reversing the direction of the vertical morphisms, we get a double category whose vertical morphisms are simply the admissible epimorphisms.  

%\subsection{Pointed stable double Segal spaces}

The other general input for algebraic $K$-theory that we consider is that of pointed stable double Segal spaces, which can be thought of as homotopical double categories, in the following sense.

Segal spaces were first defined by Rezk \cite{rezk2001model}, inspired by conditions first defined by Segal \cite{segal1968classifying}.  One can think of them as categories internal to spaces, so categories with a space of morphisms and a space of objects, but with composition only defined up to homotopy, and properties such as associativity and unitality only holding up to homotopy.  They are defined more precisely as simplicial spaces, or functors $X \colon \Deltaop \rightarrow \SSet$ such that certain maps
\[ X_n \rightarrow \underbrace{X_1 \times_{X_0} \cdots \times_{X_0} X_1}_n \]
are weak equivalences of simplicial sets for all $n \geq 2$.  

Generalizing this definition, a \emph{double Segal space} is a bisimplicial space $Y \colon \Deltaop \times \Deltaop \rightarrow \SSet$ that is a Segal space in each variable.  It can be thought of as modeling a double category internal to spaces, so that the objects, horizontal morphisms, vertical morphisms, and squares all form spaces, but composition is only defined up to homotopy in each direction.  A double Segal space is \emph{stable} if it satisfies a condition analogous to that for stable double categories: if its squares, or elements of the space $Y_{1,1}$ are essentially uniquely determined by their span or cospan.  Similarly, it is \emph{pointed} if there is an essentially unique object that is initial in the horizontal direction and terminal in the vertical direction.  

Pointed stable double Segal spaces were defined with the goal of establishing an equivalence, via the $S_\bullet$-construction, with reduced 2-Segal spaces.  The idea is that, just as a Segal space encodes the structure of a simplicial category up to homotopy, a 2-Segal space encodes a similar but weaker structure, where composition of morphisms need not always exist, or be unique, but is still associative, in an appropriate homotopical sense.  We explain this equivalence in more detail in Section \ref{sdotsection}.

From these definitions, it is not too hard to see the following result, using a suitable definition of the double nerve of a double category.

\begin{proposition}
	The double nerve of a reverse CGW category is a pointed stable double Segal space.
\end{proposition}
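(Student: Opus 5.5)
The plan is to verify the three defining properties in turn: Segal, pointed, and stable. First I fix the double nerve. For a double category $\D$, let $N\D_{m,n}$ be the nerve, in the $\Set$-direction $\Set$ of isomorphisms, of the category whose objects are $m\times n$ grids of composable squares and whose morphisms are invertible grid maps built from componentwise isomorphisms. Here I use the shared isomorphisms of a (reverse) CGW category, so horizontal and vertical isomorphisms agree. This gives a bisimplicial space $\Deltaop\times\Deltaop\to\SSet$, in the style of Rezk's classifying diagram.

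\textbf{Segal condition.} Fix the vertical degree $n$. The category of $m\times n$ grids is isomorphic to the iterated strict pullback of $m$ copies of the category of $1\times n$ grids over the category of $0\times n$ grids, because grids glue strictly along shared columns. The nerve preserves these pullbacks. So it remains to see that each strict pullback is a homotopy pullback. The face maps to the category of $0\times n$ columns are isofibrations: an isomorphism of a boundary column lifts to the whole strip by conjugating the squares, which is composition with the identity squares on isomorphisms. Strict pullbacks of groupoid nerves along isofibrations, which are Kan fibrations, are homotopy pullbacks. The same argument applies in the other variable.

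\textbf{Pointedness.} In a reverse CGW category the distinguished object $\ast$ is initial in the horizontal category and terminal in the vertical category. Passing to the nerve, I check that the relevant mapping spaces, defined as fibers of $N\D_{1,0}\to N\D_{0,0}\times N\D_{0,0}$ over $\ast$ (and the vertical analogue), are contractible. Each such fiber is the nerve of a groupoid that is equivalent to a point. Its objects are morphisms out of $\ast$ (or into $\ast$) with target (or source) a fixed object, up to isomorphism, and there is exactly one such morphism with no nontrivial automorphisms.

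\textbf{Stability.} I expect this step to be the main obstacle. The claim is that the maps from $N\D_{1,1}$ to the spaces of mixed spans and mixed cospans are weak equivalences. The span space should be modeled as a homotopy pullback $N\D_{1,0}\times^h_{N\D_{0,0}}N\D_{0,1}$, and by the isofibration argument above this is the nerve of the strict pullback groupoid. So it suffices to show that the functor from the groupoid of squares to the groupoid of mixed spans is an equivalence. Essential surjectivity and full faithfulness amount to saying that every mixed span extends to a distinguished square, uniquely up to unique isomorphism. For this I would invoke the reformulation, proved earlier in the paper, that reverse CGW categories satisfy the stability-like property \eqref{regstable}, which is equivalent to the complement axioms. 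The points needing care are, first, that ``unique isomorphism'' of squares means an isomorphism fixing the span, which is exactly faithfulness plus fullness on fibers. Second, the isomorphisms of the span act compatibly on the square's other corner, which uses functoriality of the chosen completion on isomorphisms. The cospan case is dual.
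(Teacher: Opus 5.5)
\textbf{Verdict: there is a gap, in the stability step.} Your Segal step is fine. The paper simply notes that $N^\square\D$ is the triple nerve of a triple category, hence strictly double Segal; your isofibration remark is a harmless refinement. Your pointedness step is essentially Proposition~\ref{pointedproof}, with one omission: the definition of pointed needs a preaugmentation $Y_{-1}$ that is contractible. You should name it (the nerve of the full subgroupoid of zero objects) and note that it is contractible because there is exactly one morphism between any two zero objects.

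\textbf{Where the argument breaks.} You claim that essential surjectivity and full faithfulness of the functor from squares to mixed spans ``amount to'' axiom (K''). They do not. Axiom (K'') only compares two squares with the \emph{same} span $C\xleftarrow{h}A\xrightarrow{f}B$. It gives a unique isomorphism between them that is the identity on $A$, $B$, $C$, so it yields full faithfulness only over identity morphisms of spans.

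Full faithfulness asks for more. Take squares $S$ (with $k\colon B\to D$, $g\colon C\to D$) and $S'$ (primed) and an \emph{arbitrary} isomorphism of spans $(\alpha,\beta,\gamma)$. You need a unique $\delta\colon D\cong D'$ extending it to an isomorphism $S\cong S'$. You attribute this to ``functoriality of the chosen completion on isomorphisms,'' but nothing provides that functoriality; establishing it is exactly the missing content. In a general 2-category, lifts unique up to unique isomorphism fixing the source do not make the restriction functor an equivalence. That is why the paper needs the retract argument of Lemma~\ref{strongto2} on top of Proposition~\ref{2tostrong}.

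\textbf{The missing idea: transport $S'$ back along $(\alpha,\beta,\gamma)$} using the squares supplied by shared isomorphisms.
\begin{enumerate}
\item Paste to the left of $S'$ the square with top $\alpha$, bottom $\gamma$ and sides $h,h'$. It exists because $\varphi(\gamma)h=h'\varphi(\alpha)$.
\item Paste above the result the square $T$ with top $f$, sides $\id_A$ and $\varphi(\beta)$, and bottom $\beta f=f'\alpha$.
\item The composite $S''$ has span exactly $(h,f)$, corner $D'$, right edge $k'\varphi(\beta)$ and bottom edge $g'\gamma$.
\item Since $T$ is invertible, pasting identifies isomorphisms $S\cong S''$ fixing $A,B,C$ with isomorphisms $S\cong S'$ over $(\alpha,\beta,\gamma)$. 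On faces, both amount to a $\delta\colon D\cong D'$ with $\varphi(\delta)k=k'\varphi(\beta)$ and $\delta g=g'\gamma$, and the square equations agree after cancelling $T$.
\item Axiom (K'') applied to $S$ and $S''$ now gives existence and uniqueness of $\delta$.
\end{enumerate}
This pasting is precisely the section $\ilift(\toprightcorner^\vop)\to[1]\otimes([1]\boxtimes[1])$ in the paper's proof of Proposition~\ref{stableproof}, which composes the three squares containing $B'$ into one. The cospan case is dual.
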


However, in general, pointed stable double Segal spaces are more general than reverse CGW categories.  Nonetheless, we can give an elegant characterization of those that arise from CGW categories using the theory of classifying diagrams.	

\subsection{Classifying diagrams}

Let $\mathcal C$ be a small category.  Rezk defined its \emph{classifying diagram} to be the simplicial space $N\mathcal C$ given by
\[ (N\mathcal C)_n = \nerve(\iso \mathcal C^{[n]}), \]
where $\mathcal C^{[n]}$ denotes the category of functors 
\[ [n] = \underbrace{\bullet \rightarrow \bullet \rightarrow \cdots \rightarrow \bullet}_n \rightarrow \mathcal C \]
and $\iso \mathcal C^{[n]}$ denotes the maximal subgroupoid of $\mathcal C^{[n]}$, consisting only of its isomorphisms.  We can thus think of the set $(N\mathcal C)_{n,m}$ as consisting of all grids of the form 
\[ \begin{tikzcd}
      \bullet \dar{\cong} \rar & \bullet \dar{\cong} \rar[phantom]{\cdots} \rar[phantom, shift left=3]{n} & \bullet \dar{\cong} \rar & \bullet \dar{\cong} \\
      \bullet \dar[phantom]{\vdots} \dar[phantom, shift right=3]{m} \rar & \bullet \dar[phantom]{\vdots} \dar[phantom, shift right=3]{} \rar[phantom]{\cdots} \rar[phantom, shift left=3]{} & \bullet \dar[phantom]{\vdots} \dar[phantom, shift left=3]{} \rar & \bullet \dar[phantom]{\vdots} \dar[phantom, shift left=3]{} \\
      \bullet \dar{\cong} \rar & \bullet \dar{\cong} \rar[phantom]{\cdots} \rar[phantom, shift right=3]{} & \bullet \dar{\cong} \rar & \bullet \dar{\cong} \\
      \bullet \rar & \bullet \rar[phantom]{\cdots} \rar[phantom, shift
      right=3]{} & \bullet \rar & \bullet.  \comm{1-1}{2-2}
      \comm{1-3}{2-4} \comm{3-1}{4-2} \comm{3-3}{4-4}
    \end{tikzcd} \]  

Rezk's motivation for defining classifying diagrams was to give a refinement of the nerve construction.  Recall that an equivalence of categories $\mathcal C \rightarrow \mathcal D$ induces a weak equivalence on nerves, but the converse statement is false; for example, the unique functor $[1] \rightarrow [0]$ is not an equivalence of categories, yet induces a map of contractible spaces on nerves.  However, it is true that $\mathcal C \rightarrow \mathcal D$ is an equivalence of categories if and only if $N\mathcal C \rightarrow N\mathcal D$ is a levelwise weak equivalence of simplicial sets \cite[Proposition 3.3.4]{jbbook}.  This construction is meant to provide a concrete example of the notion of complete Segal space, which is a model for $(\infty,1)$-categories.

In this paper, we develop a more general notion of classifying diagram of double categories, which can be defined for double categories with shared isomorphisms, which means that the categories of horizontal and vertical morphisms have the same subcategory of isomorphisms, in a suitably compatible way; see Definition~\ref{sharedisos}. 

We then show that pointed stable double Segal spaces that arise as classifying diagrams of double categories correspond to CGW categories. Our main results can be summarized as follows.  

\begin{theorem}
	\begin{itemize}
		\item If $\D$ is a CGW category, then its classifying diagram $N^\square\D$ is a pointed stable double Segal space.
	
		\item If $\D$ is a double category with shared isomorphisms in which all horizontal morphisms are monic and all vertical morphisms are epic, and if its classifying $N^\square\D$ is a pointed stable double Segal space, then $\D$ is a CGW category. 
	\end{itemize}
\end{theorem}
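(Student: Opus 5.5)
The plan is to make the classifying diagram concrete and then translate each axiom of a pointed stable double Segal space into a statement about the double category $\D$. I take $N^\square\D$ to be the bisimplicial space whose $(m,n)$-level is the nerve of the groupoid of $[m]\times[n]$-grids of squares in $\D$, with the shared isomorphisms as morphisms. Since this is levelwise the nerve of a groupoid, a map between such levels is a weak equivalence exactly when the underlying functor of groupoids is an equivalence. This reduces every homotopical condition to a categorical one: essential surjectivity together with full faithfulness on isomorphisms.

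For the Segal conditions, I would note that a grid of squares is determined by its constituent squares glued along shared boundary morphisms. I then compare the groupoid of grids with the iterated strict fibre product of groupoids. The shared-isomorphism hypothesis (Definition~\ref{sharedisos}) is what makes these strict fibre products homotopy fibre products: the maps to the groupoid of objects, or of morphisms, are isofibrations, because an isomorphism can be transported along the shared isomorphisms. This step is the same in both directions of the theorem and should hold for any double category with shared isomorphisms.

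For the first bullet, assume $\D$ is CGW and use the reverse/complement reformulation established earlier, namely stability in terms of mixed spans and cospans. Pointedness becomes the statement that the groupoid of maps out of $\ast$ horizontally, and into $\ast$ vertically, is contractible. This follows because $\ast$ is initial and terminal and its automorphisms are trivial, as is forced by the CGW axioms. Stability becomes the statement that the restriction functor from squares to mixed spans, and likewise to mixed cospans, is an equivalence of groupoids. Essential surjectivity is the existence of a square with a given span or cospan. Full faithfulness is the uniqueness of such a square up to unique isomorphism. Both are exactly the CGW axioms in their stability-like form.

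For the second bullet, I would run the same dictionary backwards. The equivalences of groupoids give existence of squares completing spans and cospans, together with uniqueness up to unique isomorphism. Pointedness gives the distinguished object $\ast$. Complements are built by completing the span $A\mrto B$, $A\erto\ast$ to a square and taking the resulting vertical morphism out of $B$; the other type of complement is built symmetrically. The main obstacle is recovering the functoriality of complements, meaning that squares induce isomorphisms on complements, and the remaining CGW axioms, such as pullback-like behaviour of squares. My approach would be to paste the relevant squares and invoke the uniqueness from stability. This uniqueness only determines squares up to isomorphism of the whole square, so the comparison isomorphisms could a priori change the boundary. The hypothesis that horizontal morphisms are monic and vertical morphisms are epic is exactly what should let me conclude that an isomorphism of squares fixing two sides is uniquely determined, and hence that squares are determined by their boundaries. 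I expect this rigidification to be the delicate step, and I would handle it first.
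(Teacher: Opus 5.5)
Your outline matches the paper's. You reduce to the axioms (I'), (M'), (Z'), (K'') of a reverse CGW category, note that $N^\square\D$ is strictly double Segal because grids are glued from squares, and read stability and pointedness as equivalences of groupoids.

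The gap is the claim that essential surjectivity and full faithfulness of the restriction functor $p$ (from squares to mixed spans, resp.\ cospans) are ``exactly'' existence and uniqueness up to unique isomorphism. Axiom (K'') says only that $p$ is surjective on objects and has contractible \emph{strict} fibres: squares with a fixed span, and isomorphisms between them restricting to the identity on it. Full faithfulness instead requires that an \emph{arbitrary} natural shared isomorphism between the spans of two squares lift uniquely. Likewise, essential surjectivity only produces a square whose span is isomorphic to the given one. For functors of groupoids these conditions genuinely differ: the functor from the terminal groupoid to the one-object groupoid with automorphism group $\mathbb{Z}/2$ is surjective on objects with contractible strict fibre, but is not full.

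This is precisely the step the paper singles out in Proposition~\ref{stableproof}. There, (K'') is strong locality and stability is strong 2-locality. Passing from the first to the second needs the retract of Lemma~\ref{strongto2}, obtained by composing the three squares containing $B'$ into one square. As the paper stresses, this uses the composition in $\D$ and the squares between shared isomorphisms, not any formal property of lifts unique up to isomorphism.

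What you are missing is that $p$ is an isofibration. Given a square $s$ and a natural shared isomorphism from its span to another span, paste $s$ with the squares between shared isomorphisms supplied by (I'). This gives a square on the new span and an isomorphism from $s$ lying over the given one, with identity component at the fourth corner. You already use this kind of transport for the Segal maps. Apply it also to $Y_{1,1}\to Y_{0,1}\times_{Y_{0,0}}Y_{1,0}$ and $Y_{1,1}\to Y_{1,0}\times_{Y_{0,0}}Y_{0,1}$. Then ``$p$ is an equivalence'' becomes equivalent to ``$p$ is surjective on objects with contractible strict fibres'', i.e.\ to (K''), in both directions.

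The same care is needed for pointedness. The condition is not that the groupoid of maps out of $\ast$ is contractible, but that the target functor from horizontal maps out of zero objects to all objects is an equivalence. For the second bullet you must still deduce from this, together with contractibility of $Y_{-1}$, that these objects are horizontally initial and vertically terminal, as in Proposition~\ref{pointedproof}.

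Finally, for the second bullet you need not construct complements and check their functoriality by hand. Once (I'), (M'), (Z'), (K'') hold, Proposition~\ref{cgwcomparison} applied to $\D^\vop$ gives the CGW structure. The rigidification you anticipate is just its faithfulness step, where the monic/epic hypothesis forces the remaining component of an isomorphism between two squares with the same boundary to be an identity.
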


In the course of the paper, we actually develop a categorical version of the classifying diagram that takes a category to a double category; applying the nerve functor in one direction recovers Rezk's original definition.  The generalization of this construction to double categories takes values in triple categories, and we first prove the above theorem in this context, and then take a suitable nerve functor in two of the directions to give the explicit connection with stable double Segal spaces.

%\subsection{Lifting properties}

A key feature of this paper is the use of lifting properties to describe various objects with additional structure.  A significant example is the description of classifying diagrams, both for categories and for double categories, in terms of lifting properties.  We also make use of some notions of lifting properties for 2-categories that might be of independent interest.	

\subsection{Outline of the paper}

In Section \ref{liftingsection}, we review the notion of lifting properties in the setting of categories and give some examples and essential results.  In Section \ref{classifyingdcsection}, we recall some basic definitions for double categories and introduce the classifying double category of an ordinary category.  The main result of this section is the characterization of such double categories in terms of lifting properties.  We then generalize this construction and recognition theorem to the classifying triple category of a double category in Section \ref{classifyingtcsection}.  In Section \ref{cgwsection}, we recall the definition of CGW categories, give an alternate formulation of them, and characterize them in terms of lifting properties.  We then translate these results to the simplicial context in Section \ref{simplicialsection} and prove our main comparison result in this setting in Section \ref{comparisonsection}, where we also introduce two notions of lifting properties in the setting of 2-categories.  Finally, we briefly discuss the $S_\bullet$-construction in light of this comparison.

\section{Lifting properties} \label{liftingsection}

%\jbnoteil{I wonder if there is a very familiar running example that we could include here to make this material more concrete.  The key might be finding something that also has a good example of a fully faithful functor as well.}
%
%\bsnoteil{I've added the example of every morphism being monic in a category, and the fully faithful functor from categories to simplicial sets. I figure this has a similar flavor to our main examples later on and ends up getting used for CGW categories. Likewise I added categories with pushouts as an example for lifting properties in 2-categories, which I think gives some intuition even though it doesn't quite meet the definition for reasons that actually motivate the use of double categories (happy to swap out this one).}

A standard and often useful method of describing properties of mathematical structures is through lifting properties.  This approach is often also described in terms of localization with respect to the set of maps with which we require the lifting properties.

In this section, we give a brief summary of these techniques and a few motivating examples.%, first in the context of ordinary categories, and then in the context of 2-categories.

%\subsection{Lifting properties in categories}

We begin with some terminology for an object in a category possessing a lifting property.  

\begin{definition}
  	Let $\C$ be a category with a terminal object, and let $S$ be a set of morphisms in $\C$.  An object $X$ is \emph{$S$-local} if for any $f \colon A \rto B$ in $S$ and any solid-arrow diagram
  	\[ \begin{tikzcd}
      	A \rar{} \dar[swap]{i} &X \\ 
      	B \ar[densely dashed]{ur}
    \end{tikzcd}\] 
    the dashed arrow exists.  We say that $X$ is \emph{strictly $S$-local} if this arrow is unique.  For a single morphism $f$, we say that $X$ is \emph{(strictly) $f$-local} if it is (strictly) $\{f\}$-local.
\end{definition}

\begin{remark}
	The definition of an $S$-local object is often instead given in terms of
        appropriate mapping objects, so that $X$ is strictly $S$-local precisely
        when, for any morphism $A \rightarrow B$ in $S$, the induced map
        $\Hom_\C(B,X) \to \Hom_\C(A,X)$ is an isomorphism.  More generally, if working in a simplicially enriched setting, for example in model categories, an object $X$ is defined to be $S$-local precisely when the induced map on mapping spaces is a weak equivalence of simplicial sets.
\end{remark}
	
This definition encodes the notion that locally $X$ behaves in a way compatible with each $f$, in the sense that objects related by $f$ cannot be distinguished by their mapping into $X$.  Throughout this paper, we give several examples of morphisms $f$ such that $X$ is $f$-local if and only if certain useful properties hold inside $X$, and we characterize several different types of objects in terms of being local with respect to a carefully chosen set of morphisms. 

\begin{example} \label{monolift}
	For a category $\C$, we claim that every morphism in $\C$ is monic if and only if $\C$ is local with respect to the identity-on-objects functor
	\[ \mono \colon \left(
	\begin{tikzcd}
		A \rar[shift left=1]{f} \rar[shift right=1,swap]{f'} \ar[shift right=1,bend right=35]{rr}[swap]{gf=gf'} & B \rar{g} & C
	\end{tikzcd}
	\right) \to \left(
	\begin{tikzcd}
		A \rar{f=f'} & B \rar{g} & C
	\end{tikzcd}
	\right). \]
	The domain category of $\mono$ precisely captures the property that the morphisms $f$ and $f'$ are coequalized by the morphism $g$.  If we choose any morphisms of $\C$ fitting into the configuration of this domain category, then the lifting property captures that the morphism in the image of $g$ must be monic.  Since any morphism of $\C$ can be obtained in this way, we can conclude that every morphism of $\C$ is a monomorphism.  We refer to such a category as a \emph{monomorphism category}.
	
	%For any morphism $g$ in $\C$, and any morphisms $f$ and $f'$ in $\C$ coequalized by $g$, in the sense of forming a functor from the domain category of $\mono$ into $\C$, $f$ and $f'$ must agree by the existence of a (necessarily unique) lift with respect to $\mono$. Hence $g$ is monic.
\end{example}

Our next example is in the context of simplicial sets; throughout this paper we frequently use their close relationship with categories.  Recall the simplicial indexing category $\Delta$ whose objects are the finite ordered sets $[n]=\{0 \leq 1 \leq \cdots \leq n\}$ for $n \geq 0$ and whose morphisms are the order-preserving functions between them.  A \emph{simplicial set} is a functor $\Deltaop \rightarrow \Set$, and we denote by $\SSet$ the category of simplicial sets and natural tranformations between them.

\begin{example} \label{nervelocal} 
	Given any $n \geq 0$, recall that the $n$-\emph{simplex} $\Delta[n]$ is the representable simplicial set, so $\Delta[n]_k = \Hom_\Delta([k],[n])$.  For any $n$, let $G[n]$ denote the \emph{spine} of $\Delta[n]$, given by the colimit of the diagram 
	\[ \Delta[1] \xleftarrow{d^0} \Delta[0] \xrightarrow{d^1} \cdots \leftarrow \Delta[0] \xrightarrow{d^1} \Delta[1] \] with $n$ copies of $\Delta[1]$.  A simplicial set is (strictly) local with respect to the inclusions 
	\[ \iota_n \colon G[n] \rightarrow \Delta[n] \]
	if and only if it can be obtained as the nerve of a category.  We elaborate further on this example in Section \ref{subsection.segalspaces}.
\end{example}

This type of characterization can be thought of as a recognition problem, and we throughout we state several results to the effect that an object in a category $\C'$ is in the image of some functor $F \colon \C \to \C'$ if and only if it is $S$-local, for some suitable set $S$ of morphisms in $\C'$.  

One useful approach to proving such statements is to factor the functor $F$ as the composite of two functors and recognize the images of the two factors separately. The following lemmas make this idea more precise.

\begin{lemma} \label{functorlift}
	If $F \colon \C \to \C'$ is a fully faithful functor, then an object $X$ in $\C$ is (strictly) $S$-local if and only if $FX$ is (strictly) $FS$-local in $\C'$.
\end{lemma}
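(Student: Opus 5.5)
Fully faithfulness of $F$ gives, for each pair of objects $Y,Z$ of $\C$, a bijection
\[ F_{Y,Z} \colon \Hom_\C(Y,Z) \xrightarrow{\ \cong\ } \Hom_{\C'}(FY,FZ), \]
and these bijections are compatible with composition, since $F$ is a functor. My plan is to use this bijection to move lifting problems back and forth, treating each direction of the equivalence in turn.

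First I fix the meaning of the statement: $S$ is a set of morphisms in $\C$, $FS=\{Ff : f\in S\}$, and we consider lifting problems in $\C'$ for maps $FA\to FX$ along $Ff\colon FA\to FB$. The key observation is the following. For $f\colon A\to B$ in $S$, precomposition with $f$ and with $Ff$ fits into a commutative square of sets
\[ \begin{tikzcd}
\Hom_\C(B,X) \rar{-\circ f} \dar[swap]{F_{B,X}} & \Hom_\C(A,X) \dar{F_{A,X}} \\
\Hom_{\C'}(FB,FX) \rar{-\circ Ff} & \Hom_{\C'}(FA,FX).
\end{tikzcd} \]
It commutes because $F(g\circ f)=Fg\circ Ff$. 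The vertical maps are bijections, so the top map is surjective (respectively bijective) if and only if the bottom map is.

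Next I unwind the definitions. $X$ is $f$-local exactly when the top map is surjective, and strictly $f$-local exactly when it is bijective. Likewise, $FX$ is (strictly) $Ff$-local exactly when the bottom map is surjective (bijective). Uniqueness of the lift corresponds to injectivity, which is the remark following the definition. Quantifying over all $f\in S$ then gives the lemma in both the plain and strict versions.

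There is no real obstacle here; the only point needing care is that every lifting problem in $\C'$ against $Ff$ with target $FX$ has source and target in the image of $F$. Fullness is what guarantees that each map $FA\to FX$ and each candidate lift $FB\to FX$ comes from $\C$, and faithfulness is what transports uniqueness. I would also note that the terminal-object hypothesis in the definition plays no role in this argument.
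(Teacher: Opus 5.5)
Your proof is correct and follows the paper's argument. The paper just notes that, by full faithfulness, every morphism $FA \to FX$ or $FB \to FX$ in $\C'$ is the image of a unique morphism in $\C$. Your commutative square of hom-sets, whose vertical maps are bijections, is an explicit version of that observation.
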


\begin{proof}
	Given a morphism $f \colon A \to B$ in $S$, every morphism in $\C'$ from either $FA$ or $FB$ to $FX$ is the image of a unique morphism from $\C$, since $F$ is fully faithful, from which the result follows.
\end{proof}

Moreover, this lemma tells us how to recognize the image of a composite of two functors when the second is fully faithful.  The following lemma can be proved similarly to the previous one. %The goal of this is to ultimately characterize the image of a composite functor $\C \xrightarrow{F} \C' \xrightarrow{G} \C''$ by identifying lifting conditions in $\C'$ for the image of $F$ and lifting conditions in $\C''$ for the image of $\C'$.

\begin{lemma} \label{compositelift}
	Given a pair of functors $\C \xrightarrow{F} \C' \xrightarrow{G} \C''$ such that
	\begin{itemize}
		\item $G$ is fully faithful, 
	
		\item an object in $\C'$ is in the essential image image of $F$ if and only if it is (strictly) $S$-local, and
	
		\item an object in $\C''$ is in the essential image of $G$ if and only if it is (strictly) $T$-local,
	\end{itemize}
	then an object in $\C''$ is in the image of $GF$ if and only if it is (strictly) $(FS \cup T)$-local.
\end{lemma}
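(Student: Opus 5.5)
Both directions come down to combining Lemma~\ref{functorlift}, applied to $G$, with the hypotheses on $F$ and $G$. One point of interpretation first: the conclusion says ``image of $GF$'', but the hypotheses only speak of essential images. So I will prove the statement for the essential image of $GF$. I will also assume the intended reading in which $S$, $T$ and the conclusion are all strict, or all non-strict. Here $FS$ means the set of morphisms $\{F(s) : s\in S\}$ in $\C'$, and $GFS\cup T$ is the corresponding set in $\C''$. The locality condition in the conclusion should be read in $\C''$, so the set there is really $GFS\cup T$, with $FS$ transported along $G$. This is just a matter of notation.

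For the forward direction, let $Z\in\C''$ be isomorphic to $GF(X)$ for some $X\in\C$.
\begin{itemize}
\item $GF(X)$ lies in the essential image of $G$, so $Z$ does too, and by the third hypothesis $Z$ is $T$-local.
\item Write $Z\cong G(Y)$ with $Y=F(X)$. Then $Y$ is in the essential image of $F$, so by the second hypothesis $Y$ is $S$-local.
\item Lemma~\ref{functorlift}, applied to the fully faithful $G$ and the set $FS$, shows that $G(Y)$ is $GFS$-local.
\item Locality (strict or not) is invariant under isomorphism of the target object: compose lifts with the isomorphism. Hence $Z$ is $GFS$-local.
\end{itemize}
Together these give that $Z$ is $(GFS\cup T)$-local.

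For the converse, suppose $Z$ is $(GFS\cup T)$-local.
\begin{itemize}
\item Since $Z$ is $T$-local, the third hypothesis gives $Z\cong G(Y)$ for some $Y\in\C'$.
\item By isomorphism invariance, $G(Y)$ is $GFS$-local.
\item Lemma~\ref{functorlift} for the fully faithful $G$ then shows that $Y$ is $FS$-local in $\C'$.
\item By the second hypothesis, $Y\cong F(X)$ for some $X$. Hence $Z\cong GF(X)$.
\end{itemize}

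The only real obstacle is bookkeeping. One needs the small observation that (strict) locality is preserved under isomorphism, which follows by conjugating lifting diagrams by the isomorphism. One also has to recognise that the second hypothesis is, in effect, about $FS$-locality in $\C'$ even though it is written as ``$S$-local''. Once those points are fixed, the proof is just the two chains above.
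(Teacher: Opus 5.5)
Your proof is correct and follows essentially the route the paper intends. The paper only says the lemma is proved ``similarly'' to Lemma~\ref{functorlift}: transfer locality along the fully faithful $G$, use that (strict) locality is invariant under isomorphism, and read ``image'' as essential image. Your notational repair is also right in substance: the conclusion should use $G$ applied to the set of morphisms of $\C'$ from the second hypothesis, i.e.\ $GS\cup T$ with $S$ a set of morphisms in $\C'$, as in Example~\ref{mononerve}, and your argument never uses that this set has the form $FS$.
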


\begin{example}\label{mononerve}
	The nerve functor taking categories to simplicial sets is fully faithful, and, as described in Example \ref{nervelocal}, a simplicial set is isomorphic to the nerve of a category when it is strictly local with respect to the set $\{\iota_n \colon G[n] \to \Delta[n]\}$ of spine inclusions into simplices.  Combining this result with the previous lemma shows that a simplicial set is isomorphic to the nerve of a monomorphism category precisely when it is strictly local with respect to the set $\{\iota_n \colon G[n] \to \Delta[n]\} \cup \{\nerve(\mono)\}$.
\end{example}

It is also helpful to note how local properties interact with adjunctions.

\begin{lemma} \label{adjunctionlift}
	Let $F \colon \C \to \C'$ be a functor with left adjoint $G \colon \C' \to \C$.  If $S$ is a set of morphisms in $\C'$ and $X$ is an object of $\C$, then the object $FX$ of $\C'$ is (strictly) $S$-local if and only if $X$ is (strictly) $GS$-local.
\end{lemma}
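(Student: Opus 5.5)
The plan is to reduce both the plain and the strict statement to the natural bijection of the adjunction. Let $f \colon A \to B$ be a morphism in $S$, and write $\varphi \colon \Hom_{\C}(GY, X) \cong \Hom_{\C'}(Y, FX)$ for the adjunction bijection. This bijection is natural in $Y \in \C'$.

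The key step is to apply naturality in $Y$ to the morphism $f$. This gives a commutative square
\[ \begin{tikzcd}
\Hom_{\C}(GB,X) \rar{\varphi} \dar[swap]{(Gf)^*} & \Hom_{\C'}(B,FX) \dar{f^*} \\
\Hom_{\C}(GA,X) \rar{\varphi} & \Hom_{\C'}(A,FX)
\end{tikzcd} \]
whose horizontal maps are bijections. That is, for $h \colon GB \to X$ we have $\varphi(h) \circ f = \varphi(h \circ Gf)$, which is exactly the naturality of the adjunction.

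From this square, the right-hand vertical map $f^*$ is surjective (respectively bijective) if and only if the left-hand vertical map $(Gf)^*$ is surjective (respectively bijective). These two conditions are precisely the statements we need:
\begin{itemize}
\item surjectivity of $f^*$ says every map $A \to FX$ extends along $f$, i.e.\ $FX$ is $f$-local, and bijectivity says this extension is unique, i.e.\ $FX$ is strictly $f$-local;
\item likewise, surjectivity (respectively bijectivity) of $(Gf)^*$ says $X$ is $Gf$-local (respectively strictly $Gf$-local).
\end{itemize}

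To finish, we range over all $f \in S$. Since $GS = \{Gf : f \in S\}$, being (strictly) $S$-local for $FX$ is equivalent to being (strictly) $GS$-local for $X$. Concretely, a lift $\tilde g \colon B \to FX$ of $g \colon A \to FX$ corresponds under $\varphi^{-1}$ to a lift of $\varphi^{-1}(g)$ along $Gf$, and conversely. Uniqueness transfers in both directions because $\varphi$ is a bijection.

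There is no real obstacle here. The only point needing care is getting the direction of naturality right: $G$ is the left adjoint, so it is applied to the morphisms of $S$, which sit in $\C'$. The terminal-object hypothesis in the definition of locality plays no role.
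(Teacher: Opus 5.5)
Your proposal is correct and follows the same route as the paper. The paper's proof simply notes that maps $A \to FX$ and $B \to FX$ correspond under the adjunction to maps $GA \to X$ and $GB \to X$. Your naturality square makes explicit the compatibility of this correspondence with precomposition by $f$ and $Gf$, which is exactly what is needed for existence and uniqueness of lifts to transfer.
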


\begin{proof}
	Given a morphism $i \colon A \to B$ in $S$, observe that morphisms $A \to FX$ and $B \to FX$ correspond to morphisms $GA \to X$ and $GB \to X$ via the adjunction.  The result follows.
\end{proof}

\begin{example}\label{monoadj}
	The nerve functor from categories to simplicial sets has a left adjoint $\fund$ called the fundamental category functor. In the context of Example~\ref{monolift} then, Lemma~\ref{adjunctionlift} says that a nerve $\nerve(\C)$ is $\nerve(\mono)$-local if and only if $\C$ is $\fund(\nerve(\mono))$-local. As the fundamental category functor is a left-inverse left adjoint, meaning a category is isomorphic to the fundamental category of its nerve, this further shows that a category is a monomorphism category (local wih respect to $\fund(\nerve(\mono))=\mono$) if and only if its nerve is local with respect to $\nerve(\mono)$. 
	%Let us consider a simplicial set $K$ that is (strictly) local with respect to only the morphism $\nerve(\mono)$.  The left adjoint to the nerve functor, which sends a simplicial set to a category, takes $K$ to a monomorphism category.  
\end{example}

Finally, we discuss how lifting properties with respect to distinct morphisms can be related to one another. These results are well known from the setting of model categories; see for instance \cite[Lemmas 7.2.11(1) and 7.2.8(1)]{hirschhorn}.

\begin{lemma}\label{pushoutlift}
	Let $i' \colon A' \to B'$ and $i \colon A \to B$ be morphisms in $\C$ related by a pushout square
	\[ \begin{tikzcd}
      	A \rar \dar[swap]{i} \ar[phantom]{dr}[pos=.9]{\ulcorner} & A' \dar[swap]{i'}\\ 
      	B \rar & B'.
      \end{tikzcd}\] 
	If an object $X$ of $\C$ is (strictly) $i$-local, then it is also (strictly) $i'$-local.  	
\end{lemma}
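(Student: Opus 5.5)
The plan is to use the universal property of the pushout directly, treating existence and uniqueness separately. Write $j \colon A \to A'$ for the top map and $k \colon B \to B'$ for the bottom map, so that $i' j = k i$ and the square is a pushout.

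\emph{Existence.} Suppose $X$ is $i$-local and let $g \colon A' \to X$ be given. Composing gives $g j \colon A \to X$. Since $X$ is $i$-local, there is $h \colon B \to X$ with $h i = g j$. The two maps $g \colon A' \to X$ and $h \colon B \to X$ then agree on $A$, so the universal property of the pushout yields $\ell \colon B' \to X$ with $\ell i' = g$ and $\ell k = h$. The equation $\ell i' = g$ is exactly the lift required for $i'$-locality.

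\emph{Uniqueness.} Suppose now that $X$ is strictly $i$-local, and let $\ell, \ell' \colon B' \to X$ both satisfy $\ell i' = \ell' i' = g$. Then
\[ (\ell k) i = \ell i' j = g j = \ell' i' j = (\ell' k) i, \]
so $\ell k$ and $\ell' k$ are both lifts of $g j$ along $i$. By strict $i$-locality, $\ell k = \ell' k$. Thus $\ell$ and $\ell'$ agree after precomposition with both $i'$ and $k$. The uniqueness clause of the pushout's universal property then gives $\ell = \ell'$.

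There is no real obstacle. The only point needing care is the uniqueness step: one must use both legs of the pushout, and the agreement on the $k$-leg is where the strict hypothesis on $i$ enters. Equivalently, in the language of the preceding remark, $\Hom_\C(-,X)$ sends the pushout to a pullback of sets, and pulling back a surjection (respectively a bijection) $\Hom_\C(B,X) \to \Hom_\C(A,X)$ along $\Hom_\C(A',X) \to \Hom_\C(A,X)$ gives a surjection (respectively a bijection) $\Hom_\C(B',X) \to \Hom_\C(A',X)$.
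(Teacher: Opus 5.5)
Your proof is correct and takes the same route as the paper. You lift $gj$ along $i$, glue via the pushout's universal property, and get uniqueness from strict $i$-locality on the $k$-leg combined with pushout uniqueness; your explicit check that any lift $\ell$ restricts along $k$ to a lift of $gj$ along $i$ spells out what the paper compresses into ``unique relative to the lift $B \to X$.''
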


\begin{proof}
	Suppose that $X$ is (strictly) $i$-local.  In any diagram of the form 
	\[ \begin{tikzcd}
      	A \rar \dar[swap]{i} \ar[phantom]{dr}[pos=.9]{\ulcorner} & A' \rar{} \dar[swap]{i'} & X \\ 
      	B \rar & B' \ar[densely dashed]{ur} 
      \end{tikzcd}\] 
	precomposing the morphism $A' \to X$ with $A \to X$ induces a (unique) lift to a morphism $B \to X$ making that portion of the diagram commute.  Since $B'$ is a pushout, there is an induced morphism $B' \to X$ making the right triangular diagram commute; hence $X$ is $i'$-local.  Moreover, by the universal property of pushouts, this map is unique relative to the lift $B \to X$, so if $X$ is strictly $i$-local it is also strictly $i'$-local.
\end{proof}

\begin{lemma} \label{retractlift}
	If a morphism $i' \colon A' \rightarrow B'$ is a retract of $i \colon A \rightarrow B$ in $\C$, then a (strictly) $i$-local object is also (strictly) $i'$-local.	
%	Given a pair of morphisms $i' \colon A' \to B'$ and $i \colon A \to B$ in $\C$ related by commutative squares as below
%	\[ \begin{tikzcd}
%      	A' \rar \dar[swap]{i'} & A \rar \dar[swap]{i} & A' \dar[swap]{i'} \\ 
%      	B' \rar & B \rar & B'
%      \end{tikzcd}\] 
%	whose horizontal morphisms compose to identities on $A'$ and $B'$, if an object $X$ is (strictly) $i$-local then it is also (strictly) $i'$-local.  	
\end{lemma}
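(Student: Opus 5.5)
The plan is to write out what it means for $i'$ to be a retract of $i$ and then check the (unique) lifting property directly by precomposing and postcomposing with the retraction maps. Concretely, there are morphisms $s \colon A' \to A$, $r \colon A \to A'$, $t \colon B' \to B$, $q \colon B \to B'$ with
\[ rs = \id_{A'}, \qquad qt = \id_{B'}, \qquad i s = t i', \qquad i' r = q i. \]
Let $X$ be (strictly) $i$-local. Suppose we are given a morphism $a \colon A' \to X$; we need to produce a morphism $b \colon B' \to X$ with $b i' = a$.

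For existence, consider the composite $a r \colon A \to X$. Since $X$ is $i$-local, there is a lift $\ell \colon B \to X$ with $\ell i = a r$. Set $b = \ell t \colon B' \to X$. Then
\[ b i' = \ell t i' = \ell i s = a r s = a, \]
so $b$ is the required lift, and $X$ is $i'$-local.

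For uniqueness in the strict case, suppose $b_1, b_2 \colon B' \to X$ both satisfy $b_k i' = a$. Consider $b_k q \colon B \to X$. Then
\[ b_k q i = b_k i' r = a r \]
for $k = 1, 2$, so both $b_1 q$ and $b_2 q$ are lifts of $a r$ along $i$. Strict $i$-locality then gives $b_1 q = b_2 q$. Precomposing with $t$ and using $qt = \id_{B'}$ yields $b_1 = b_2$.

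No step here is a real obstacle; the only care needed is to use the correct commuting square in each half. Existence uses the section side of the retraction ($s$, $t$ and the square $is = ti'$), while uniqueness uses the retraction side ($r$, $q$ and the square $i'r = qi$).
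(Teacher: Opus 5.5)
Your proof is correct and is the same argument as the paper's: precompose $a$ with the retraction $r \colon A \to A'$, lift along $i$, then precompose the lift with $t \colon B' \to B$, using $rs = \id_{A'}$ and $is = ti'$. The paper's printed proof spells out only existence, and your uniqueness argument (composing two lifts with $q \colon B \to B'$ to get two lifts of $ar$ along $i$, then cancelling via $qt = \id_{B'}$) correctly supplies the step it leaves implicit.
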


\begin{proof}
	Consider a diagram
	\[ \begin{tikzcd}
      	A' \rar \dar[swap]{i'} & A \rar \dar[swap]{i} & A' \rar{} \dar[swap]{i'} & X \\ 
      	B' \rar & B \rar & B', \ar[densely dashed]{ur}
      \end{tikzcd}\] 
	where the left-hand rectangular diagram presents the retraction.  A similar argument as for the previous lemma, using the fact that the lower horizontal maps compose to the identity, establishes the existence of the dotted-arrow lift.
	%a morphism $A' \to X$ composes into a morphism $A \to X$ which induces a (unique) lift to a morphism $B \to X$ which composes into a morphism $B' \to X$. As any other lift $B' \to X$ of the map $A' \to X$ would compose to a lift $B \to X$ of the composite $A \to A' \to X$, this lift is unique when $X$ is strictly $i$-local.
\end{proof}

\section{The classifying double category of a category} \label{classifyingdcsection}

Our goal in this section is to define and characterize classifying double categories using lifting properties.  Before doing so, we recall the definition and several examples of double categories, as well as some properties they can possess.

\subsection{Double categories}

Let us begin by recalling their definition of a double category.

\begin{definition} 
	A \emph{double category} $\D$ is a category internal to categories; see for instance \cite[\S 2]{fiorepaolipronk}.  As such, it consists of a pair of categories with the same objects, whose morphisms are called \emph{horizontal morphisms} and \emph{vertical morphisms}, respectively, and equipped with \emph{squares} of the form
	\[ \distsquare{A}{B}{C}{D}{}{}{}{} \] 
	between horizontal and vertical morphisms.  The squares have identities and composites in both the horizontal and vertical directions, subject to unit, associativity, and interchange conditions.

A double functor between double categories is a mapping on objects, both types of morphisms, and squares preserving all sources, targets, identities, and composites.
\end{definition}

If $\D$ is a double category, we denote by:
\begin{itemize}
	\item $\mathcal H(\D)$ the category of objects and horizontal morphisms,
	
	\item $\mathcal V(\D)$ the category of objects and vertical morphisms,
	
	\item $\mathcal H(\D_v)$ the category whose objects are vertical morphisms and whose morphisms are squares, and 
	%\bsnoteil{These are not standard, I made them up in vague reference to the double nerve. I also now realize that in section 5 we use Inna's notation where $\D_h$ is called $\M$ and $\D_v^1$ is called $\Ar_\square \M$, so we should probably make it consistent. In principle I prefer everything to have a $\D$ in it, but if we are going to use that notation in the CGW section (which is convenient to compare with $\Ar_\triangle$ and consistent with Inna's CGW paper) then we could consider using it here too I guess. It might even simplify the triple category notation where we could write something like $\M \boxtimes \E$ for the horizontal-vertical double category of a triple category (or use different letters for these categories less tied to monos and epis). Open to suggestions though, neither feels optimal.}
	
	\item $\mathcal V(\D_h)$ the category whose objects are horizontal morphisms and whose morphisms are squares.
\end{itemize}
A square in $\D$ is called \emph{trivial} if it is a horizontal or vertical identity; in other words, an identity in either the category $\mathcal H(\D_v)$ or the category $\mathcal V(\D_h)$.

%When it can be inferred from context, we do not mark squares from $\D$ with $\square$; when a square is drawn with mixed vertical and horizontal morphisms it is assumed to be a square from $\D$, unless otherwise specified. \jbnoteil{Don't we always indicate the squares?  This comment can be removed if we don't actually ever omit the notation.}

\begin{example} \label{boxtimesdefn}
	Generalizing the categories $[n]$ of strings of $n$ composable morphisms, we can define double categories $[m] \boxtimes [n]$ given by $(m \times n)$-grids of squares:
  	\[ \begin{tikzcd}
      \bullet \dar[etail] \rar[tail] & \bullet \dar[etail] \rar[phantom]{\cdots} \rar[phantom, shift left=3]{m} & \bullet \dar[etail] \rar[tail] & \bullet \dar[etail] \\
      \bullet \dar[phantom]{\vdots} \dar[phantom, shift right=3]{n} \rar[tail] & \bullet \dar[phantom]{\vdots} \dar[phantom, shift right=3]{} \rar[phantom]{\cdots} \rar[phantom, shift left=3]{} & \bullet \dar[phantom]{\vdots} \dar[phantom, shift left=3]{} \rar[tail] & \bullet \dar[phantom]{\vdots} \dar[phantom, shift left=3]{} \\
      \bullet \dar[etail] \rar[tail] & \bullet \dar[etail] \rar[phantom]{\cdots} \rar[phantom, shift right=3]{} & \bullet \dar[etail] \rar[tail] & \bullet \dar[etail] \\
      \bullet \rar[tail] & \bullet \rar[phantom]{\cdots} \rar[phantom, shift
      right=3]{} & \bullet \rar[tail] & \bullet.  \dist{1-1}{2-2}
      \dist{1-3}{2-4} \dist{3-1}{4-2} \dist{3-3}{4-4}
    \end{tikzcd} \]
  	The idea is that $[m] \boxtimes [n]$ is the double category freely generated by such a diagram, in the sense that we assume the existence of all composites of the various morphisms and squares; see \cite[1.34]{BOORSobj} for more details.
\end{example}

\begin{example}
  	For a double category $\D$ there is a double category $\D^t$, the \emph{transpose of $\D$}, that has the same objects and squares but with the roles of horizontal and vertical morphisms reversed.  Thus, for example, $([m] \boxtimes [n])^t = [n] \boxtimes [m]$.
\end{example}

\begin{example}
	For a double category $\D$ there are double categories $\D^\hop$, in which the direction of the horizontal morphisms is reversed, and $\D^\vop$, in which the direction of the vertical morphisms is reversed; in each case the squares are redirected accordingly. It is straightforward to check that there is also a double category $(\D^\vop)^\hop \cong (\D^\hop)^\vop$ in which both horizontal and vertical morphisms and squares are reversed.
\end{example}

\begin{example}\label{semidirect}
	Suppose that $\C$ and $\C'$ are categories with the same set of objects, and let $F \colon \C' \rto \C$ be a functor that is the identity on objects.  We can define a double category $\C\rtimes_F \C'$ with:
  	\begin{itemize}
  		\item the objects given by the objects of $\C$,
  		
  		\item the horizontal morphisms given by morphisms in $\C$,
  		
  		\item vertical morphisms given by morphisms in $\C'$, and
  		
  		\item squares given by diagrams
    	\[ \distsquare{A}{B}{C}{D}{f}{g}{h}{j} \]
   	 	such that the square 
    	\[ \begin{tikzcd}
        	A \rar{f} \dar[swap]{F(g)} & B \dar{F(h)} \\ C \rar{j} & D
      	\end{tikzcd}\]
    	commutes in $\C$.
  	\end{itemize}
	The special case when $\C'=\C$ and $F=\id_C$ is the double category in which both horizontal and vertical morphisms are the morphisms of $\C$ and the squares are commutative squares in $\C$.

	More generally, given two categories $\C$ and $\C'$ with the same objects, and two functors $F \colon \C \rto \mathcal{D}$ and $F' \colon \C' \rto \mathcal{D}$ that agree on objects, we can construct the double category $\C \mathrel{{}_{F}\!\bowtie_{F'}} \C'$ with objects, horizontal morphisms, and vertical morphisms as before, but now squares given by the diagrams
    \[ \distsquare{A}{B}{C}{D}{f}{g}{h}{j} \]
    such that the square
    \[\begin{tikzcd}
    F(A) \rar{F(f)} \dar{F'(g)} & F(B) \dar{F'(h)} \\ F(C) \rar{F(j)} & F(D)
    \end{tikzcd}\] 
    commutes in $\mathcal{D}$.
\end{example}

An important example of this construction is the following, which is a double categorical variant of the classifying diagram construction of Rezk, as described in Definition~\ref{classifyingdef}. 

\begin{definition}
	Given a category $\C$, its \emph{classifying double category} is $\C \rtimes_I \iso \C$, where $\iso \C$ is the maximal subgroupoid of $\C$ and $I \colon \iso \C \rightarrow \C$ is the natural inclusion.  
\end{definition}

In all of the above examples of double categories, the squares carry no information beyond commutativity of a square in an ambient category.  More generally, when there need not be such an ambient category, we can encode a similar property via the following definition.

\begin{definition}
  	A double category $\D$ is \emph{simple} if any square is uniquely determined by its boundary.  
\end{definition}

Simple double categories are useful for encoding compatibility conditions, but there are many examples of double categories that are not simple.  For example, double categories can be regarded as a generalization of $2$-categories, as follows.

\begin{example} \label{ex:2catdouble}
  	Given a $2$-category $\mathbf{C}$, we can construct a double category $\H(\mathbf{C})$ by defining:
  	\begin{itemize}
  		\item objects to be the objects of $\mathbf{C}$,
  		
  		\item horizontal morphisms to be the morphisms of $\mathbf{C}$,
  		
  		\item vertical morphisms to be identity morphisms, and
  		
  		\item squares to be $2$-morphisms in $\mathbf{C}$ from the top horizontal morphism of the square to the bottom one.
  	\end{itemize} 
	There is also a double category $\V(\mathbf{C})$ with the roles of horizontal and vertical morphisms are reversed, so we can define it as $\H(\mathbf{C})^t$. The squares in the double categories $\H(\mathbf{C})$ and $\V(\mathbf{C})$ have the property that one pair of parallel morphisms are identities.

We also write $\H(\C)$ and $\V(\C)$ when $\C$ is a 1-category, applying the above definition by regarding $\C$ as a 2-category with only identity 2-cells. All of the squares in $\H(\C)$ are identities on horizontal morphisms, while all of the squares in $\V(\C)$ are identities on vertical morphisms.

	%  	Conversely, a double category in which all vertical morphisms are identities gives a $2$-category via the reverse of this construction.
	%  	More generally, given any double category $\D$ it is possible to construct a $2$-category by setting
	%  	\begin{description}
	%  		\item[objects] the objects of $\D$,
	%  		
	%  		\item[$1$-morphisms] the horizontal morphisms of $\D$, and
	%  
	%  		\item[$2$-morphisms] a $2$-morphism from $f \colon A \rto B$ 		to $g \colon A \rto B$ is a
	%    		square
	%   			\[\distsquare{A}{B}{A}{B}{f}{1_A}{1_B}{g}.\]
	%  	\end{description}
\end{example}

\begin{definition}
	A square in a double category is \emph{horizontally globular} if both of its vertical morphisms are identities, as in the square:
	\[ \distsquare{A}{B}{A}{B}{f}{1_A}{1_B}{g}.\]
	Likewise, a square is \emph{vertically globular} if both of its horizontal morphisms are identities.
\end{definition}

\subsection{Recognizing classifying double categories}

Let us now focus on double categories of the form $\C \rtimes_I \iso \C$, as described in Example \ref{semidirect}, in which the squares are given by commutative squares in a category $\C$ in which the vertical morphisms are isomorphisms.  We now proceed to characterize such double categories using lifting properties.

\begin{definition} \label{ex:rho}
  	Let $\D$ be the double category $[1] \boxtimes [1]$.  Let $\D'$ be the double category with the same objects and morphisms, but with two nontrivial squares instead of one.  There is a double functor 
	\[ \flatten \colon \D' \rto \D \]
	given by the identity on objects, horizontal morphisms, and vertical morphisms.
\end{definition}

We claim that a double category is $\flatten$-local if and only if it is simple. A functor $\D' \rto \mathbb{C}$ is a choice of two squares that agree on the boundary, so it factors through $\flatten$ exactly when these two squares are identical.  Such a factorization exists for all squares in $\mathbb{C}$ precisely when $\mathbb{C}$ is simple. Moreover, since $\flatten$ is an epimorphism, a double category is $\flatten$-local if and only if it is strictly $\flatten$-local.

\begin{definition}
  	Let $\D$ be the double category that we might call the ``walking horizontal globular square'', given by the diagram
	\[ \begin{tikzcd}
      0 \rar[tail]{\alpha} \dar[etail,swap]{=} \ar[phantom]{dr}{\square} & 1 \dar[etail]{=} \\ 0 \rar[tail,swap]{\beta} & 1 
	\end{tikzcd} \]
	corresponding to the 2-category with a single 2-cell as described in Example~\ref{ex:2catdouble}.
	%obtained from the $2$-category with two objects $0$ and $1$, two morphisms $\alpha, \beta \colon 0 \rto 1$, and a $2$-morphism $\alpha \Rightarrow \beta$,  
	Considering $[1]$ as a $2$-category with only an identity 2-morphism, there is a double functor 
	\[ \truncateh \colon \D \rto \H([1]) \]
	given by the identity on objects.  
\end{definition}

Then a double category is $\truncateh$-local if and only if every horizontal globular square is the identity square for a horizontal morphism, in which case we say the double category is \emph{antiglobular}.

We can similarly define a double functor $\truncatev \colon \D^t \rto \V([1])$ such that a double category is $\truncatev$-local when every vertical globular square is the identity square for a vertical morphism.  We say that a double category is \emph{totally antiglobular} if it is $\{\truncateh,\truncatev\}$-local, so that it has no non-identity globular squares in either direction.

We now define two more particular morphisms.

\begin{definition}
  	Let $\D$ be the double category given by the diagram 
  	\[ \begin{tikzcd}
      0 \rar[tail] \dar[etail] \ar[phantom]{dr}{\square} & 1 \dar[etail]{=} \\ 1 \rar[tail]{=} & 1, 
    \end{tikzcd} \]
  	and let 
	\[ \rotater \colon \V([1]) \rto \D \] 
	be the functor induced by the identity on objects, which can be thought of as mapping the square of $\V([1])$ to the identity square on the left-hand vertical map of $\D$.

	Symmetrically, let $\D'$ be the double category given by the diagram 
  	\[ \begin{tikzcd}
      	0 \rar[tail]{=} \dar[etail,swap]{=} \ar[phantom]{dr}{\square} & 0 \dar[etail] \\ 0 \rar[tail] & 1,
    \end{tikzcd}\]
  	and let 
	\[\rotatel \colon \V([1]) \rto \D' \]
	be the functor induced by the identity on objects, now mapping the square to the identity square on the right-hand vertical map of $\D$. 
\end{definition}

Being local with respect to $\rotater$ enables one to migrate a vertical morphism to a horizontal morphism; being local with respect to $\rotatel$ is similar but puts the vertical morphism in a different position in the square. A double category that is local with respect to both $\rotater$ and $\rotatel$ such that the two squares associated to a vertical morphism $f$ compose to identity squares in both directions is sometimes called \emph{companionable}, where the single horizontal morphism corresponding to $f$ is its \emph{companion}. For our purposes, however, it suffices to be local only with respect to one or the other. 
%\jbnoteil{Trying to explain this intuitively - can probably be done much better.  Again, it would also be nice to have a concise name for this property - or maybe for when it is combined with $\truncateh$-local, as below.}\bsnoteil{Added a little here. I'm not sure how serious this ``companionable'' terminology is (I only saw it on nlab) but it seemed relevant to mention since `'all vertical morphisms having companions'' is a fairly common property.}

\begin{definition}
  	Let $\D$ be a double category that is strictly $\{\rotatel, \truncateh\}$-local.  The \emph{rotation functor of $\D$}, denoted by $R_\D \colon \D_v \rto \D_h$, is defined as follows.  On objects, $R_\D$ is the identity.  On a morphism $f \colon A \erto B$, we define $R_\D(f)$ to be the unique morphism $A \mrto B$ such that
  	\[\distsquare{A}{B}{B}{B}{R_\D(f)}{f}{=}{=}\]
  	is a square.  
\end{definition}

An analogous definition can be made for a double category that is strictly $\{\rotater,\truncateh\}$-local.  When $\D$ is understood from context we omit it from the notation.

\begin{lemma}
  	The rotation function $R_\D$ is well-defined.  Moreover, if $\D$ is both $\rotatel$-local and $\rotater$-local, then the two definitions of $R_\D$ agree.
\end{lemma}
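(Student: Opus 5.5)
Well-definedness of $R_\D$ on morphisms comes from strict $\rotatel$-locality. Fix a vertical morphism $f \colon A \erto B$. It is a double functor $\V([1]) \to \D$ sending the square to the vertical identity square on $f$. Strict $\rotatel$-locality gives a unique extension along $\rotatel$ to the double category $\D'$. Unwinding the definition, this extension consists of a horizontal morphism $A \mrto B$ (the image of the bottom edge $0 \mrto 1$) together with a square whose left vertical edge is $f$, whose top edge is $1_A$, whose right vertical edge is $1_B$, and whose bottom edge is that horizontal morphism. Existence gives such a horizontal morphism, and uniqueness of the extension gives uniqueness of the pair (morphism, square).

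Note that strict $\rotatel$-locality alone already pins down the morphism: two such horizontal morphisms with squares would give two extensions, which must coincide. So $R_\D(f)$ is uniquely defined. The roles of $\truncateh$-locality are to make the square itself unique (any two squares with the same boundary here are forced to agree via composition with globular squares), and to prove functoriality and agreement below. I would also check that the displayed shape in the definition of $R_\D$ matches the $\rotatel$ picture up to orientation, and fix the convention accordingly.

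Next, $R_\D$ is a functor. For identities, the identity square on $1_A$ witnesses $R(1_A)=1_A$ by uniqueness. For composites $f\colon A\erto B$, $g\colon B\erto C$, I paste the defining square for $f$, horizontally composed with the vertical identity square on $1_B$ (adjusted as needed), and vertically composed with the defining square for $g$. This produces a square with vertical edge $gf$, trivial edges elsewhere, and horizontal edge $R(g)R(f)$. Uniqueness then gives $R(gf)=R(g)R(f)$. The bookkeeping of which identity squares to whisker with is the only care needed here.

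Agreement of the two definitions is the main obstacle. Let $\sigma$ be the $\rotatel$-square for $f$ with horizontal edge $u$, and $\tau$ the $\rotater$-square for $f$ with horizontal edge $v$. I would compose them horizontally, in the appropriate order, so that both vertical edges become identities. This yields a horizontally globular square between $u$ and $v$ (or between identities and a composite involving them). By antiglobularity ($\truncateh$-locality), this globular square must be an identity square, which forces $u=v$.

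The delicate point is arranging the pasting correctly: the $\rotatel$-square has $f$ on the left and the $\rotater$-square has $f$ on the right, so their horizontal composite along $f$ has left edge $1_A$ and right edge $1_B$. Its top edge is $1_A$ followed by the top of $\tau$, and its bottom edge is the bottom of $\sigma$ followed by $1_B$. Matching the boundary conventions of the two squares precisely, and using strictness only where it is available, is where I expect the real work to be.
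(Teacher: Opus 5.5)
Your proposal is correct and follows the paper's proof. Functoriality comes from pasting the defining squares with identity squares and invoking uniqueness, and agreement comes from horizontally composing the two rotation squares along $f$ to get a horizontally globular square, which $\truncateh$-locality forces to be an identity, so the two horizontal edges coincide. The differences are minor: you get $R_\D(1_A)=1_A$ from uniqueness applied to the identity square on $A$ rather than from $\truncateh$-locality, which also works. Your bookkeeping needs fixing in three places. The pasting for $gf$ is the $2\times 2$ grid formed by the defining square of $f$, the vertical identity square on $R_\D(g)$, the horizontal identity square on $g$, and the defining square of $g$, not a vertical identity square on $1_B$. The square you write when unwinding the extension along $\rotatel$ is ill-typed (top $1_A$ but right edge $1_B$). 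Finally, strict locality alone already makes both the horizontal morphism and its square unique, so $\truncateh$ is not what makes the square unique.
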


\begin{proof}
  	Suppose that $\D$ is strictly $\{\rotater,\truncateh\}$-local.  If $f \colon A \erto B$ is any vertical morphism, we define $R_\D(f)$ to be the unique morphism such that
  	\[ \begin{tikzcd}
      	A \rar[tail]{R_\D(f)} \dar[etail] & B \dar[etail]{=} \\ B \rar[tail]{=} & B
    \end{tikzcd}\]
  	is a square.  To check that we obtain a functor, we must check that this definition respects composition and preserves identities.  First, suppose that $f = \id_A$.  Then the above square has the form
  	\[ \begin{tikzcd}
      	A \rar[tail]{R_\D(1_A)} \dar[etail]{=} & A \dar[etail]{=} \\ A \rar[tail]{=} & A.
    \end{tikzcd}\]
  	Since $\D$ is $\truncateh$-local, $R_\D(\id_A) = \id_A$, as desired.

  	Now suppose that we are given two morphisms $f \colon A \rto B$ and $g \colon B \rto C$.  Then there is a diagram of squares
  	\[ \begin{tikzcd}
      	A \rar[tail]{R_\D(f)} \dar[etail,swap]{f} & B \dar[etail]{=} \rar[tail]{R_\D(g)} & C \dar[etail]{=} \\
     	B \rar[tail]{=} \dar[etail,swap]{g}  & B \dar[etail]{g} \rar[tail]{R_\D(g)}&  C \dar[etail]{=}\\
      	C \rar[tail]{=} & C \rar[tail]{=} & C.
    \end{tikzcd}\] 
    The outside of the square has identities down the right side and across the bottom, and $gf$ across the left side.  Thus, since $\D$ is strictly $\rotater$-local, the morphism across the top must be $R_\D(gf)$; the fact that it is equal to $R_\D(g)R_\D(f)$ implies that $F$ is a functor, as desired.  The second case can be proved analogously.

  	Now suppose that $\D$ is both $\rotater$-local and $\rotatel$-local, and let $R^\vdash$ and $R^\dashv$ be the two functors defined by the structures.  To prove that they are equal, it suffices to check that they are equal on
  	morphisms.  Let $f \colon A \erto B$ be a morphism, and consider the outside square of the diagram
  	\[ \begin{tikzcd}
      	A \rar[equals] \dar[equals] & A \dar[etail]{f} \rar[tail]{R^\vdash(f)}  & B \dar[equals] \\
      	A \rar[tail]{R^\dashv(f)} & B \rar[equals] & B.
    \end{tikzcd}\]
  	Since $\D$ is $\truncateh$-local, the top and bottom of the outside square must be equal; thus $R^\vdash = R^\dashv$, as desired.
\end{proof}

\begin{definition}
  	A double category $\D$ is \emph{vertically flexible} if it is strictly $\{\rotater,\rotatel\}$-local. 
\end{definition}

\begin{proposition}
  	Suppose that $\D$ is simple, antiglobular, and vertically flexible. Then $R_\D$ induces an isomorphism of double categories $\iota \colon \D \rto \mathcal H(\D) \rtimes_R \mathcal V(\D)$.
\end{proposition}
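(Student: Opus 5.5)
The plan is to define $\iota$ as the identity on objects, horizontal morphisms, and vertical morphisms, and to send a square with boundary $(f,g,h,j)$ (top, left, right, bottom) to the square of $\mathcal H(\D) \rtimes_R \mathcal V(\D)$ with the same boundary. For this to make sense, every square of $\D$ must satisfy $j \circ R(g) = R(h) \circ f$ in $\mathcal H(\D)$. Since both double categories are simple ($\D$ by hypothesis, the target by construction), $\iota$ is then automatically injective on squares and compatible with identities and composition of squares. So the whole proof reduces to showing that, for a given boundary $(f,g,h,j)$,
\[ \text{a square with this boundary exists in } \D \iff j\,R(g) = R(h)\,f. \]

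For the forward direction, take a square $\sigma$ with boundary $(f,g,h,j)$. Paste it horizontally on the right with the defining square $\rho_h$ of $R(h)$ (top $R(h)$, left $h$, right and bottom identities). This gives a square with top $R(h)f$, left $g$, right $=$, bottom $j$. Separately, paste the left-hand $\rotatel$-type square for $g$ (top $=$, left $=$, right $g$, bottom $R(g)$; by the lemma this $R$ agrees with the $\rotater$ one) on the left of $\ldots$, or more simply compare with the composite of $\rho_g$ on top of nothing. The concrete plan is to build two squares with the same left/right/bottom boundary, left $g$, right identity, bottom $j$, and tops $R(h)f$ and $j R(g)$:
\begin{itemize}
\item[] the first is $\sigma$ pasted with $\rho_h$;
\item[] the second is $\rho_g$ pasted horizontally with the vertical identity square on $j$.
\end{itemize}
The second has top $jR(g)$, left $g$, right $=$, bottom $j$. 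Now I need uniqueness of the top given left, bottom, and a trivial right. Precomposing both with the $\rotatel$-square for $g$ on the left produces horizontally globular squares $A\to D$ with tops $R(h)f$ and $jR(g)$ and common bottom $jR(g)$. Antiglobularity then forces the tops to equal the bottoms, so $R(h)f = jR(g)$.

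For the converse, suppose $jR(g) = R(h)f$. Construct a square by pasting three pieces: the $\rotatel$-square for $g$, followed by the vertical identity square on $j$, and then a square exhibiting $R(h)$ going back to $h$. The last of these is obtained by rotating: paste the horizontal identity square of $f$ with the $\rotatel$-square for $h$ and the $\rho$-squares, so that identities cancel and the tops agree using the hypothesis. This yields a square with the desired boundary, with equalities of composite horizontal arrows supplied by the hypothesis and by functoriality of $R$. Functoriality on squares then follows from simplicity, and bijectivity from simplicity together with the equivalence above. Hence $\iota$ is an isomorphism.

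The main obstacle is the bookkeeping of the pasting diagrams in the two directions, in particular choosing which of the two rotation squares ($\rotatel$- or $\rotater$-type) to paste on each side so that identity edges line up and the result is genuinely globular. I would first try to settle this on the $2\times 2$ grid of squares, and only then check the converse construction, which is the more delicate of the two.
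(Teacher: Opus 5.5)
Your reduction matches the paper's proof. You take $\iota$ to be the identity on objects and morphisms; simplicity of both sides then makes it injective and functorial on squares. So everything comes down to showing that a square with boundary $(f,g,h,j)$ exists in $\D$ if and only if $jR(g)=R(h)f$. Your forward direction is also correct and is the paper's argument. Flanking $\sigma$ by the $\rotatel$-square of $g$ on the left and by $\rho_h$ on the right gives a horizontally globular square with top $R(h)f$ and bottom $jR(g)$, and antiglobularity gives equality. Your second comparison square is not needed for this direction.

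\textbf{Gap in the converse.} The gap is in the converse, which is what makes $\iota$ surjective on squares, and as written the pieces do not fit together.
\begin{itemize}
\item The $\rotatel$-square of $g$ (top and left $\id_A$, right $g$, bottom $R(g)$) has $g$ on its right edge. So it cannot supply the left edge $g$ of the target square. The piece needed there is $\rho_g$ (top $R(g)$, left $g$, right and bottom $\id_C$).
\item You never say how the pieces are arranged.
\item The appeals to functoriality of $R$ and to identities ``cancelling'' play no role.
\end{itemize}

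\textbf{The construction that works} is a $2\times 2$ grid.
\begin{itemize}
\item The top row is the identity square of $f$ (top and bottom $f$, vertical sides identities), followed horizontally by the $\rotatel$-square of $h$ (top and left $\id_B$, right $h$, bottom $R(h)$). Its outer boundary is top $f$, left $\id_A$, right $h$, bottom $R(h)f$.
\item The bottom row is $\rho_g$ followed by the identity square of $j$. This is exactly the second square from your forward argument. Its outer boundary is top $jR(g)$, left $g$, right $\id_D$, bottom $j$.
\end{itemize}
The hypothesis $R(h)f=jR(g)$ is precisely the condition for these two rows to be vertically composable. Their vertical composite has boundary $(f,g,h,j)$. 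With this replacement your proof coincides with the paper's.
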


\begin{proof}
  	We define $\iota$ to be the identity on objects and on morphisms.  To check that $\iota$ is well-defined it suffices to check that every square in $\D$ maps to a well-defined square in $\mathcal H(\D) \rtimes_R \mathcal V(\D)$.   Note that since both $\D$ and $\mathcal H(\D) \rtimes_R \mathcal V(\D)$ are simple, composition of squares is always compatible whenever it is defined.

  	Given a square
  	\[\distsquare{A}{B}{C}{D}{f}{g}{h}{j},\]
  	precomposing with the left square below, or postcomposing with the right square
  	\[ \begin{tikzcd}
      	A \rar[equals] \dar[equals] & A \dar[etail]{g} \\ A \rar[tail]{R(g)} & C
  	\end{tikzcd} \qquad
  	\begin{tikzcd}
   		B \rar[tail]{R(h)} \dar[etail]{h} & D \dar[equals] \\ D \rar[equals] & D 
  	\end{tikzcd} \]  
  	gives the square below left: 
  	\[ \begin{tikzcd}
    	A \dar[equals] \rar[tail]{R(h)f} & D \dar[equals] \\ A \rar[tail]{jR(g)}& D
  	\end{tikzcd}
  	\qquad\qquad\qquad
  	\begin{tikzcd}
    	A \dar[tail,swap]{R(g)} \rar[tail]{f} & B \dar[tail]{R(h)} \\ C \rar[tail]{j}& D.
    	\comm{1-1}{2-2}
  	\end{tikzcd} \]
 	Since $\D$ is antiglobular, the top and bottom arrows of this square are equal; thus $R(h)f = jR(g)$ and so we have a square in $\mathcal H(\D) \rtimes_R \mathcal V(\D)$ as above right. Therefore the functor $\iota$ is well-defined. 

	Now let $f \colon A \mrto B$, $j \colon C \mrto D$, $g \colon A \erto C$ and $h \colon B \erto D$ be morphisms such that $R(h)f = jR(g)$.  We would like to show that these morphisms assemble into a square in $\D$.  Such a square is exactly the vertical composite of the following two squares in $\D$, where by assumption the bottom arrow of the left composite and top arrow of the right composite agree:
  	\[ \begin{tikzcd}
      	A \rar[tail]{f} \dar[equals] & B \dar[equals] \rar[equals] & B \dar[etail]{h} \\
      	A \rar[tail]{f} & B \rar{R(h)} & D
    \end{tikzcd}\qquad\hbox{and}\qquad
    \begin{tikzcd}
      	A \rar[tail]{R(g)} \dar[etail,swap]{g} & C \rar[tail]{j} \dar[equals] & D \dar[equals] \\
      	C \rar[equals] & C \rar[tail]{j} & D.
    \end{tikzcd}\]
  	Thus $\iota$ is an isomorphism of double categories, as desired.
\end{proof}

This proposition gives a method for detecting categories of the form $\C_1 \rtimes_F \C_2$.  We thus immediately get the following observation. 

\begin{theorem}\label{classifyingcondition}
   	A double category $\D$ is isomorphic to the classifying double category of a category if is is simple, antiglobular, and vertically flexible, and the rotation functor is the inclusion of the maximal subgroupoid of $\mathcal H(\D)$.
\end{theorem}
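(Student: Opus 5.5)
The plan is to apply the preceding proposition directly and then identify the semidirect product it produces with $\C \rtimes_I \iso\C$ for $\C = \mathcal H(\D)$. Suppose $\D$ is simple, antiglobular, and vertically flexible. Vertical flexibility means strict locality with respect to $\rotatel$ and $\rotater$. Together with antiglobularity, i.e.\ $\truncateh$-locality, this makes the rotation functor $R = R_\D \colon \mathcal V(\D) \rto \mathcal H(\D)$ well defined by the lemma above. By the proposition, $R$ induces an isomorphism of double categories $\iota \colon \D \rto \mathcal H(\D) \rtimes_R \mathcal V(\D)$, which is the identity on objects and on both kinds of morphisms.

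It remains to use the last hypothesis: $R$ is the inclusion of the maximal subgroupoid of $\mathcal H(\D)$. I would read this as saying that $R$ is injective on morphisms with image exactly $\iso \mathcal H(\D)$. Since $R$ is the identity on objects, it then gives an isomorphism of categories $\mathcal V(\D) \cong \iso \mathcal H(\D)$ that commutes with the inclusion $I \colon \iso\mathcal H(\D) \rto \mathcal H(\D)$, so that $I \circ R' = R$, where $R'$ is the corestriction of $R$. Next I would check the general fact that the construction $\C \rtimes_F \C'$ is functorial in isomorphisms over $\C$. Given an isomorphism $\phi \colon \C' \rto \C''$ that is the identity on objects and satisfies $F'' \phi = F$, relabelling vertical morphisms along $\phi$ gives an isomorphism $\C \rtimes_F \C' \cong \C \rtimes_{F''} \C''$. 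This holds because squares in both double categories are defined by the same commutativity condition in $\C$, namely $R(h)f = jR(g)$ versus $I(\phi h) f = j I(\phi g)$, and these conditions coincide. Composing, we obtain $\D \cong \mathcal H(\D) \rtimes_I \iso \mathcal H(\D)$, which is the classifying double category of $\mathcal H(\D)$.

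The only potential obstacle is interpretive rather than technical. ``The rotation functor is the inclusion'' must be understood up to the identification just described, because $\mathcal V(\D)$ and $\iso\mathcal H(\D)$ are a priori different categories with the same objects. Once the hypothesis is read as ``$R$ is an isomorphism onto $\iso \mathcal H(\D)$'', everything else is bookkeeping. Compatibility with composition of squares is automatic, since both sides are simple double categories, as already noted in the proof of the proposition.
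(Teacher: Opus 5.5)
Your proposal is correct and matches the paper's route. The paper states the theorem as an immediate consequence of the preceding proposition, $\D \cong \mathcal H(\D) \rtimes_R \mathcal V(\D)$, together with reading the hypothesis on $R$ as identifying this double category with $\mathcal H(\D) \rtimes_I \iso \mathcal H(\D)$. Your relabelling of vertical morphisms along $\mathcal V(\D) \cong \iso \mathcal H(\D)$ just spells out the step the paper leaves implicit.
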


In this discussion, we have discussed properties such as being ``vertically a groupoid,'' ``the inclusion of a subcategory,'' and ``the inclusion of the maximal subgroupoid''.  As an exercise in analyzing properties through lifting conditions, and because it is useful when making comparisons with simplicial objects later in the paper, we reformulate these conditions in terms of lifting properties.  We state these results without proof, as the arguments are very similar to others in this section.

\begin{definition} 
  	Let $\cE$ be the category with two objects $0$ and $1$, a single morphism $0 \rto 1$ and a single morphism $1 \rto 0$, and let $\invert \colon [1] \rto \cE$ be the functor that is the identity on objects.
\end{definition}

\begin{lemma}\label{invertiblelift}
  	A category $\C$ is a groupoid if and only if it is $\invert$-local.  Consequently, for a double category $\D$, its vertical category $\mathcal V(\D)$ is a groupoid if and only if it is $\{\V(\invert) \colon \V([1]) \rto \V(\cE)\}$-local. 
\end{lemma}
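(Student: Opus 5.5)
The first claim reduces to unwinding what a lift along $\invert$ means. A functor $[1] \rto \C$ is just a morphism $f \colon A \rto B$ of $\C$. A functor $\cE \rto \C$ is a pair of morphisms $g \colon A \rto B$ and $g' \colon B \rto A$ with $g'g = \id_A$ and $gg' = \id_B$. This holds because $\cE$ has only one morphism between any two objects, so both composites must be the identities. Precomposition with $\invert$ sends the pair $(g,g')$ to $g$. So a lift of $f$ along $\invert$ is exactly a choice of two-sided inverse for $f$. Hence $\C$ is $\invert$-local if and only if every morphism is invertible, that is, if and only if $\C$ is a groupoid. Inverses are unique, so locality and strict locality coincide here.

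For the double-categorical statement, I would reduce to the first part using \cref{functorlift} or \cref{adjunctionlift}, in one of two ways.

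The first way is to observe that $\V$, restricted to $1$-categories, has a right adjoint $\D \mapsto \mathcal V(\D)$. A double functor $\V(\C) \rto \D$ is determined by a functor $\C \rto \mathcal V(\D)$: the squares of $\V(\C)$ are vertical identity squares, and these must go to identity squares. Applying \cref{adjunctionlift} with $F = \mathcal V$ and $G = \V$, a double category $\D$ is $\V(\invert)$-local if and only if the category $\mathcal V(\D)$ is $\invert$-local. By the first part, this holds if and only if $\mathcal V(\D)$ is a groupoid.

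The second way, which avoids the adjunction, is a direct check. Double functors $\V([1]) \rto \D$ and $\V(\cE) \rto \D$ are exactly functors $[1] \rto \mathcal V(\D)$ and $\cE \rto \mathcal V(\D)$, compatibly with precomposition. The argument in the first paragraph then applies verbatim.

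The only point requiring care is confirming that double functors out of $\V(\C)$ impose no additional data or conditions on squares. Every square in $\V(\C)$ is an identity square, so it is forced to map to an identity square. Once this is verified, the rest of the argument is immediate.
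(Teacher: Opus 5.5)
Your proof is correct and takes the same route the paper intends. The paper states this lemma without proof, noting only that the argument is like others in the section. In your version, lifts along $\invert$ are exactly two-sided inverses, which are unique. The double-categorical statement then follows because double functors $\V(\C) \rto \D$ are the same as functors $\C \rto \mathcal V(\D)$, that is, via the adjunction $\V \dashv \mathcal V$ and Lemma~\ref{adjunctionlift}, just as the paper later handles $\mathfrak{HT} \dashv \mathbb{HT}$.
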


% \begin{proof}
%   A functor $[1] \rto \C$ picks out a single morphism in $\C$.  If $\C$ is
%   $\iota$-local, this means that any morphism of $\C$ is invertible---i.e. that
%   $\C$ is a groupoid.  Conversely, if $\C$ is a groupoid then any morphism $[1]
%   \rto \C$ extends to a morphism $\E \rto \C$ by sending the morphism $1 \rto 0$
%   to the inverse of the morphism which is the image of $0 \rto 1$.
  
%   The second part follows immediately from the first.
% \end{proof}

Since the functor $R$ is defined as the identity on objects, it is the inclusion of a subcategory exactly when it is faithful.

\begin{definition} 
	Let $\doublerotate$ denote the identity-on-objects double functor depicted by
  	\[ \doublerotate \colon
	\begin{tikzcd}
      	1 \rar[equals] & 1 \dar[equals] \\
      	0 \dar[etail,swap]{f} \ar[u,etail,swap]{g} \rar[tail]{h} & 1 \dar[equals] \\
      	1 \rar[equals] & 1
    \end{tikzcd} \rightarrow
    \begin{tikzcd}
     	0 \rar[tail] \dar[etail] & 1 \dar[equals] \\ 1 \rar[equals] & 1.
    \end{tikzcd} \]
\end{definition}

\begin{lemma}
  	Let $\D$ be a double category that is simple, antiglobular, and vertically flexible.  The rotation functor $R$ is faithful if and only if $\D$ is local with respect to $\doublerotate$.
\end{lemma}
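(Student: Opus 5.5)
The plan is to unwind what a double functor out of the domain of $\doublerotate$ picks out in $\D$, and to compare this with the definition of $R$. Write $\D_0$ for the domain of $\doublerotate$. Since $\D_0$ is freely generated by the pictured data, a double functor $\D_0 \rto \D$ amounts to the following data:
\begin{itemize}
\item two vertical morphisms $f, g \colon A \erto B$ with the same source and target;
\item a horizontal morphism $h \colon A \mrto B$;
\item two squares in $\D$.
\end{itemize}
Redrawing the upper square of the picture with $g$ pointing downward, both squares have the shape
\[ \distsquare{A}{B}{B}{B}{h}{f}{=}{=} \qquad \distsquare{A}{B}{B}{B}{h}{g}{=}{=} \]
(the first with $f$, the second with $g$), so they are exactly the squares appearing in the definition of $R_\D$. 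Since $\D$ is strictly $\{\rotatel,\truncateh\}$-local, a square of this shape exists precisely when its top edge equals $R(f)$ (respectively $R(g)$). Hence double functors $\D_0 \rto \D$ correspond exactly to pairs of parallel vertical morphisms $f,g$ with $R(f) = R(g)$.

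I would also check that the vertical identities on the right, and the bottom edges, cause no extra freedom, using simplicity and antiglobularity; this is the step where I expect to be most careful, since one must verify that the two squares are genuinely the rotation squares and not some other squares. The codomain of $\doublerotate$ is the rotation square of a single vertical morphism. A double functor out of it is therefore one vertical morphism $k \colon A \erto B$ together with its rotation square, and $\doublerotate$ identifies $f$ and $g$ with $k$.

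For the equivalence itself: a lift of a given $\D_0 \rto \D$ along $\doublerotate$ exists if and only if $f = g$, since the lift must send the single vertical morphism to both $f$ and $g$. It is then automatically unique, because $\doublerotate$ is surjective on all cells and hence an epimorphism; so locality and strict locality coincide. Thus $\D$ is $\doublerotate$-local if and only if $R(f)=R(g)$ implies $f=g$ for all parallel $f,g$. As $R$ is the identity on objects, this is exactly faithfulness of $R$.
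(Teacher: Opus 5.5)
Your proof is correct. The paper states this lemma without proof, and your argument is the routine verification it has in mind: unwind a double functor out of the domain of $\doublerotate$ into a parallel pair $f,g$ whose rotation-shaped squares share the top edge $h$, use strict locality to force $h=R(f)=R(g)$, and note that a lift exists exactly when $f=g$ (by simplicity the two squares then coincide), uniquely because $\doublerotate$ is epic.

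Two small points, neither of which is a gap. The squares in question have the boundary of the codomain of $\rotater$, so uniqueness of their top edge is directly strict $\rotater$-locality, which vertical flexibility provides; your $\rotatel$ label follows the paper's own wording of the definition of $R_\D$, and the two rotation functors agree in any case. The extra check you flag on the right and bottom edges is unnecessary, since those edges are fixed identities in the domain and are preserved by any double functor.
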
 

It remains to describe a lifting property for the inclusion of the maximal subgroupoid.  As we have already given a lifting property for the property that $\mathcal V(\D)$ is a groupoid, it suffices to check maximality.

\begin{definition}
	Let $\hiso$ denote the inclusion double functor
  	\[\hiso \colon \H(\E) \rto \E\rtimes_{\id_\E} \E\]
	of the horizontal isomorphism into the double category of morphisms and commutative squares in the category $\E$.
\end{definition}

\begin{lemma}
  	Let $\D$ be a double category with $\mathcal V(\D)$ a groupoid that is simple, antiglobular, and vertically flexible.  Then the rotation functor is the inclusion of the maximal subgroupoid if it is $\{\doublerotate,\hiso\}$-local.
\end{lemma}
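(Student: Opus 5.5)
We must show: if $\mathcal V(\D)$ is a groupoid, $\D$ is simple, antiglobular and vertically flexible, and $\D$ is $\{\doublerotate,\hiso\}$-local, then $R \colon \mathcal V(\D) \rto \mathcal H(\D)$ is the inclusion of the maximal subgroupoid $\iso\mathcal H(\D)$. The plan has three steps: first show $R$ is injective on morphisms, then that its image lies in $\iso \mathcal H(\D)$, and finally that every horizontal isomorphism is hit.

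\emph{Injectivity.} Since $R$ is the identity on objects, being faithful means $R$ is injective on morphisms, so $R$ identifies $\mathcal V(\D)$ with a subcategory of $\mathcal H(\D)$. By the preceding lemma, $\doublerotate$-locality gives exactly that $R$ is faithful.

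\emph{Image consists of isomorphisms.} Because $R$ is a functor and $\mathcal V(\D)$ is a groupoid, $R(g^{-1})$ is a two-sided inverse of $R(g)$. So the image of $R$ lies in $\iso\mathcal H(\D)$.

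\emph{Maximality.} This is the main step. Let $u \colon A \mrto B$ be a horizontal isomorphism with inverse $u^{-1}$; this is a double functor $\H(\E) \rto \D$. By $\hiso$-locality it extends along $\hiso$ to a double functor $\E \rtimes_{\id_\E} \E \rto \D$. The target contains a vertical morphism $v \colon A \erto B$ corresponding to $0 \to 1$. It also contains the commutative square
\[ \begin{tikzcd} 0 \rar[tail] \dar[etail] & 1 \dar[etail]{=} \\ 1 \rar[tail]{=} & 1 \end{tikzcd} \]
with top edge $0\to1$, left edge $0\to1$, and the other two edges identities; it commutes because $\E$ is a contractible groupoid. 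The image of this square in $\D$ has top $u$, left $v$, and identities on the right and bottom. By the definition of the rotation functor (strict $\rotatel$/$\rotater$-locality together with uniqueness), $R(v) = u$. Hence every horizontal isomorphism is in the image of $R$.

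Combining the three steps, $R$ is an isomorphism of $\mathcal V(\D)$ onto $\iso\mathcal H(\D)$, i.e.\ it is the inclusion of the maximal subgroupoid.

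One point needs checking: the shape of the square, top and left versus bottom and right. I will verify that the square used above matches the orientation in the definition of $R_\D$, and if it does not, use the other rotation. Since $\D$ is vertically flexible, the earlier lemma shows the two rotation functors agree, so either choice gives the same $R$.
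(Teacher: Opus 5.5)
Your proof is correct and is the argument the paper intends: the paper states this lemma without proof, but it is set up exactly this way — faithfulness comes from the preceding $\doublerotate$ lemma, the image lands in isomorphisms because $\mathcal V(\D)$ is a groupoid, and maximality comes from $\hiso$-locality together with the uniqueness in strict $\rotater$-locality. The orientation check you flag goes through as written, since the square in the codomain of $\hiso$ with top and left edges $0 \to 1$ and identities on the right and bottom has exactly the shape used to define $R_\D$, so you do not need to fall back on the other rotation.
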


We have now completely characterized which double categories are classifying double categories using lifting properties. However, we can reduce the number of lifting properties one has to check.

\begin{proposition}\label{eitherconnection} 
  	Suppose that $\D$ is a simple, antiglobular double category in which $\mathcal V(\D)$ is a groupoid.  Then $\D$ is $\rotatel$-local if and only if it is $\rotater$-local.
\end{proposition}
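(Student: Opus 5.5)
By symmetry it suffices to show that $\rotatel$-local implies $\rotater$-local; the converse follows by the mirror-image argument, which swaps the two corner positions.

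So assume $\D$ is simple, antiglobular, $\rotatel$-local, and $\mathcal V(\D)$ is a groupoid. Let $f \colon A \erto B$ be a vertical morphism. We must produce a horizontal $k \colon A \mrto B$ and a square with top $k$, left $f$, right $\id_B$ and bottom $\id_B$.

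The plan is to use the inverse $f^{-1} \colon B \erto A$.

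First, apply $\rotatel$-locality to $f^{-1}$. This gives a horizontal morphism $k \colon A \mrto B$ together with a square $\sigma$ whose top and left edges are $\id_B$, whose right edge is $f^{-1}$, and whose bottom edge is $k$. In other words, $\sigma$ is a square from $\id_B$ to $k$, with vertical sides $\id_B$ on the left and $f^{-1}$ on the right.

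Second, vertically compose $\sigma$ with two trivial squares:
\begin{itemize}
\item on top, the vertical identity square on the horizontal morphism $\id_B$, bordered by $f$ on both sides;
\item below, the vertical identity square on $k$, bordered by $\id_A$ and $\id_B$.
\end{itemize}
This needs care: the trivial square must sit on the correct side, and the vertical composites must actually match up. A cleaner route is to paste horizontally instead. Place the horizontal identity square on $f$ (left edge $f$, right edge $f$, top and bottom edges identities) to the left of a suitable square.

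Concretely, I would build the required square as follows.
\begin{enumerate}
\item Take the square $\sigma$ above.
\item Stack on top of it the horizontal identity square on $f$, which has left and right edges $f$ and identity top and bottom edges $\id_A$ and $\id_B$.
\item Place these so that the vertical composite along the right edge reads $f^{-1}\circ f=\id_A$ and along the left edge reads $f$.
\end{enumerate}
Getting the orientations exactly right is the main bookkeeping obstacle. I expect the correct configuration is a $2\times 1$ or $1\times 2$ pasting. In it, the horizontal identity square on $f$ supplies the left edge $f$. The square $\sigma$, read with its right edge $f^{-1}$ composed against $f$, collapses that right edge to an identity. The outer boundary is then top $k$, left $f$, right $\id_B$, bottom $\id_B$, as required.

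Uniqueness of the resulting square is automatic by simplicity. Uniqueness of $k$, needed for strictness, follows as in the well-definedness lemma. Given two candidate squares with tops $k$ and $k'$, paste one of them with the $\rotatel$-square for $f$. This yields a square with identity vertical edges from $k$ to $R^\dashv(f)$. Antiglobularity then forces $k=R^\dashv(f)$, so $k=k'$.

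This same argument identifies the new rotation with the old one. In that step, the invertibility of $f$ is used again, to cancel the vertical composite $f^{-1}f$ in the pasted diagram.
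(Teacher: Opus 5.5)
There is a genuine gap in your existence step. The $\rotatel$ square for $f^{-1}\colon B\erto A$ has top and left edges $\id_B$ and right edge $f^{-1}$, so its bottom edge is $k\colon B\mrto A$, not $A\mrto B$. The boundary you aim for (top $k$, left $f$, right $\id_B$, bottom $\id_B$) is therefore not even well-typed.

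The one pasting you describe that does compose is the horizontal identity square on $f$ stacked above $\sigma$. Its boundary is top $\id_A$, left $f$, right $f^{-1}f=\id_A$, bottom $k$. To turn this into a $\rotater$ square, you must paste a vertical identity square on some $u\colon A\mrto B$ to its right. That yields top $u$, left $f$, right $\id_B$ and bottom $u\circ k$. So what you actually need is a left inverse $u$ of $k$. This one-sided invertibility of $k$ is the missing idea. It is the counterpart of the step ``since $f$ is an isomorphism, so is $R(f)$'' in the paper's proof, which treats the direction $\rotater\Rightarrow\rotatel$ and gets invertibility from functoriality of the rotation.

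This is exactly where antiglobularity must enter, and your existence argument never uses it, so no choice of orientations can rescue it. Here is an example. Let the vertical category be the walking isomorphism $f\colon A\erto B$, and let the horizontal category be free on $g\colon A\mrto B$ and $k\colon B\mrto A$. Let a unique square exist on a boundary exactly when the length of its top edge is at most that of its bottom edge. This double category is simple, $\mathcal V(\D)$ is a groupoid, it is $\rotatel$-local, and all the squares you use exist in it. Yet it has no $\rotater$ square for $f$, since that would need a top edge $A\mrto B$ of length $0$.

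In the antiglobular case the repair is one more pasting. Let $g$ be the bottom edge of a $\rotatel$ square for $f$. Form a $2\times 2$ grid with $\sigma$ in the top left, the horizontal identity square on $f^{-1}$ in the top right, the vertical identity square on $k$ in the bottom left, and the $\rotatel$ square for $f$ in the bottom right. The outer square has vertical edges $\id_B$ and $ff^{-1}=\id_B$, top $\id_B$ and bottom $g\circ k$. Antiglobularity then gives $g\circ k=\id_B$, and taking $u=g$ above produces the $\rotater$ square for $f$ with top $g$.

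Your uniqueness argument, pasting with the $\rotatel$ square for $f$ and applying antiglobularity, is fine as written.
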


In particular, to check vertical flexibility it is only necessary to check one of $\rotater$ and $\rotatel$

\begin{proof}
  	The forward implication is direct from the definitions.  We thus focus on proving that if $\D$ is strictly $\{\rotater,\truncateh,\flatten\}$-local then it is vertically flexible; the other case has an analogous proof.
  
  	Since $\D$ is strictly $\rotater$-local, any morphism $f \colon A \erto B$ extends to a square
  	\[ \begin{tikzcd}
      	A \rar[tail]{R(f)} \dar[etail]{f} & B \dar[equals] \\ B \rar[equals] & B.
    \end{tikzcd}\]
  	Since $f$ is an isomorphism, so is $R(f)$.  Thus the diagram 
  	\[ \begin{tikzcd}
      	B \dar[etail,swap]{f^{-1}} \ar[rr,equals] & & B \dar[etail]{f^{-1}} \\
      	A \rar[tail]{R(f)} \dar[etail,swap]{f} & B \rar[tail]{R(f)^{-1}} \dar[equals] & A \dar[equals] \\
      	B \rar[equals] & B \rar[tail]{R(f)^{-1}} & B
    \end{tikzcd}\]
    exists, and the outside square establishes that $\D$ is $\rotatel$-local.  Moreover, this second square is unique, because this process is reversible; a square with all identities on the boundary must be an identity square, by $\flatten$-locality.  Thus $\D$ is strictly $\rotatel$-local, as desired.
\end{proof}

In summary then, classifying double categories have the following characterization.

\begin{corollary}\label{classifyingliftingcondition}
	A double category $\D$ is isomorphic to the classifying double category of its horizontal category if and only if it is $\{\flatten,\truncateh,\V(\invert),\doublerotate,\hiso\}$-local and either strictly $\rotater$-local or strictly $\rotatel$-local.
\end{corollary}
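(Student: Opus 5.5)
The plan is to assemble the corollary from the results of this subsection, proving the two directions separately. The key is to translate each lifting condition in the list into the corresponding structural property established earlier in this section.

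\emph{The ``only if'' direction.} Suppose $\D \cong \C \rtimes_I \iso \C$ with $\C = \mathcal H(\D)$. I would verify each locality condition directly from the description of squares as commutative squares in $\C$ whose vertical sides are isomorphisms.
\begin{itemize}
\item $\flatten$: a square is a commutativity condition on its boundary, so it is determined by that boundary.
\item $\truncateh$: a horizontally globular square says $1\circ f = g \circ 1$, so $f=g$ and the square is the identity.
\item $\V(\invert)$: $\mathcal V(\D)=\iso\C$ is a groupoid, so Lemma~\ref{invertiblelift} applies.
\item $\rotater$: given a vertical $f$, the unique horizontal morphism completing the square is $I(f)$ itself, which gives strict locality.
\item $\doublerotate$: this holds because the resulting rotation functor $R=I$ is faithful, by the lemma characterizing faithfulness of $R$.
\item $\hiso$: a horizontal isomorphism of $\C$ lies in $\iso\C$, so it is the image of a vertical morphism. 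Using simplicity, the required commutative squares in the double category of commutative squares in $\E$ lift.
\end{itemize}

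\emph{The ``if'' direction.} Here I use the established implications in order.
\begin{enumerate}
\item $\flatten$-locality gives simplicity, and $\truncateh$-locality gives antiglobularity.
\item $\V(\invert)$-locality makes $\mathcal V(\D)$ a groupoid, by Lemma~\ref{invertiblelift}.
\item With these in hand, Proposition~\ref{eitherconnection} upgrades strict locality with respect to either one of $\rotater$ or $\rotatel$ to both. Hence $\D$ is vertically flexible.
\item The rotation functor $R_\D$ is then well defined, and the proposition preceding Theorem~\ref{classifyingcondition} gives $\D \cong \mathcal H(\D)\rtimes_R \mathcal V(\D)$.
\item The two lemmas on $\doublerotate$ and $\hiso$ show that $R$ is the inclusion of the maximal subgroupoid of $\mathcal H(\D)$.
\item Theorem~\ref{classifyingcondition} then identifies $\D$ with $\mathcal H(\D)\rtimes_I \iso\mathcal H(\D)$.
\end{enumerate}

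\emph{Main obstacle.} I expect the hardest step to be the maximality argument via $\hiso$, since it was stated without proof. The intended reasoning is as follows. Given a horizontal isomorphism $u$ with inverse $u^{-1}$, locality against $\hiso$ produces vertical morphisms $v$ together with squares relating $v$ to $u$. Simplicity and antiglobularity then force $R(v)=u$, so every horizontal isomorphism lies in the image of $R$. Faithfulness of $R$, together with the fact that $R$ is the identity on objects, makes it an isomorphism onto this subgroupoid. Since $\mathcal V(\D)$ is a groupoid, $R$ lands in the isomorphisms, so the image is exactly $\iso\mathcal H(\D)$.

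A secondary subtlety is strictness. Strict locality is needed only for the rotation maps, and the remaining conditions are automatically strict because the relevant functors are epimorphisms of double categories, as noted for $\flatten$. I would check this remark for $\truncateh$, $\V(\invert)$ and $\doublerotate$ as well.
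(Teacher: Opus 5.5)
Your proposal is correct and follows the paper's route: the corollary is obtained by chaining Lemma~\ref{invertiblelift}, Proposition~\ref{eitherconnection}, the isomorphism $\D \cong \mathcal H(\D)\rtimes_R \mathcal V(\D)$, the lemmas on $\doublerotate$ and $\hiso$, and Theorem~\ref{classifyingcondition}, with the converse checked directly on $\C \rtimes_I \iso\C$. Your sketch of the maximality step fills in the argument the paper leaves unstated, and it is correct: the $\hiso$-square for $u$ is the rotation square of $v$, so $R(v)=u$, and faithfulness of $R$ together with $\mathcal V(\D)$ being a groupoid identifies the image of $R$ with $\iso \mathcal H(\D)$.
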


If either of these equivalent conditions holds, $\D$ is strictly local with respect to
\[ \{\flatten, \truncateh, \rotater, \rotatel, \V(\invert), \doublerotate, \hiso\}, \] 
since $\flatten, \truncateh, \V(\invert), \doublerotate$ are epimorphisms of double categories and $\hiso$ is an epimorphism among strictly $\{\truncateh,\rotater,\rotatel,\doublerotate\}$-local double categories. 

We conclude this section by giving a name to this collection of maps.

\begin{definition}
	Let $S_{\cl}$ denote the set $\{\flatten, \truncateh, \rotater, \rotatel, \V(\invert), \doublerotate, \hiso\}$. 
\end{definition}

\section{Classifying triple categories of double categories with shared isomorphisms} \label{classifyingtcsection}

In this section, our goal is to develop a similar construction as in the previous section, so we can associate to any double category a classifying triple category.  A key point here is that, to make sense of a classifying diagram construction, we need the horizontal and vertical categories to have the same isomorphisms, in a fairly strict sense.  We begin by defining this notion, and then give an introductory treatment of triple categories before making the definition of a classifying triple category and characterizing their properties.

\subsection{Double categories with shared isomorphisms}

Let us now specify the sense in which we want the horizontal and vertical categories to have the same isomorphisms. We write $\mathsf{HIso}(A,B)$ and $\mathsf{VIso}(A,B)$ for the sets of horizontal and vertical isomorphisms, respectively, between objects $A$ and $B$ in a double category.

\begin{definition} \label{sharedisos}
	A double category \emph{has shared isomorphisms} if every horizontal isomorphism $f \colon A \mrto[\cong] B$ has a corresponding vertical isomorphism $\varphi(f) \colon A \erto[\cong] B$, such that $\varphi \colon \mathsf{HIso}(A,B) \cong \mathsf{VIso}(A,B)$ is a bijection for all objects $A,B$, the square of horizontal arrows
	\[ \begin{tikzcd}
		A \rar[tail]{f} \dar[tail]{\vcong}[swap]{h} & A' \dar[tail]{k}[swap]{\vcong} \\
		B \rar[tail,swap]{h} & B' 
	\end{tikzcd} \]
	commutes if and only if there is a square 
	\[ \begin{tikzcd}
		A \rar[tail]{f} \dar[etail]{\vcong}[swap]{\varphi(h)} & A' \dar[etail]{\varphi(k)}[swap]{\vcong} \\
		B \rar[tail,swap]{g} & B' 
		\dist{1-1}{2-2}
	\end{tikzcd} \]
	between the corresponding vertical isomorphisms, this square is unique relative to its boundary, and likewise for commutative squares between vertical isomorphisms.
\end{definition}

In particular, in a double category with shared isomorphisms it makes sense to talk about isomorphisms and natural isomorphisms without specifying whether they are horizontal or vertical, as they can be regarded as both.

More precisely, given a double category $\D$, there are two notions of a natural morphism of squares, which we can depict via cubical diagrams
\begin{equation}\label{naturalcubes}
	\begin{tikzcd}[row sep=scriptsize,column sep=scriptsize]
	&[18] \bullet \ar[tail,color=red]{dl} \ar[tail]{rr} \ar[etail]{dd} &[-18] &[18] \bullet \ar[tail,color=red]{dl} \ar[etail]{dd} \\
	\bullet \ar[etail]{dd} & & \bullet \\
	& \bullet \ar[tail,color=red]{dl} \ar[tail]{rr} & & \bullet \ar[tail,color=red]{dl} \\
	\bullet \ar[tail]{rr} & & \bullet 
	\ar[phantom,from={2-1},to={4-3},"\square"] \ar[phantom,from={1-2},to={3-4},"\square"]
	\ar[phantom,from={1-2},to={4-1},color=red,"\parallelogramvert"] \ar[phantom,from={1-4},to={4-3},color=red,"\parallelogramvert"]
	\ar[phantom,from={1-2},to={2-3},color=red,"\circlearrowleft"] \ar[phantom,from={3-2},to={4-3},color=red,"\circlearrowleft"]
	\ar[tail,from={2-1},to={2-3},crossing over] \ar[etail,from={2-3},to={4-3},crossing over]
	\end{tikzcd} 
	\qquad\qquad\qquad
	\begin{tikzcd}[row sep=scriptsize,column sep=scriptsize]
	&[18] \bullet \ar[etailhor,color=red]{dl} \ar[tail]{rr} \ar[etail]{dd} &[-18] &[18] \bullet \ar[etailhor,color=red]{dl} \ar[etail]{dd} \\
	\bullet \ar[etail]{dd} & & \bullet \\
	& \bullet \ar[etailhor,color=red]{dl} \ar[tail]{rr} & & \bullet \ar[etailhor,color=red]{dl} \\
	\bullet \ar[tail]{rr} & & \bullet .
	\ar[phantom,from={2-1},to={4-3},"\square"] \ar[phantom,from={1-2},to={3-4},"\square"]
	\ar[phantom,from={1-2},to={4-1},color=red,"\circlearrowleft"] \ar[phantom,from={1-4},to={4-3},color=red,"\circlearrowleft"]
	\ar[phantom,from={1-2},to={2-3},color=red,"\parallelogram"] \ar[phantom,from={3-2},to={4-3},color=red,"\parallelogram"]
	\ar[tail,from={2-1},to={2-3},crossing over] \ar[etail,from={2-3},to={4-3},crossing over]
	\end{tikzcd} 
\end{equation}

In the left-hand cube, we ask that the red morphisms be horizontal morphisms so that the top and bottom faces commute in the horizontal category, and the front and back faces be squares in $\D$.  In the right-hand cube, we instead ask that the red morphisms be vertical morphisms so that the front and back faces commute in the vertical category, and the top and bottom faces be squares in $\D$.  If we restrict to natural isomorphisms, then the definition of shared isomorphisms guarantees that these two notions coincide.  %This observation is key to making sense of our next definition. \jbnoteil{Will need to change the wording, now that this definition has moved...}

Recall the identity-on-objects double functor $\hiso \colon \H(\E) \to \E \rtimes_{\id_\E} \E$ and consider the analogous functor $\viso \colon \V(\E) \to \E \rtimes_{\id_\E} \E$ that includes the vertical isomorphism into $\E \rtimes_{\id_\E} \E$. The double category $\E \rtimes_{\id_\E} \E$ contains the non-identity squares
%\footnoteil{The double category also contains the additional 4 squares with the roles of 0 and 1 reversed, among others, but we do not explicitly need them.} \jbnoteil{I'm confused by the purpose of this footnote.}\bsnoteil{I was worried about giving the false impression that these are all of the non-identity squares in this double category, but if you don't think that's an issue I'm happy to remove it.}
\begin{equation}\label{walkingsharediso}
	\distsquare{0}{1}{1}{1}{}{}{}{} \qquad
	\distsquare{0}{0}{0}{1}{}{}{}{} \qquad
	\distsquare{0}{1}{0}{0}{}{}{}{} \qquad
	\distsquare{0}{0}{1}{0,}{}{}{}{} \qquad 
\end{equation} 
so that a double functor of the form $\E \rtimes_{\id_\E} \E \to \D$ encodes a horizontal isomorphism $f$ and vertical isomorphism $f'$ in $\D$ equipped with squares in $\D$
\begin{equation}\label{basicsquares}
	\distsquare{A}{B}{B}{B}{f}{f'}{\id_B}{\id_B} \qquad
	\distsquare{A}{A}{A}{B}{\id_A}{\id_A}{f'}{f} \qquad
	\distsquare{A}{B}{A}{A}{f}{\id_A}{f'^{-1}}{\id_A} \qquad
	\distsquare{A}{A}{B}{A.}{\id_A}{f'}{\id_A}{f^{-1}.} \qquad
\end{equation}
We call such a pair $(f,f')$ a \emph{shared isomorphism}. In a $\hiso$-local double category every horizontal isomorphism belongs to a shared isomorphism, while in a $\viso$-local double category the same holds for vertical isomorphisms. If a double category is strictly $\{\hiso,\viso\}$-local, then there is a bijection $\varphi \colon \mathsf{HIso}(A,B) \cong \mathsf{VIso}(A,B)$ for each pair of objects $A,B$ and every isomorphism belongs to a unique shared isomorphism of the form $(f,\varphi(f))$.

\begin{proposition}\label{sharedisochar}
	A double category has shared isomorphisms if and only if it is totally antiglobular and every horizontal or vertical isomorphism belongs to a unique shared isomorphism.
\end{proposition}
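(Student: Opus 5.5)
We prove the two implications separately, treating Definition~\ref{sharedisos} as a list of conditions on squares whose boundary involves isomorphisms. The dictionary between the two formulations is as follows. A shared isomorphism $(f,f')$ consists of a horizontal isomorphism $f$, a vertical isomorphism $f'$, and the four squares \eqref{basicsquares}. A bijection $\varphi$ as in Definition~\ref{sharedisos} should send $f$ to $f'$.

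\emph{Forward direction.} Suppose $\D$ has shared isomorphisms. First we show total antiglobularity. Take a horizontally globular square with top $f$, bottom $g$ and vertical sides $1_A,1_B$. Since $1_A=\varphi(1_A)$ (this needs checking: the identity horizontal square commutes, so a square with boundary $1_A,1_A,1_A,1_A$ and vertical sides $\varphi(1_A),\varphi(1_A)$ exists; that $\varphi$ preserves identities I expect to extract from functoriality-type consequences of the ``if and only if''), the definition applies with $h=k=\id$. The commutative square $f=g$ must then hold, so $f=g$. Uniqueness relative to the boundary then forces the square to be the identity square on $f$. The vertical case is symmetric.

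Next we produce the shared isomorphism for a horizontal isomorphism $f$, taking $f'=\varphi(f)$. Each of the four squares in \eqref{basicsquares} comes from a commutative square of horizontal isomorphisms. For the first square, $\id_B\circ f=\id_B\circ f$ holds with vertical sides $f$ and $\id_B$, so the ``if'' direction supplies it. The others are handled similarly, using that $\varphi$ sends $f^{-1}$ to $\varphi(f)^{-1}$. This last fact follows by composing the squares so obtained and appealing to uniqueness.

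For uniqueness of the shared isomorphism, suppose $(f,g')$ is any shared isomorphism. Its first square has boundary with vertical side $g'$. Writing $g'=\varphi(h)$ (using that $\varphi$ is surjective), the ``only if'' direction forces $\id_B\circ f=\id_B\circ h$, so $h=f$ and hence $g'=\varphi(f)$. The remaining squares are determined by uniqueness relative to the boundary. Vertical isomorphisms are handled symmetrically.

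\emph{Reverse direction.} Suppose $\D$ is totally antiglobular and every isomorphism lies in a unique shared isomorphism. Define $\varphi(f)$ as the vertical partner of $f$. Uniqueness applied to vertical isomorphisms shows $\varphi$ is a bijection. Now consider a square with top $f$, bottom $g$ and vertical sides $\varphi(h),\varphi(k)$.
\begin{itemize}
\item Given such a square, paste the basic squares \eqref{basicsquares} for $h$ and $k$ onto it to convert the vertical sides into identities, with $h$ and $k$ moved onto the horizontal edges. The result is a horizontally globular square between $k\circ f$ and $g\circ h$. Antiglobularity gives $kf=gh$.
\item Conversely, if $kf=gh$, paste the first square of \eqref{basicsquares} for $h$ (after $f$) with the appropriate square for $k$ to build a square with this boundary.
\item For uniqueness, pasting two such squares with the inverse basic squares yields globular squares. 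These must be identities, so the original two squares agree.
\end{itemize}

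The main obstacle is this pasting bookkeeping. We must verify that the composite of the basic squares for $h$ with their inverse counterparts is an identity square, so that the conversion process is reversible. I expect to derive this from total antiglobularity, since such a composite is globular on both sides. This reversibility is exactly what gives uniqueness relative to the boundary, and the commutative squares between vertical isomorphisms are handled by the transposed argument.
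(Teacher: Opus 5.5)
Your proposal follows the paper's proof essentially step for step. In the forward direction you transport globular squares and the squares of \eqref{basicsquares} through $\varphi$ to commutative squares in $\mathcal H(\D)$. In the reverse direction you paste the basic squares for $h$ and $k$ onto a given square to get a horizontally globular square from $kf$ to $gh$, and you build the converse square by pasting basic squares. Uniqueness then comes from the invertibility of the basic squares in $\mathcal H(\D_v)$, which indeed follows from total antiglobularity, since one composite is horizontally globular and the other vertically globular. You are slightly more thorough than the paper in also checking that $\varphi$ is a bijection and that the partner is unique.

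The one step that will not go as planned is the one you flagged. You cannot extract $\varphi(1_A)=1_A$ from the ``if and only if'' clauses, because under the literal wording of Definition~\ref{sharedisos} it can fail. Take the double category of commutative squares of abelian groups with $\varphi(h)=-h$. This satisfies every clause: a square with vertical sides $-h,-k$ exists iff $kf=gh$, and dually. Yet $\varphi(1_{\mathbb Z})=-1_{\mathbb Z}$, and $(f,\varphi(f))$ is not a shared isomorphism, since the first square of \eqref{basicsquares} would force $-f=f$. The proposition still holds in this example, with $(f,f)$ as the shared isomorphism.

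The paper's proof makes the same tacit assumption: it treats $\varphi$ as an identity-preserving identification, like the groupoid isomorphism $\iso(\mathcal H(\D))\to\iso(\mathcal V(\D))$ of a CGW category. Under that reading your argument is complete. Without it, the repair is short.
\begin{itemize}
\item Set $e_A=\varphi^{-1}(1_A)$. The identity square on a horizontal $f\colon A\to A'$ gives $e_{A'}f=fe_A$.
\item A horizontally globular square from $f$ to $g$ then gives $fe_A=e_{A'}f=ge_A$, hence $f=g$. Uniqueness relative to the boundary makes it an identity square. Vertically globular squares are handled the same way using $\varphi(1_A)$ and the vertical clause.
\item Define $\chi(h)=\varphi(he_A)$. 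This is again a bijection satisfying the horizontal correspondence: a square with sides $\chi(h),\chi(k)$ exists iff $ke_{A'}f=ghe_A$ iff $kf=gh$. Moreover $\chi(1_A)=1_A$.
\end{itemize}
With $\chi$ in place of $\varphi$, the rest of your forward argument goes through verbatim: inverses, the four basic squares, and uniqueness of the partner, with a vertical isomorphism $u$ paired with $\chi^{-1}(u)$.
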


In other words, a double category has shared isomorphisms if and only if it is strictly local with respect to the set $\{\truncateh, \truncatev, \hiso, \viso\}$. 

\begin{proof}
	A double category with shared isomorphisms is totally antiglobular as any square in $\D$ between horizontal or vertical identities corresponds to a commutative square between identities, which is always an identity square. From Definition~\ref{sharedisos}, every horizontal or vertical isomorphism belongs to a pair $(f,\varphi(f))$; this pair is a shared isomorphism as the squares in \eqref{basicsquares} correspond under Definition~\ref{sharedisos} to commutative squares in $\mathcal H(\D)$.

	For the converse, we want to show that a double category $\D$ has shared isomorphisms if it is equipped with a bijection $\varphi \colon \mathsf{HIso}(A,B) \cong \mathsf{VIso}(A,B)$ for all objects $A,B$ such that each pair $(f,\varphi(f))$ is a shared isomorphism.  Specifically, we show there is a correspondence between squares in $\D$ of the form
	\[ \begin{tikzcd}
		\bullet \rar[tail]{f} \dar[etail]{\vcong}[swap]{\varphi(h)} & \bullet \dar[etail]{\varphi(k)}[swap]{\vcong} \\
		\bullet \rar[tail,swap]{g} & \bullet 
		\dist{1-1}{2-2}
	\end{tikzcd}
	\qquad\textrm{and}\qquad
	\begin{tikzcd}
		\bullet \rar[tail]{f} \dar[tail,swap]{h} & \bullet \dar[tail]{k} \\
		\bullet \rar[tail,swap]{g} & \bullet,
		\comm{1-1}{2-2}
	\end{tikzcd} \]
	where an analogous argument provides the additional correspondence with the roles of horizontal and vertical morphisms reversed. Note that such a commutative square of horizontal morphisms exists if and only if $k \circ f = g \circ h$. 
		
	From a distinguished square as above left, we get a composite square
	\[ \begin{tikzcd}[row sep=40,column sep=40]
		\bullet \rar[equals] \dar[equals] & \bullet \rar[tail]{f} \dar[etail,swap]{\varphi(h)} & \bullet \rar[tail]{k} \dar[etail]{\varphi(k)} & \bullet \dar[equals] \\
		\bullet \rar[tail,swap]{h} & \bullet \rar[tail,swap]{g} & \bullet \rar[equals] & \bullet. 
		\dist{1-1}{2-2} \dist{1-2}{2-3} \dist{1-3}{2-4} 
	\end{tikzcd} \]
	By antiglobularity, this composite is a vertical identity square, so we can conclude that $k \circ f = g \circ h$. As the squares on the left and right of the composite are isomorphisms in $\mathcal H(\D_v)$, the middle square must be unique relative to its boundary, so to complete the desired correspondence it suffices to assume that $k \circ f = g \circ h$ and build the composite square
	\[ \begin{tikzcd}
		\bullet \rar[tail]{f} \dar[equals] & \bullet \rar[equals] \dar[equals] & \bullet \dar[etail]{\varphi(k)} \\
		\bullet \rar[tail,swap]{f} \dar[equals] & \bullet \rar[tail,swap]{k} & \bullet \dar[equals] \\
		\bullet \rar[tail]{h} \dar[etail,swap]{\varphi(h)} & \bullet \rar[tail]{g} \dar[equals] & \bullet \dar[equals] \\
		\bullet \rar[equals] & \bullet \rar[tail,swap]{g} & \bullet. 
		\dist{1-1}{2-2} \dist{1-2}{2-3} \dist{2-1}{3-3} \dist{3-1}{4-2} \dist{3-2}{4-3}
	\end{tikzcd} \]
\end{proof}

\subsection{Triple categories}

To make an analogous construction for double categories, we need to work on the level of triple categories, which we now introduce briefly.  Triple categories do not seem to have been described explicitly in literature in the way that we describe them here, but the notion is not new.  They can be taken to be the $n=3$ case of the $n$-fold categories of \cite[Definition 2.2]{nfold}; they are also a special case of the intercategories of \cite[\S 1]{intercategories}, in which all associativity, unit, and interchange equations hold strictly.

%\jbnoteil{What do you think of the above?  I took the material out of the footnote and added a bit more exposition.  It seems reasonable to have in the intro paragraph of the section.}

\begin{definition} 
	A \emph{triple category} $\T$ is a category internal to double categories.  As such, it consists of a triple of categories with the same objects whose morphisms are called \emph{horizontal morphisms}, \emph{vertical morphisms}, and \emph{transverse morphisms}; composable \emph{squares} between each pair of morphism types, called \emph{hv-, ht-}, and \emph{vt-squares} forming three double categories,
	\[ \begin{tikzcd}
		\bullet \rar[tail] \dar[etail] & \bullet \dar[etail] \\
		\bullet \rar[tail] & \bullet
		\dist{1-1}{2-2}
	\end{tikzcd}
	\qquad\qquad
	\begin{tikzcd}
		\bullet \rar[tail] \dar & \bullet \dar \\
		\bullet \rar[tail] & \bullet
		\dist{1-1}{2-2}
	\end{tikzcd}
	\qquad\qquad
	\begin{tikzcd}
		\bullet \rar[etailhor] \dar & \bullet \dar \\
		\bullet \rar[etailhor] & \bullet,
		\dist{1-1}{2-2}
	\end{tikzcd} \]
	respectively; and a notion of \emph{cubes} of the form
	\[ \begin{tikzcd}[row sep=scriptsize,column sep=scriptsize]
		&[18] \bullet \ar{dl} \ar[tail]{rr} \ar[etail]{dd} &[-18] &[18] \bullet \ar{dl} \ar[etail]{dd} \\
		\bullet \ar[etail]{dd} & & \bullet \\
		& \bullet \ar{dl} \ar[tail]{rr} & & \bullet \ar{dl} \\
		\bullet \ar[tail]{rr} & & \bullet 
		\ar[phantom,from={2-1},to={4-3},"\square"] \ar[phantom,from={1-2},to={3-4},"\square"]
		\ar[phantom,from={1-2},to={4-1},"\parallelogramvert"] \ar[phantom,from={1-4},to={4-3},"\parallelogramvert"]
		\ar[phantom,from={1-2},to={2-3},"\parallelogram"] \ar[phantom,from={3-2},to={4-3},"\parallelogram"]
		\ar[tail,from={2-1},to={2-3},crossing over] \ar[etail,from={2-3},to={4-3},crossing over]
	\end{tikzcd} \]
	that are composable in all three directions subject to unit, associativity, and interchange conditions.
\end{definition}

	% 	A square in a double category is \emph{trivial} if it represents the identity morphism in the Hom-category. \jbnoteil{do we want to say more about the internal category viewpoint if we are going to talk about it this way?}

If $\T$ is a triple category, let $\mathcal H(\T), \mathcal V(\T)$, and $\mathcal T(\T)$ be the constituent categories of horizontal, vertical, and transverse morphisms, respectively, and let $\mathbb{HV}(\T), \mathbb{HT}(\T)$, and $\mathbb{VT}(\T)$ be the double categories associated to each pair of morphism types. We also denote by $\mathbb{HV}(\T_t), \mathbb{HT}(\T_v)$, and $\mathbb{VT}(\T_h)$ the double categories with objects the transverse, vertical, and horizontal morphisms, respectively, of $\T$ and whose squares are cubes in $\T$. For instance, in $\mathbb{HV}(\T_t)$, the objects are transverse morphisms of $\T$, the horizontal morphisms are the ht-squares in $\T$, the vertical morphisms are the vt-squares in $\T$, and the squares are cubes in $\T$.

\begin{example}
	We can define triple categories $[\ell] \boxtimes [m] \boxtimes [n]$ freely generated by an $(\ell \times m \times n)$-grid of cubes. It contains a composite cube for each $(\ell' \times m' \times n')$-subgrid.%, along with squares when one of $\ell',m',n'$ are 0 and and morphisms when one or two, respectively, out of $\ell',m',n'$ are 0. 
%\jbnoteil{I think too much is crammed into this sentence with too much assumed in ``respectively"}\bsnoteil{Do you think this is enough detail now? I was going to separate out the description of composite morphisms and squares but I'm not sure it actually adds much intuitively. Another option would be to separately describe the horizontal, vertical, and transverse categories.}
\end{example}

\begin{example}
  	For a triple category $\T$ there is a triple category $\T^\sigma$ for any permutation $\sigma$ of the set $\{h,v,t\}$ that swaps the roles of the three types of morphisms. For example, $([\ell] \boxtimes [m] \boxtimes [n])^{(ht)} = [n] \boxtimes [m] \boxtimes [\ell]$.
\end{example}

\begin{example}
	For a category $\C$ with three subcategories $\C_1,\C_2$, and $\C_3$, each containing all of the objects of $\C$, there is a triple category $\C_1 \bowtie \C_2 \bowtie \C_3$ with $\C_1,\C_2$, and $\C_3$ as the horizontal, vertical, and transverse categories, respectively; squares given by commutative squares in $\C$ between morphisms of the appropriate pair of subcategories; and cubes given by commutative cubes in $\C$ whose horizontal, vertical, and transverse arrows come from $\C_1$, $\C_2$, and $\C_3$, respectively.
\end{example}

\begin{example}\label{doubletotriple}
	For a double category $\D$ there are three standard ways to functorially regard it as a triple category. 
	%\jbnoteil{the latter two notations don't make sense to me}\bsnoteil{I think when these are used the double category is one where the vertical arrows are really behaving more like transverse arrows than vertical ones: unlike a CGW category where horizontal and vertical are on equal footing, in a classifying double category the vertical arrows correspond to the transverse arrows in a classifying triple category, so both $\H$ and $\V$ treat them as transverse while the horizontal arrows become either horizontal or vertical. At least that's what I was thinking when I wrote these down, but looking now it does seem confusing why it's like that. Do you think it would make sense to change how we talk about certain double categories to match this, maybe calling the vertical direction transverse instead?}
	\begin{itemize}
		\item Let $\mathfrak{HV}(\D)$ denote the triple category that agrees with $\D$ in the horizontal and vertical directions and has only identities in the transverse direction.
	
		\item Let $\mathfrak{HT}(\D)$ be the triple category with the same horizontal morphisms as $\D$ and the vertical morphisms of $\D$ as transverse morphisms and have only identities in the vertical direction.
	
		\item Let $\mathfrak{VT}(\D)$ be the triple category with the horizontal morphisms of $\D$ as vertical morphisms and the vertical morphisms of $\D$ as transverse, with only identities in the horizontal direction.
	\end{itemize}
\end{example}

\begin{example}\label{triplebox}
	If $\D$ is a double category, let $[1] \boxtimes_t \D$ be the triple category $\mathfrak{HV}(\D) \times ([0] \boxtimes [0] \boxtimes [1])$ consisting of two copies of $\mathfrak{HV}(\D)$ and a transverse natural transformation between them. Similarly, denote by $[1] \boxtimes_v \D$ the double category $\mathfrak{HT}(\D) \times ([0] \boxtimes [1] \boxtimes [0])$ consisting of two copies of $\mathfrak{HT}(\D)$ and a vertical natural transformation between them.  Finally, we can define $[1] \boxtimes_h \D$ analogously as $\mathfrak{VT}(\D) \times ([1] \boxtimes [0] \boxtimes [0])$.
\end{example}

\begin{definition}
	For a double category $\D$ with shared isomorphisms, the \emph{classifying triple category} of $\D$ has $\D$ as its double category of horizontal and vertical morphisms, isomorphisms in $\D$ as transverse morphisms, commutative squares in $\mathcal H(\D)$ and $\mathcal V(\D)$ between isomorphisms as ht- and vt-squares, and as cubes those of one of the equivalent forms of \eqref{naturalcubes}.
\end{definition}

\subsection{Recognizing classifying triple categories}

We first show how classifying triple categories can be characterized in terms of classifying double categories.

\begin{proposition}\label{triplechar}
	A triple category $\T$ is isomorphic to the classifying triple category of a double category with shared isomorphisms $\D$ if and only if $\mathbb{HT}(\T), \mathbb{VT}(\T), \mathbb{HT}(\T_v)$, and $\mathbb{VT}(\T_h)$ are each isomorphic to the classifying double category of a category. 
\end{proposition}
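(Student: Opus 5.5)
For the forward direction, let $\T$ be the classifying triple category of a double category $\D$ with shared isomorphisms, and read off the four double categories from the definition. The transverse morphisms of $\T$ are the isomorphisms of $\D$, and the ht-squares are commutative squares in $\mathcal H(\D)$ whose transverse sides are isomorphisms. Hence $\mathbb{HT}(\T) = \mathcal H(\D) \rtimes_I \iso \mathcal H(\D)$, and symmetrically $\mathbb{VT}(\T) = \mathcal V(\D) \rtimes_I \iso \mathcal V(\D)$.

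For $\mathbb{HT}(\T_v)$, the objects are the vertical morphisms of $\D$. Its horizontal morphisms are hv-squares, i.e.\ morphisms of $\mathcal H(\D_v)$. Its transverse morphisms are vt-squares between vertical morphisms, i.e.\ natural isomorphisms of vertical arrows. Its squares are cubes of the right-hand form in \eqref{naturalcubes}. I will identify this with the classifying double category of $\mathcal H(\D_v)$. The point is that an isomorphism in $\mathcal H(\D_v)$ is exactly a square whose horizontal sides are isomorphisms. Using Definition~\ref{sharedisos}, such squares correspond bijectively to commutative squares of vertical isomorphisms. A cube with transverse isomorphism edges is then a commutative square in $\mathcal H(\D_v)$, since $\D$ is simple on those faces by the uniqueness clause. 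The case of $\mathbb{VT}(\T_h)$ and $\mathcal V(\D_h)$ is symmetric.

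For the converse, suppose the four double categories are classifying double categories, necessarily of their horizontal categories by Corollary~\ref{classifyingliftingcondition}. Set $\D = \mathbb{HV}(\T)$. First I show that $\D$ has shared isomorphisms. Transverse morphisms are exactly the isomorphisms of $\mathcal H(\D)$ via the rotation functor of $\mathbb{HT}(\T)$, and likewise of $\mathcal V(\D)$ via that of $\mathbb{VT}(\T)$. This gives bijections $\mathsf{HIso}(A,B) \cong \mathsf{TIso}(A,B) \cong \mathsf{VIso}(A,B)$, whose composite I take as $\varphi$. Next I show that hv-squares with vertical sides isomorphisms correspond to commutative squares in $\mathcal H(\D)$. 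For this I use the classifying structure of $\mathbb{HT}(\T_v)$. An hv-square $\sigma$ with invertible vertical sides $\varphi(h),\varphi(k)$ is, via the rotation (companion) squares, related by a cube to an identity-type square. Equivalently, transverse morphisms in $\mathbb{HT}(\T_v)$ are isomorphisms of $\mathcal H(\D_v)$, and cubes are commutative squares there. Combining this with antiglobularity of these double categories, which gives total antiglobularity of $\D$, I verify the criterion of Proposition~\ref{sharedisochar}: each isomorphism lies in a unique shared isomorphism. The four squares \eqref{basicsquares} are produced as hv-faces of cubes built from the companion squares in $\mathbb{HT}(\T)$ and $\mathbb{VT}(\T)$.

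Finally I construct the isomorphism of $\T$ with the classifying triple category of $\D$. It is the identity on $\D$ and uses the rotation functors on transverse morphisms and ht/vt-squares. On cubes it uses that $\mathbb{HT}(\T_v)$ is simple, so a cube is determined by its boundary and exists exactly when the corresponding faces commute in $\mathcal H(\D_v)$. The main obstacle is this cube step together with the shared-isomorphism verification. There one must check that a cube with transverse isomorphism edges is the same data as a cube of the form \eqref{naturalcubes}, i.e.\ that commutativity in $\mathcal H(\D_v)$ matches the existence of front/back hv-squares with red sides transported via $\varphi$. I would attack this by gluing the rotation cubes of $\mathbb{HT}(\T_v)$ onto a given cube and invoking antiglobularity and simplicity to collapse the result, mirroring the composite-square argument in the proof of Proposition~\ref{sharedisochar}.
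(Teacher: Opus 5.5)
Your forward direction is fine in outline, and defining $\varphi$ as the composite of the two rotation bijections through $\mathcal T(\T)$ is exactly what the paper does. One small point there: the cube identification is cleanest via the left-hand form of \eqref{naturalcubes}, which is literally a commutative square in $\mathcal H(\D_v)$. The uniqueness clause of Definition~\ref{sharedisos} does not give the naturality equation, because its composite squares need not have invertible sides.

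The real gap is in the converse. There you assert that hv-squares with invertible vertical sides correspond to commutative squares in $\mathcal H(\D)$, and that antiglobularity of the four double categories yields total antiglobularity of $\D=\mathbb{HV}(\T)$. The hypotheses give something narrower.
\begin{itemize}
\item The rotation functor of $\mathbb{VT}(\T_h)$, together with $\mathbb{HT}(\T)$, matches ht-squares (commutative squares in $\mathcal H(\D)$) only with the squares of $\D$ that are \emph{invertible in} $\mathcal V(\D_h)$.
\item The rotation functor of $\mathbb{HT}(\T_v)$, together with $\mathbb{VT}(\T)$, matches vt-squares (commutative squares in $\mathcal V(\D)$) only with squares invertible in $\mathcal H(\D_v)$. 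So the correspondence you attribute to $\mathbb{HT}(\T_v)$ actually comes from $\mathbb{VT}(\T_h)$.
\end{itemize}
A square with identity or invertible sides need not be invertible. Antiglobularity of the four double categories constrains only ht-squares, vt-squares and cubes. So nothing in the hypotheses controls a non-invertible globular hv-square.

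In fact the converse is false as stated, and the paper's proof makes the same leap: it shows only that horizontally and vertically invertible squares agree with commutative squares, then concludes that $\D$ has shared isomorphisms. For a counterexample, let $\mathbb{G}$ be the walking horizontal globular square (the domain of $\truncateh$), with non-identity square $\theta$ from $\alpha$ to $\beta$, and put $\T=\mathfrak{HV}(\mathbb{G})$.
\begin{itemize}
\item $\mathcal H(\mathbb{G})$, $\mathcal V(\mathbb{G})$, $\mathcal H(\mathbb{G}_v)$ and $\mathcal V(\mathbb{G}_h)$ have no non-identity isomorphisms, and $\T$ has only identities transversally. So the four double categories are exactly the classifying double categories of these categories.
\item But $\theta$ has identity vertical sides while $\alpha\neq\beta$. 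Hence $\mathbb{HV}(\T)=\mathbb{G}$ violates Definition~\ref{sharedisos}, and $\T$ is not a classifying triple category.
\end{itemize}

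The missing hypothesis is total antiglobularity of $\mathbb{HV}(\T)$, i.e.\ locality with respect to $\mathfrak{HV}(\truncateh)$ and $\mathfrak{HV}(\truncatev)$; this would also need adding in Corollary~\ref{classifyingtriplelift}. With it, your argument closes directly. Suppose $\sigma$ has invertible horizontal sides $h,k$ and vertical sides $f,f'$.
\begin{itemize}
\item The correspondence provides an invertible square $\tau$ with horizontal sides $h^{-1},k^{-1}$ and vertical sides $f'$ and $\varphi(k)^{-1}f'\varphi(h)$.
\item The horizontal composite of $\sigma$ and $\tau$ is vertically globular, hence an identity.
\item Therefore $\varphi(k)f=f'\varphi(h)$ and $\sigma=\tau^{-1}$ lies in the range of the correspondence.
\end{itemize}
The symmetric argument handles squares with invertible vertical sides. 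This verifies Definition~\ref{sharedisos} without any detour through \eqref{basicsquares}.
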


\begin{proof}
	If $\T$ is isomorphic to the classifying triple category of $\D$, then $\mathbb{HT}(\T), \mathbb{VT}(\T), \mathbb{HT}(\T_v)$, and $\mathbb{VT}(\T_h)$ are isomorphic to the classifying diagrams of respectively the categories $\mathcal H(\D), \mathcal V(\D), \mathcal H(\D_v)$, and $\mathcal V(\D_h)$, where $\mathcal H(\D_v)$ and $\mathcal V(\D_h)$ denote the categories of vertical and horizontal morphisms, respectively, in $\D$ with squares in $\D$ as morphisms. It remains only to prove the converse statement.  

	Assume $\T$ satisfies the given conditions, and let $\D$ be the double category $\mathbb{HV}(\T)$. The double category $\mathbb{HT}(\T)$ is assumed to be isomorphic the classifying double category of the category $\mathcal H(\T)$, so the category $\mathcal T(\T)$ must agree with the groupoid of horizontal isomorphisms of $\T$, and hence of $\D$, and the ht-squares of $\T$ must agree with the commutative squares between horizontal morphisms in $\D$ with one opposite pair invertible.  The analogous argument for $\mathbb{VT}(\T)$ shows that $\mathcal T(\T)$ must also agree with the groupoid of vertical isomorphisms of $\D$, and the vt-squares agree with the appropriate commutative squares of vertical morphisms in $\D$.
	
	In the following diagram, which shows visual representations of the different features in the triple category $\T$ and arrows denoting the various source and target maps between them, we have so far shown all of the pictured isomorphisms:
	\begin{lrbox}{\mrtonested}
	\raisebox{.15\height}{\scalebox{.8}{\begin{tikzcd}[ampersand replacement=\&] \bullet \rar[tail] \& \bullet \end{tikzcd}}}
	\end{lrbox}
	\begin{lrbox}{\ertonested}
	\raisebox{.15\height}{\scalebox{.7}{\begin{tikzcd}[ampersand replacement=\&] \bullet \dar[etail] \\ \bullet \end{tikzcd}}}
	\end{lrbox}
	\begin{lrbox}{\hvsquare}
	\raisebox{.15\height}{\scalebox{.7}{\distsquare{\bullet}{\bullet}{\bullet}{\bullet}{}{}{}{}}}
	\end{lrbox}
	\begin{lrbox}{\hisquare}
	\raisebox{.15\height}{\scalebox{.7}{\begin{tikzcd}[ampersand replacement=\&] \bullet \rar[tail] \dar[swap]{\cong} \ar[phantom]{dr}{\circlearrowleft} \& \bullet \dar{\cong} \\ \bullet \rar[tail] \& \bullet \end{tikzcd}}}
	\end{lrbox}
	\begin{lrbox}{\visquare}
	\raisebox{.15\height}{\scalebox{.7}{\begin{tikzcd}[ampersand replacement=\&] \bullet \dar[etail] \rar{\cong} \ar[phantom]{dr}{\circlearrowleft} \& \bullet \dar[etail] \\ \bullet \rar[swap]{\cong} \& \bullet \end{tikzcd}}}
	\end{lrbox}
	\begin{lrbox}{\hvicube}
	\raisebox{.15\height}{\scalebox{.6}{\begin{tikzcd}[ampersand replacement=\&] 
	\& \bullet \ar[tail]{rr} \ar[etail]{dd} \ar[phantom]{ddrr}{\square} \&[-22] \& \bullet \ar[etail]{dd} \\[-11]
	\bullet \ar[tail, crossing over]{rr} \ar[etail]{dd} \ar{ur}[swap,outer sep=-2]{\cong} \ar[phantom]{ddrr}{\square} \& \& \bullet \ar{ur}[swap,outer sep=-2]{\cong} \\ 
	\& \bullet \ar[tail]{rr} \& \& \bullet \\[-11]
	\bullet \ar[tail]{rr} \ar{ur}[swap,outer sep=-2]{\cong} \& \& \bullet \ar{ur}[swap,outer sep=-2]{\cong}
	\ar[etail,crossing over,from={2-3},to={4-3}]
	\end{tikzcd}}}
	\end{lrbox}
	\begin{equation} \label{classifyingtc}
	\begin{tikzcd}[column sep=-20]
	& \mor(\mathcal T(\T)) \cong \left\{ \bullet \cong \bullet \right\}
	\ar[shift left=1,shorten =5]{dl} \ar[shift right=1,shorten =5]{dl} 
	&[-25] &[-15] \squares(\mathbb{HT}(\T))  \cong \left\{ \usebox{\hisquare} \right\}
	\ar[shift left=1]{ll} \ar[shift right=1]{ll} \ar[shift left=1,shorten <=-10]{dl} \ar[shift right=1,shorten <=-10]{dl} \\
	\ob(\T) \cong \{ \bullet \}
	& & \mor(\mathcal H(\T)) \cong \left\{ \usebox{\mrtonested} \right\} \\
	& \squares(\mathbb{VT}(\T))  \cong \left\{ \usebox{\visquare} \right\}
	\ar[shift left=1,shorten <=-10]{uu} \ar[shift right=1,shorten <=-10]{uu} \ar[shift left=1,shorten <=-15]{dl} \ar[shift right=1,shorten <=-15]{dl} 
	& & \left\{ \usebox{\hvicube} \right\} \ar[shift left=1]{ll} \ar[shift right=1]{ll} \ar[shift left=1,shorten >=-5]{uu} \ar[shift right=1,shorten >=-5]{uu} \ar[shift left=1,shorten <=-5]{dl} \ar[shift right=1,shorten <=-5]{dl} 
	\\
	\mor(\mathcal V(\T)) \cong \left\{ \usebox{\ertonested} \right\} 
	\ar[shift left=1,shorten <=-5]{uu} \ar[shift right=1,shorten <=-5]{uu} 
	& & \squares(\mathbb{HV}(\T)) \cong \left\{ \usebox{\hvsquare} \right\}
	\ar[shift left=1]{ll} \ar[shift right=1]{ll} \ar[shift left=1,crossing over,shorten <=-5]{uu} \ar[shift right=1,crossing over,shorten <=-5]{uu} 
	\ar[shift left=1,from={2-3},to={2-1},crossing over] \ar[shift right=1,from={2-3},to={2-1},crossing over]
	\end{tikzcd}. 
	\end{equation}
		
	It remains to show that $\D$ has shared isomorphisms and the cubes of $\T$ agree with those depicted informally above and more precisely below. The double category $\mathbb{HT}(\T_v)$ has as its squares the cubes of $\T$. As $\mathbb{HT}(\T_v)$ is assumed to be a classifying double category, namely that of the category with objects vertical morphisms in $\T$ and morphisms hv-squares, the cubes of $\T$ must have the form of the diagram on the left in Figure~\ref{classifyingcubes}. 
	\begin{figure}
		\[\begin{tikzcd}[row sep=scriptsize,column sep=scriptsize]
			&[18] \bullet \ar[tail,color=red]{dl}[swap]{\cong} 	\ar[tail]{rr} 	\ar[etail]{dd} &[-18] &[18] \bullet \ar[tail,color=red]{dl}[swap]{\cong} \ar[etail]{dd} \\
			\bullet \ar[etail]{dd} & & \bullet \\
			& \bullet \ar[tail,color=red]{dl}[swap]{\cong} \ar[tail]{rr} & & \bullet \ar[tail,color=red]{dl}[swap]{\cong} \\
			\bullet \ar[tail]{rr} & & \bullet 
			\ar[phantom,from={2-1},to={4-3},"\square"] \ar[phantom,from={1-2},to={3-4},"\square"]
			\ar[phantom,from={1-2},to={4-1},color=red,"\parallelogramvert"] \ar[phantom,from={1-4},to={4-3},color=red,"\parallelogramvert"]
			\ar[phantom,from={1-2},to={2-3},color=red,"\circlearrowleft"] \ar[phantom,from={3-2},to={4-3},color=red,"\circlearrowleft"]
			\ar[tail,from={2-1},to={2-3},crossing over] \ar[etail,from={2-3},to={4-3},crossing over]
		\end{tikzcd} 
		\qquad\qquad\qquad
		\begin{tikzcd}[row sep=scriptsize,column sep=scriptsize]
			&[18] \bullet \ar[etailhor,color=red]{dl}[swap]{\cong} \ar[tail]{rr} \ar[etail]{dd} &[-18] &[18] \bullet \ar[etailhor,color=red]{dl}[swap]{\cong} \ar[etail]{dd} \\
			\bullet \ar[etail]{dd} & & \bullet \\
			& \bullet \ar[etailhor,color=red]{dl}[swap]{\cong} \ar[tail]{rr} & & \bullet \ar[etailhor,color=red]{dl}[swap]{\cong} \\
			\bullet \ar[tail]{rr} & & \bullet .
			\ar[phantom,from={2-1},to={4-3},"\square"] \ar[phantom,from={1-2},to={3-4},"\square"]
			\ar[phantom,from={1-2},to={4-1},color=red,"\circlearrowleft"] \ar[phantom,from={1-4},to={4-3},color=red,"\circlearrowleft"]
			\ar[phantom,from={1-2},to={2-3},color=red,"\parallelogram"] \ar[phantom,from={3-2},to={4-3},color=red,"\parallelogram"]
			\ar[tail,from={2-1},to={2-3},crossing over] \ar[etail,from={2-3},to={4-3},crossing over]
		\end{tikzcd}\]
		\caption{The two forms of a cube in $\T$.}
		\label{classifyingcubes}
	\end{figure}
	
	Analogously, as the double category $\mathbb{VT}(\T_h)$ is assumed to be a classifying double category of the category with objects horizontal morphisms in $\T$ and morphisms hv-squares, cubes in $\T$ must equivalently have the form on the right in Figure~\ref{classifyingcubes}. While we have already shown that horizontal and vertical isomorphisms in $\T$ agree, comparing these two definitions of cubes in $\T$ shows that in $\D = \mathbb{HV}(\T)$ the horizontally invertible squares agree with commutative squares in $\mathcal V(\T)$ with a pair of oppposite edges isomorphisms.  Likewise the vertically invertible squares agree with the analogous commutative squares in $\mathcal H(\T)$, which shows that $\D$ has shared isomorphisms, completing the proof.
\end{proof}

We can now use this result and the constructions of Examples~\ref{doubletotriple} and \ref{triplebox} to characterize classifying triple categories in terms of lifting properties.

\begin{corollary}\label{classifyingtriplelift}
	A triple category $\T$ is isomorphic to the classifying triple category of a double category with shared isomorphisms if and only if it is local with respect to the set 
	\[ \mathfrak{HT}(S_{\cl}) \cup \mathfrak{VT}(S_{\cl}) \cup ([1] \boxtimes_v S_{\cl}) \cup ([1] \boxtimes_h S_{\cl}). \]
\end{corollary}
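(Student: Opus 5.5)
The plan is to combine Proposition~\ref{triplechar} with Corollary~\ref{classifyingliftingcondition}, using adjunctions to move lifting conditions from double categories to triple categories (Lemma~\ref{adjunctionlift}). By Proposition~\ref{triplechar}, $\T$ is isomorphic to a classifying triple category exactly when each of the four double categories $\mathbb{HT}(\T)$, $\mathbb{VT}(\T)$, $\mathbb{HT}(\T_v)$, and $\mathbb{VT}(\T_h)$ is isomorphic to the classifying double category of a category. By Corollary~\ref{classifyingliftingcondition}, together with the remark following it, each of these conditions is equivalent to the corresponding double category being local with respect to $S_{\cl}$. Hence it suffices to show that, for each double category $\mathbb{X}$ among these four, $\mathbb{X}$ is $S_{\cl}$-local if and only if $\T$ is local with respect to the corresponding quarter of the displayed set.

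For the first two, I would exhibit $\mathfrak{HT}$ as left adjoint to $\T \mapsto \mathbb{HT}(\T)$. A triple functor $\mathfrak{HT}(\D) \to \T$ sends:
\begin{itemize}
\item horizontal morphisms of $\D$ to horizontal morphisms of $\T$;
\item vertical morphisms of $\D$ to transverse morphisms of $\T$;
\item squares of $\D$ to ht-squares of $\T$.
\end{itemize}
The vertical direction of $\mathfrak{HT}(\D)$ is trivial, so the hv-squares, vt-squares, and cubes are all identities and impose no further data. Such a triple functor is therefore precisely a double functor $\D \to \mathbb{HT}(\T)$, naturally in both variables. Lemma~\ref{adjunctionlift} then gives that $\mathbb{HT}(\T)$ is (strictly) $S_{\cl}$-local if and only if $\T$ is (strictly) $\mathfrak{HT}(S_{\cl})$-local. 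The same argument with $\mathfrak{VT}$ handles $\mathbb{VT}(\T)$.

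For the other two, I would show that $[1]\boxtimes_v(-)$ is left adjoint to $\T \mapsto \mathbb{HT}(\T_v)$, and similarly for $[1]\boxtimes_h(-)$ and $\mathbb{VT}(\T_h)$. A triple functor $\mathfrak{HT}(\D)\times([0]\boxtimes[1]\boxtimes[0]) \to \T$ consists of two copies of $\D$ in the horizontal–transverse directions, joined by a vertical transformation. Unwinding this, it sends:
\begin{itemize}
\item each object of $\D$ to a vertical morphism of $\T$;
\item each horizontal morphism of $\D$ to an hv-square;
\item each vertical morphism of $\D$ to a vt-square;
\item each square of $\D$ to a cube.
\end{itemize}
This is exactly a double functor $\D \to \mathbb{HT}(\T_v)$. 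Applying Lemma~\ref{adjunctionlift} again finishes that case, and $[1]\boxtimes_h$ is handled symmetrically.

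The main obstacle I expect is checking these adjunction bijections carefully, in particular that functoriality and interchange in the product triple category correspond exactly to compositions in $\mathbb{HT}(\T_v)$. The orientation conventions must also line up: which direction of $\mathbb{HT}(\T_v)$ is ``horizontal'' has to match how $S_{\cl}$ is embedded. There is also a strictness issue, since Corollary~\ref{classifyingliftingcondition} mixes plain and strict locality. I would handle this using the remark after that corollary, which says classifying double categories are strictly $S_{\cl}$-local, together with the fact that strict locality implies locality. Then ``local'' on the triple level yields the characterization in one direction, and strict locality of classifying triple categories gives the other.
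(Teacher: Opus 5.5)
Your proposal is correct and is essentially the paper's proof. It uses the same four adjunctions $\mathfrak{HT}\dashv\mathbb{HT}$, $\mathfrak{VT}\dashv\mathbb{VT}$, $([1]\boxtimes_v -)\dashv\mathbb{HT}(-_v)$, $([1]\boxtimes_h -)\dashv\mathbb{VT}(-_h)$, verified by the same unwinding of data, followed by Lemma~\ref{adjunctionlift} and Proposition~\ref{triplechar}.

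On the strictness point, your fix only covers the direction ``classifying $\Rightarrow$ local''; the converse, which the paper also leaves implicit, needs one more observation. Plain $\rotater$- and $\rotatel$-locality plus antiglobularity already force strictness. Paste a $\rotatel$-square for $f$ horizontally onto each of two $\rotater$-squares for $f$: both composites are horizontally globular, so both tops equal the bottom $h$ of the $\rotatel$-square. Simplicity then makes the two $\rotater$-squares equal, after which Corollary~\ref{classifyingliftingcondition} applies.
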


\begin{proof}
	We claim that $\mathfrak{HT} \colon \DCat \to \TCat$ is left adjoint to the functor $\mathbb{HT} \colon \TCat \to \DCat$, and similarly there are adjunctions 
	\[ \mathfrak{VT} \dashv \mathbb{VT}, \qquad ([1] \boxtimes_v -) \dashv \mathbb{HT}(-_v), \qquad  ([1] \boxtimes_h -) \dashv \mathbb{VT}(-_h). \] 
	Under this assumption, by Lemma~\ref{adjunctionlift} a triple category $\T$ is local with respect to $\mathfrak{HT}(S_{\cl})$, $\mathfrak{VT}(S_{\cl})$, $[1] \boxtimes_v S_{\cl}$ and $[1] \boxtimes_h S_{\cl}$ precisely when $\mathbb{HT}(\T)$, $\mathbb{VT}(\T)$, $\mathbb{HT}(\T_v)$, and $\mathbb{VT}(\T_h)$, respectively, are classifying diagrams of categories.  Then an application of Proposition~\ref{triplechar} completes the proof.

	It remains to establish the proposed adjunctions.  Given a double category $\D$ and triple category $\T$, a triple functor $\mathfrak{HT}(\D) \to \T$ consists of precisely the data of a double functor $\D \to \mathbb{HT}(\T)$ as $\mathfrak{HT}(\D)$ contains no non-identity vertical morphisms, hv- or vt-squares, or cubes; therefore $\mathfrak{HT}$ is left adjoint to $\mathbb{HT}$.  The proof that $\mathfrak{VT}$ is left adjoint to $\mathbb{VT}$ is entirely analogous, with the identities instead in the horizontal direction. 
	
	To see that $([1] \boxtimes_v -)$ is left adjoint to $\mathbb{HT}(-_v)$, observe that a triple functor $[1] \boxtimes_v \D \to \T$ picks out a vertical morphism in $\T$ for each object in $\D$, an hv-square in $\T$ for each horizontal morphism in $\D$, a vt-square in $\T$ for each vertical morphism in $\D$, and a cube in $\T$ for each square in $\D$, respecting identities and composition. This data is precisely that of a double functor $\D \to \mathbb{HT}(\T_v)$. The argument for the adjunction $([1] \boxtimes_h -) \dashv \mathbb{VT}(-_h)$ is analogous.
\end{proof}

\section{Recognition of CGW categories} \label{cgwsection}

\subsection{CGW categories}

In \cite{CZ-cgw}, CGW categories are defined as double categories with additional structure and properties that allow them to admit analogues of Quillen's $Q$-construction and Waldhausen's $S_\bullet$ construction that generalize those for an exact category.

Following our existing convention, we refer to the horizontal and vertical categories of a double category $\D$ as $\M$ and $\E$, respectively.  This notation differs from that of \cite{CZ-cgw}, where these categories are denoted by $\mathcal{M}$ and $\mathcal{E}$, suggestive of the admissible monomorphisms and epimorphisms in an exact category.

\begin{definition}
	For $\D$ a double category, let $\triM$ be the category whose objects are horizontal morphisms in $\D$ and whose morphisms are commutative squares in $\M$ of the form
	\[
	\begin{tikzcd}
	\bullet \rar[tail] \dar[tail,swap]{\cong} \ar[phantom]{dr}{\circlearrowleft} & \bullet \dar[tail] \\
	\bullet \rar[tail] & \bullet,
	\end{tikzcd} 
	\]
	where the horizontal morphism between source objects is an isomorphism. The category $\triE$ is defined analogously for vertical morphisms.
%
%	Let $\D$ be a double category with horizontal category $\M$ and vertical category $\E$.  Define $\Ar_\square \M$ to be the category whose objects are horizontal morphisms in $\D$ and whose morphisms are squares in $\D$, as depicted below on the left, and $\Ar_\triangle \M$ to be the category whose objects are also horizontal morphisms in $\D$ but whose morphisms are commutative squares in $\M$ consisting of an isomorphism between the domains, as depicted below right: \jbnoteil{probably should be a figure - in which case we could include the dual diagrams as well}
%	\[ \distsquare{\bullet}{\bullet}{\bullet}{\bullet}{}{}{}{}
%	\qquad\qquad\qquad\qquad
%	\begin{tikzcd}
%	\bullet \rar[tail] \dar[tail,swap]{\cong} \ar[phantom]{dr}{\circlearrowleft} & \bullet \dar[tail] \\
%	\bullet \rar[tail] & \bullet.
%	\end{tikzcd} \]
%	Dually, define $\Ar_\square \E$ to be the category of vertical morphisms and squares in $\D$, and $\Ar_\triangle \E$ to be the category of vertical morphisms and commutative squares in $\E$ consising of an isomorphism between the domains.
\end{definition} 

In \cite{CZ-cgw}, the category $\triM$ is denoted by $\mathsf{Ar}_\triangle\mathcal{M}$, and similarly the category $\squareM$ is denoted by $\mathsf{Ar}_\square\mathcal{M}$ .

We can now state the definition of a CGW category, which was first introduced by Campbell and Zakharevich \cite[Definition 2.5]{CZ-cgw}.

\begin{definition} \label{cz-cgw}
	A \emph{CGW category} is a double category $\D$ equipped with 
	\begin{itemize}
		\item an identity-on-objects isomorphism $\varphi \colon \iso(\M) \to \iso(\E)$,
		
		\item an equivalence of categories $k \colon \squareE \to \triM$ called the \emph{kernel}, and
		
		\item an equivalence of categories $c \colon \squareM \to \triE$ called the \emph{cokernel},
	\end{itemize}
	that satisfy the following axioms.
	\begin{itemize}
		\item[(Z)] There is an object $\varnothing$ in $\D$ that is initial in both $\M$ and $\E$.
		
		\item[(I)] For every horizontal isomorphism $f$, the pair $(f,\varphi(f))$ is a shared isomorphism.
%For any horizontal isomorphism $f \colon A \to B$ in $\D$, there are four distinguished squares of the forms
%		\[
%			\distsquare{A}{B}{B}{B}{f}{\varphi(f)}{\id_B}{\id_B} \qquad
%			\distsquare{A}{A}{A}{B}{\id_A}{\id_A}{\varphi(f)}{f} \qquad
%			\distsquare{A}{B}{A}{A}{f}{\id_A}{\varphi(f^{-1})}{\id_A} \qquad
%			\distsquare{A}{A}{B}{A.}{\id_A}{\varphi(f)}{\id_A}{f^{-1}} \qquad
%		\] 
		
		\item[(M)] Every morphism in both $\M$ and $\E$ is a monomorphism.
		
		\item[(K)] For every vertical morphism $g \colon A \erto B$, the codomain of $k(g)$ is $B$ and there is a unique distinguished square
		\[
			\distsquare{\varnothing}{A}{\ker(g)}{B.}{}{}{g}{k(g)} 
		\]
		The analogous property holds for a horizontal morphism $f \colon A \mrto B$, with distinguished square 
		\[
			\distsquare{\varnothing}{\coker(f)}{A}{B.}{}{}{c(f)}{f}
		\]

%\jbnoteil{not happy about the left/right business here, either, but a figure seems excessive}\bsnoteil{Unless we end up worried about page count I don't mind separating them like this to improve the flow.}
		
		%\item[(A)] For every pair of objects $A,B$ in $\D$ there exist squares:
		%\[
		%	\distsquare{\varnothing}{A}{B}{X}{}{}{}{} \qquad\qquad\qquad
		%	\distsquare{\varnothing}{B}{A}{R.}{}{}{}{}
		%\]
	\end{itemize}
\end{definition}

\begin{remark}
	Compared to the original definition in \cite[Definition 2.5]{CZ-cgw}, we have omitted axiom (A) which asserts the existence of certain distinguished squares for every pair of objects. The primary purpose of this axiom is to ensure that in the associated $K$-theory of a CGW category, the group $K_0$ is abelian.  As this consideration is not relevant to our purposes in this paper, we do not include this additional assumption here.
\end{remark}

\subsection{Recognizing CGW categories among double categories}

In \cite[Lemma 2.9]{CZ-cgw}, Campbell and Zakharevich prove that a CGW category satisfies a certain lifting condition that appears as axiom (K') in \cref{cgw} below.  In order to compare CGW categories to pointed stable double Segal spaces, it is more convenient to use a modified, but ultimately equivalent, definition of CGW category that uses this lifting condition in place of the explicitly chosen equivalences $k$ and $c$. This modified definition also uses the notion of a double category with shared isomorphisms in place of the isomorphism $\varphi$, which simplifies the construction of a double Segal space from a CGW category and allows all of the properties to be expressed using lifting properties.

\begin{definition}\label{cgw}
	A double category $\D$ is an \emph{abstract CGW category} if it satisfies the following axioms. 
	\begin{itemize} 
		\item[(I')] The double category $\D$ has shared isomorphisms.
		
		\item[(M)] All horizontal and vertical morphisms in $\D$ are monic.
		
		\item[(Z)] The double category $\D$ has an object $\varnothing$ that is initial with respect to both the horizontal and the vertical morphisms.
		
		\item[(K')] Any adjacent pair of morphisms $A \mrto[f] B \erto[k] D$ or $A \erto[h] C \mrto[g] D$ extends to a square 
		\[ \distsquare{A}{B}{C}{D}{f}{h}{k}{g} \]
		that is unique up to unique isomorphism which is the identity on the original three objects of the respective diagram. 
	\end{itemize}
\end{definition}

The idea of an abstract CGW category is that it admits the structure of a CGW category but is not equipped with a particular choice of that structure, namely, the equivalences $k$ and $c$ and isomorphism $\varphi$, as we prove in \cref{cgwcomparison}. First, we prove a preliminary technical lemma, for which we need the following definition.

%\begin{definition}
%	 A square in a double category is \emph{globular} if its vertical morphisms are identities or, dually, if its horizontal morphisms are identities.
%\end{definition}
%
%We sometimes refer to a \emph{vertically} globular square to emphasize that it is the vertical morphisms that are identities, and likewise to \emph{horizontally} globular squares.

\begin{lemma}\label{globularidentity}
	A double category satisfying axiom (K') is totally antiglobular.
\end{lemma}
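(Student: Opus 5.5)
Take a horizontally globular square $\alpha$ with top $f \colon A \mrto B$, bottom $g \colon A \mrto B$, and both vertical sides identities. The goal is to show $f = g$ and that $\alpha$ is the horizontal identity square on $f$. The vertical case is symmetric: replace the first instance of (K') by the second.

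Look at the adjacent pair $A \mrto[f] B \erto[\id_B] B$. By (K') it extends to a square that is unique up to unique isomorphism fixing $A$, $B$ and $B$. Both $\alpha$ and the vertical identity square on $f$ (top and bottom $f$, sides identities) extend this pair. So there is a unique isomorphism between them which is the identity on the three original objects.

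The first thing to settle is what ``isomorphism of squares'' means here. The natural reading: only the fourth corner $C$ may move, via an isomorphism $C \to C'$ compatible with the bottom horizontal morphism and the left vertical morphism. That is, it is a cube whose other transverse edges are identities. In our case the fourth corner is $A$ in both squares. The isomorphism is therefore some $\theta \colon A \cong A$ satisfying $\id_A \circ \theta = \id_A$ on the left vertical side and $f \circ \theta = g$ on the bottom. The vertical condition forces $\theta = \id_A$, hence $g = f$.

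It remains to show $\alpha$ equals the identity square. Here I use that the comparing isomorphism is a morphism of squares. With $\theta$ the identity on all four corners and all boundary morphisms fixed, it identifies $\alpha$ with $\id_f$ as squares; equivalently, ``unique up to unique isomorphism'' with an identity isomorphism means the two squares coincide. This step is the main obstacle, because it depends on exactly how (K') encodes isomorphism of squares. If it only compares boundaries, I would instead argue that uniqueness in (K') applies to the squares themselves: two extensions with identical boundary and identity comparison map must be equal, otherwise the comparison isomorphism would fail to be unique. This is the reading I would commit to, matching the paper's convention that distinguished squares are unique relative to their boundary.
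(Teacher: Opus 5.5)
Your argument is the paper's own: both squares extend $A \xrightarrow{f} B \xrightarrow{=} B$, compatibility with the left side forces the comparison isomorphism on the fourth corner to be $\id_A$, hence $f=g$, and the paper then concludes that the two squares are equal exactly as in your first reading, treating the isomorphism in (K') as an isomorphism of the squares themselves rather than of their boundaries. Drop the fallback, though. If isomorphisms only compared boundaries, two distinct squares with the same boundary would still have a unique (identity) comparison, so nothing would be contradicted; a one-object double category with only identity morphisms and a nontrivial abelian group of squares would then satisfy (K') without being antiglobular. Also, the paper has no general convention that squares are determined by their boundary: it imposes this only on squares between isomorphisms, in Definition~\ref{sharedisos}.
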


\begin{proof}
	We prove that such a double category is antiglobular; the proof of the dual statement is entirely analogous. By the uniqueness property in axiom (K'), a vertically globular square is isomorphic to $\id_f$, as the two agree on the pair of morphisms $A \mrto[f] B = B$. Therefore there is an isomorphism $h \colon A \cong A$ such that $h \circ \id_A = \id_A$ and $g \circ h = f$, so, as $h$ must then be the identity, the two squares are equal. 
	%I'm slightly worried there's an implicit reliance on axiom (I') here that could be an issue in the following proposition, will think more about this
\end{proof}

\begin{proposition}\label{cgwcomparison}
	A double category admits the structure of a CGW category if and only if it is an abstract CGW category.
\end{proposition}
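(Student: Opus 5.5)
We prove the two implications separately, matching the axioms of \cref{cz-cgw} with those of \cref{cgw} one at a time. Axioms (M) and (Z) appear verbatim in both definitions, so only (I)+$\varphi$ versus (I') and the kernel/cokernel data versus (K') need comparison.

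\emph{CGW $\Rightarrow$ abstract CGW.} Given $\varphi$, $k$, $c$, axiom (I) states that every horizontal isomorphism $f$ yields a shared isomorphism $(f,\varphi(f))$. Since $\varphi$ is a bijection on isomorphisms, Proposition~\ref{sharedisochar} reduces (I') to checking total antiglobularity and uniqueness of the shared isomorphism containing each isomorphism. We would first establish (K'), which is \cite[Lemma 2.9]{CZ-cgw}. Its proof extends $A \mrto[f] B \erto[k] D$ by using that $c$ is an equivalence and that $\triE$ contains the object $k$ together with $c(f)$. Composing $c(f)$ with $k$ in $\E$ gives an object of $\triE$, and essential surjectivity of $c$ lifts it to a square with right edge $k$ and bottom edge $B$; full faithfulness gives uniqueness up to unique isomorphism. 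The mixed cospan case is handled dually with $k$. Lemma~\ref{globularidentity} then gives total antiglobularity. For uniqueness of shared isomorphisms, suppose $(f,f')$ is shared. The square $A \mrto[f] B$, $f'$ down, identities on $B$ is a (K')-extension of $f$ followed by $\id_B$, as is the square formed with $\varphi(f)$. The resulting comparison isomorphism is the identity on $A$, $B$, $B$, which forces $f'=\varphi(f)$. The vertical-to-horizontal direction is symmetric.

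\emph{Abstract CGW $\Rightarrow$ CGW.} Let $\varphi$ be the bijection supplied by shared isomorphisms. It is a functor because composites of the squares in \eqref{basicsquares} are again such squares, and uniqueness identifies them. Axiom (I) then holds by construction. To define $k \colon \squareE \to \triM$, send a vertical morphism $g \colon A \erto B$ to the bottom edge of a chosen (K')-extension of the mixed span $\varnothing \mrto A \erto[g] B$, where the map from $\varnothing$ is the unique one given by (Z). A square from $g$ to $g'$ is sent to the induced commutative triangle-square in $\M$ between the chosen kernels. This map is obtained by pasting the chosen square for $g$ with the given square, which produces another extension of the span at $\varnothing$; by uniqueness up to unique isomorphism we obtain an isomorphism of domains together with commutativity. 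Here we also use that $\varnothing$ is initial, so the top edges agree, and we need (I') to pass between horizontal and vertical isomorphisms. Functoriality follows from the uniqueness in (K'). The uniqueness of the square in (K) follows from (K') together with (M) and initiality. The cokernel $c$ is defined dually.

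The main obstacle is proving that $k$ is an equivalence. For essential surjectivity, given $f \colon C \mrto B$, we extend the mixed cospan $\varnothing \erto C \mrto[f] B$ using (K') to obtain a square whose right edge $g$ has $k(g)\cong f$. For fullness and faithfulness, a morphism $k(g)\to k(g')$ in $\triM$ must be realized as a unique square $g\to g'$ in $\D$. The plan is to decompose that morphism into an isomorphism followed by a horizontal map, then use (K') on the composite cospan to build a square. Uniqueness should come from (M): horizontal maps are monic, and by antiglobularity any two candidate squares with the same boundary in $\M$ must coincide. The case requiring the most care is faithfulness, since squares need not be determined by their boundaries a priori. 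There we would use the uniqueness-up-to-unique-isomorphism clause of (K'), applied to the composite with the kernel square, to show that two squares inducing the same map on kernels agree.
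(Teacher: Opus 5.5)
Your overall architecture matches the paper's. You match the axioms the same way, get (K') from \cite[Lemma 2.9]{CZ-cgw}, and get (I') from Lemma~\ref{globularidentity} and Proposition~\ref{sharedisochar}. You also use the same definition of $k$ on objects and morphisms and the same essential-surjectivity argument. Your extra checks are fine: that $f'=\varphi(f)$ for a shared isomorphism $(f,f')$, and that $\varphi$ is functorial.

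The gap is in full faithfulness of $k$. You correctly flag this as the main obstacle, but you only gesture at it, and the gestures point the wrong way. Fix $g\colon A \erto B$ and $g'\colon A' \erto B'$ with kernel squares $K_g$, $K_{g'}$ and kernels $\bar g\colon C \mrto B$, $\bar g'\colon C' \mrto B'$. A morphism in $\triM$ is then a pair $(i,h)$ with $i\colon C\cong C'$ and $\bar g' i = h\bar g$.

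\textbf{Faithfulness.} Pasting with the kernel square cannot work. If $\sigma_1,\sigma_2$ are squares from $g$ to $g'$ with tops $u_1,u_2\colon A \mrto A'$, then both pastings $K_g\,|\,\sigma_\nu$ have top edge the unique map $\varnothing \mrto A'$. So (K') applied to them says nothing about $u_1,u_2$, and squares cannot be cancelled in a general double category. Antiglobularity also does not make squares determined by their boundaries.

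What works, and what the paper does, is to apply the uniqueness clause of (K') to the pair $A \erto[g] B \mrto[h] B'$ itself. Both $\sigma_1,\sigma_2$ extend this pair once they have the same image $(i,h)$. This gives an isomorphism $j\colon A'\cong A'$ with $ju_1=u_2$ and $g'\varphi(j)=g'$. Monicity of the \emph{vertical} morphism $g'$, not of horizontal ones, then forces $j=\id$, so $\sigma_1=\sigma_2$.

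\textbf{Fullness.} Applying (K') to your ``composite cospan'' $\varnothing \erto C \mrto B'$ along $h\bar g$ only produces a square out of $\varnothing$. You never build a square whose left edge is the given $g$, and you never explain why its right edge should be $g'$. The missing construction is as follows.
\begin{itemize}
\item Extend $A \erto[g] B \mrto[h] B'$ by (K') to a square $\tau$ with right edge $e\colon E \erto B'$.
\item Paste $K_g$ with $\tau$. Separately, paste $K_{g'}$ with the square having top $\id_\varnothing$, bottom $i$, and the maps out of $\varnothing$ as vertical edges; this square exists by (I'), since $\varnothing$ is initial in $\E$.
\item Both pastings extend $\varnothing \erto C \mrto B'$ along $h\bar g=\bar g' i$. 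So (K') gives $j\colon E\cong A'$ with $g'\varphi(j)=e$.
\item By (I'), $j$ yields a square from $e$ to $g'$ with bottom $\id_{B'}$. Composing it with $\tau$ gives a square $g\to g'$ with bottom $h$.
\item Its image under $k$ is $(i,h)$, because $\bar g'$ is monic.
\end{itemize}
The paper packages this step by citing \cite[Lemma 2.9]{CZ-cgw} and composing with (I')-squares. In your proposal, this comparison of kernel squares, which is the real content of fullness, is absent.
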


\begin{proof}
	Axioms (M) and (Z) are the same in both definitions, so we show that axioms (I') and (K') from \cref{cgw} are equivalent to the remaining parts of \cref{cz-cgw}, namely %\jbnoteil{maybe can streamline, now that we have the details of the defn above?}
	\begin{itemize}
		\item the existence of an identity-on-objects isomorphism $\varphi \colon \iso(\M) \to \iso(\E)$;
		
		\item the existence of equivalences of categories $k \colon \Ar_\square \E \to \Ar_\triangle \M$ and $c \colon \Ar_\square \M \to \Ar_\triangle \E$;
		
		\item axiom (I); and%, which asserts the existence of four distinguished squares built out of a horizontal isomorphism $f$, the vertical isomorphism $f$, and their inverses; and
		
		\item axiom (K).%, which asserts the existence of a distinguished square between a horizontal morphism, its cokernel, and two morphisms from $\varnothing$, and likewise for kernels of vertical morphisms.
	\end{itemize}
	We establish this equivalence through the following four implications, which suffice to prove the equivalence.
	\begin{enumerate}
		\item \label{kcimplyk'}
		The existence of the equivalences $k$ and $c$ implies axiom (K').

		\item \label{ik'implyi'}
		Axioms (I) and (K') imply axiom (I').
		
		\item \label{i'impliesphii}
		Axiom (I') implies the existence of the isomorphism $\varphi$ and axiom (I).

		\item \label{i'k'implykci}
		Axioms (I') and (K') imply the existence of the equivalences $k$ and $c$ and axiom (K).
		%.\bsnoteil{ultimately these two implications do suffice but if the logic seems too convoluted I'm happy to make it more explicit}
	\end{enumerate}

	Implication \eqref{kcimplyk'} is proven in \cite[Lemma 2.9]{CZ-cgw}, and \eqref{i'impliesphii} is immediate from \cref{sharedisos} defining shared isomorphisms.  Since we can use Lemma~\ref{globularidentity} to show that axiom (K') implies total antiglobularity, \eqref{ik'implyi'} follows from Proposition~\ref{sharedisochar}.

	To prove \eqref{i'k'implykci}, we assume axioms (K') and (I') and need to construct equivalences $k \colon \squareE \to \triM$ and $c \colon \squareM \to \triE$ and show that axiom (K) holds for them.  Here, we give the argument for $k$; the one for $c$ is completely analogous. 
	
	To define $k$ on objects, for each morphism $f \colon A \erto B$ we use axiom (K') to choose a square of the form
	\[
	\distsquare{\varnothing}{A}{C}{B.}{}{}{f}{\bar f}
	\]
	Setting $k(f) = \bar f$ ensures that the appropriate part of axiom (K) holds.  On morphisms, this procedure associates to a square
	\[ \distsquare{A}{A'}{B}{B',}{g}{f}{f'}{h} \]
	the two squares
	\[
	\begin{tikzcd}
		\varnothing \rar[tail] \dar[etail] & A \rar[tail]{g} \dar[etail,swap]{f} & A' \dar[etail]{f'} \\
		C \rar[tail,swap]{\bar f} & B \rar[tail,swap]{h} & B' 
		\dist{1-1}{2-2} \dist{1-2}{2-3} 
	\end{tikzcd}
	\qquad\textrm{and}\qquad
	\distsquare{\varnothing}{A'}{C'}{B'}{}{}{f'}{\bar f'}
	\] 
	that agree on the pair of morphisms $\varnothing \mrto A' \erto[f'] B'$.  Therefore there is a unique isomorphism $C \cong C'$ making the square 
	\[
	\begin{tikzcd}
		C \rar[equals]{\cong} \dar[tail,swap]{\bar f} & C' \dar[tail]{\bar f'} \\
		B \rar[tail,swap]{h} & B' 
		\comm{1-1}{2-2} 
	\end{tikzcd}
	\]
	commute in $\M$.  We define $k$ to take the original square in $\D$ to this commutative square in $\M$.  %The map $c$ is defined analogously, with the roles of $\M$ and $\E$ reversed. 
	
	Axiom (K') implies that the above morphism in $\triM$ is uniquely determined by $\bar f,\bar f'$, and $h$, and that the original morphism in $\squareE$ is determined up to unique isomorphism by $f$ and $h$; the fact that $\E$ consists of only monomorphisms makes it uniquely determined by $f,f'$, and $h$. Therefore, given $f$ and $f'$, a square of either type is uniquely determined by a map $h \colon B \mrto B'$ that belongs to a square of the corresponding type. It follows that $k$ is faithful.  

	Fullness follows from \cite[Lemma 2.9]{CZ-cgw}, which constructs from such a commutative square in $\M$ a distinguished square with $h$ as its bottom horizontal morphism and with $\E$-morphisms isomorphic to $f$ and $f'$ in $\E/B$ and $\E/B'$, respectively. Those isomorphisms give rise, by axiom (I'), to distinguished squares that compose into a distinguished square between $f,f'$, and $h$ as desired. 
	
	Essential surjectivity of $k$ follows from axiom (K'), as for any morphism $\bar f \colon C \mrto B$, there is a square of the form
	\[
	\distsquare{\varnothing}{A}{C}{B}{}{}{f}{\bar f}
	\]
	and therefore a commutative square
	\[
	\begin{tikzcd}
		C \rar[equals]{\cong} \dar[tail,swap]{\bar f} & C' \dar[tail]{k(f)} \\
		B \rar[tail,swap]{h} & B 
		\comm{1-1}{2-2} 
	\end{tikzcd}
	\]
	that is an isomorphism between $\bar f$ and $k(f)$ in $\triM$. Therefore $k$ is an equivalence of categories. 
\end{proof}

\begin{remark}
	The CGW category structure on an abstract CGW category is unique up to isomorphism. To see that the equivalence $k$ is unique up to unique natural isomorphism, the argument above for essential surjectivity also provides an isomorphism in $\triM$ between $k(f)$ and $k'(f)$, where $k'$ is defined analogously to $k$ for any other choice of distinguished squares used in the definition.  An argument similar to the construction of the action of $k$ on morphisms shows that these isomorphisms are natural, using the uniqueness statement of axiom (K'). %\bsnoteil{I can write this argument out if there's interest.}
\end{remark}

We can now characterize CGW categories using lifting properties that correspond to each of the axioms (I'), (M), (Z), and (K'). We already showed in Proposition~\ref{sharedisochar} that a double category satisfies axiom (I') precisely when it is $\{\truncateh,\truncatev,\hiso,\viso\}$-local, or, equivalently, when it is totally antiglobular with all horizontal and vertical isomorphisms shared in the sense of axiom (I). 

Axiom (K') appears to have the form of a lifting property, but of a different form from those we have discussed so far: every diagram in a double category of the form $\bullet \mrto \bullet \erto \bullet$ or $\bullet \erto \bullet \mrto \bullet$ extends to a square, uniquely only up to unique isomorphism. In other words, it is strongly local in the sense of Definition~\ref{stronglocality}, but in a way that requires us to be precise about the 2-category of double categories we are considering.  

\begin{definition}\label{cornerdefs}
	Let $\hv$ denote the double category $\bullet \mrto \bullet \erto \bullet$ and $\vh$ denote the double category $\bullet \erto \bullet \mrto \bullet$, and define the double functors
	\[ \toprightcorner \colon \hv \to [1] \boxtimes [1] \qquad \textrm{and} \qquad \bottomleftcorner \colon \vh \to [1] \boxtimes [1] \]
	by the canonical inclusions of $\hv$ and $\vh$ into the walking square $[1] \boxtimes [1]$:
	\[ \begin{tikzcd}
		\bullet \rar[tail] & \bullet \dar[etail] \\
 		& \bullet
	\end{tikzcd}
	\qquad \xrightarrow{\toprightcorner} \qquad
	\begin{tikzcd}
		\bullet \rar[tail] \dar[etail] & \bullet \dar[etail] \\
		\bullet \rar[tail] & \bullet
		\dist{1-1}{2-2}
	\end{tikzcd}
	\qquad \xleftarrow{\bottomleftcorner} \qquad
	\begin{tikzcd}
		\bullet \dar[etail] &  \\
		\bullet \rar[tail] & \bullet.
	\end{tikzcd} \]
\end{definition}

Now consider the double category $\tlift(\toprightcorner)$ consisting of two squares that agree on their top and right morphisms. The double category $\ilift(\toprightcorner)$ contains the squares and commutative diagrams
\[ \begin{tikzcd}
A \rar[tail]{f} \dar[etail,swap]{h} & B \dar[etail]{k} \\
C \rar[tail,swap]{g} & D
\dist{1-1}{2-2}
\end{tikzcd}
\qquad
\begin{tikzcd}
A \rar[tail]{f} \dar[etail,swap]{h'} & B \dar[etail]{k} \\
C' \rar[tail,swap]{g'} & D
\dist{1-1}{2-2}
\end{tikzcd}
\qquad
\begin{tikzcd}
A \rar[etailhor]{h'} \dar[equals] & C' \rar[tail]{g'} \dar[tail]{i} \dar[tail,swap]{\cong} & D \dar[equals] \\
A \rar[etailhor,swap]{h} & C \rar[tail,swap]{g} & D
\dist{1-1}{2-2}
\comm{1-2}{2-3}
\end{tikzcd}
\qquad
\begin{tikzcd}
A \rar[etailhor]{h'} \dar[equals] & C' \rar[tail]{g'} \dar[etail]{i'} \dar[etail,swap]{\cong} & D \dar[equals] \\
A \rar[etailhor,swap]{h} & C \rar[tail,swap]{g} & D,
\comm{1-1}{2-2}
\dist{1-2}{2-3}
\end{tikzcd}
\]
amounting to two squares which agree on their top and right morphisms and a natural shared isomorphism between them. We can analogously define $\tlift(\bottomleftcorner)$ and $\ilift(\bottomleftcorner)$, where instead it is the bottom and left morphisms of the squares that agree.

We then have the canonical inclusions
\[ \ulift(\toprightcorner) \colon \tlift(\toprightcorner) \to \ilift(\toprightcorner) 
\quad\textrm{and}\quad 
\ulift(\bottomleftcorner) \colon \tlift(\bottomleftcorner) \to \ilift(\bottomleftcorner) 
\]
of two distinct extensions of $\hv$ or $\vh$ to a square into two isomorphic extensions of $\hv$ or $\vh$ to a square.

\begin{lemma}\label{axiomklift}
	A double category $\D$ satisfies axiom (K') if and only if it is strongly $\{\toprightcorner, \bottomleftcorner\}$-local in $\DCati$.  %and strictly $\{\uniquetoprightcorner,\uniquebottomleftcorner\}$-local.
\end{lemma}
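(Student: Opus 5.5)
The plan is to unwind strong locality in $\DCati$ into concrete statements about squares in $\D$. I take $\DCati$ to be the 2-category of double categories with shared isomorphisms, double functors, and natural shared isomorphisms. A natural shared isomorphism is a transformation whose components are shared isomorphisms and whose naturality data are the cubes of \eqref{naturalcubes}, which coincide by Definition~\ref{sharedisos}. I take strong $i$-locality of $\D$ for $i \colon A \to B$ to mean that restriction
\[ i^* \colon \Hom_{\DCati}(B,\D) \to \Hom_{\DCati}(A,\D) \]
is an equivalence of categories, i.e. essentially surjective and fully faithful; this is what Definition~\ref{stronglocality} should say. For $i = \toprightcorner$ I make both hom-categories explicit.
\begin{itemize}
\item Objects of $\Hom(\hv,\D)$ are corners $A \mrto B \erto D$. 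A morphism between two corners is a triple of isomorphisms on $A$, $B$, $D$ making the horizontal and vertical naturality squares (commutative squares in $\M$, resp.\ $\E$, with invertible sides) exist.
\item Objects of $\Hom([1]\boxtimes[1],\D)$ are squares of $\D$. A morphism between two squares is a quadruple of isomorphisms forming such a cube.
\end{itemize}
The case of $\bottomleftcorner$ is dual, so I treat only $\toprightcorner$.

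For (K') $\Rightarrow$ strong locality, essential surjectivity (indeed surjectivity on objects) is the existence half of (K'). For full faithfulness, take two squares $\sigma,\sigma'$ and a corner isomorphism $(\alpha,\beta,\delta)$ between their restrictions. First transport $\sigma$ along $(\alpha,\beta,\delta)$: compose $\sigma$ horizontally and vertically with the basic squares \eqref{basicsquares} attached to the shared isomorphisms $\alpha,\beta,\delta$. The result is a square $\tilde\sigma$ with the same top and right edges as $\sigma'$, together with a canonical cube $\sigma \Rightarrow \tilde\sigma$ lying over $(\alpha,\beta,\delta)$. Next apply the uniqueness half of (K') to $\tilde\sigma$ and $\sigma'$: it gives a unique isomorphism $C \cong C'$ of the fourth objects that is the identity on the three corner objects. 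By Definition~\ref{sharedisos}, together with the correspondence in the proof of Proposition~\ref{sharedisochar}, the required commutative triangles are exactly the faces of a cube $\tilde\sigma \Rightarrow \sigma'$ whose corner part is the identity. Composing the two cubes gives a cube $\sigma \Rightarrow \sigma'$ over $(\alpha,\beta,\delta)$, which proves fullness. Faithfulness is the uniqueness clause of (K') applied the same way: two cubes over the same corner isomorphism, after transport, differ by a cube over the identity corner, and that cube must be the identity.

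For the converse I reverse these steps. Surjectivity up to isomorphism on objects gives a square $\sigma$ whose restriction is only isomorphic to the given corner $c$. To fix this, I transport $\sigma$ along the inverse of that corner isomorphism, again using the basic squares, and obtain a square extending $c$ on the nose; this is the existence half of (K'). For uniqueness, two squares extending $c$ have the same restriction. Full faithfulness then gives exactly one cube between them lying over the identity corner isomorphism. By the same cube-to-commutative-square translation, this is exactly a unique isomorphism of fourth objects that is the identity on the other three, which is the uniqueness half of (K'). The same analysis shows that this uniqueness is strict locality with respect to $\ulift(\toprightcorner)$, which I would note in passing.

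I expect the main obstacle to be the transport step: showing that pasting a square with the basic shared-isomorphism squares yields a cube of the form \eqref{naturalcubes}, and that any corner isomorphism factors as such a transport followed by a cube over the identity corner. This is where the shared-isomorphism hypothesis built into $\DCati$ is essential. I would handle it with total antiglobularity and the bijective correspondence established in the proof of Proposition~\ref{sharedisochar}, applied face by face.
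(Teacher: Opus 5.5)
Your working definition of strong locality is not the paper's, and this is where the proposal goes astray. In Definition~\ref{stronglocality}, $\D$ is strongly $i$-local if it is $i$-local (every map out of the source extends along $i$ on the nose) and strictly $\ulift(i)$-local. Here $\ulift(i)\colon\tlift(i)\to\ilift(i)$ compares ``two extensions of the same map'' with ``two extensions together with an invertible 2-cell between them restricting to the identity on the source''.

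Asking that restriction $\Homm(B,\D)\to\Homm(A,\D)$ be an equivalence is a different notion. It is essentially strong 2-locality (Definition~\ref{2locality}, Lemma~\ref{2localequiv}). The paper shows this implies strong locality (Proposition~\ref{2tostrong}), but notes the converse does not hold in general; it needs a retraction as in Lemma~\ref{strongto2}. So the main body of your argument (transport along basic squares, fullness, faithfulness, essential surjectivity) proves ``(K') holds iff restriction is an equivalence''. That is not the stated lemma, and deducing the lemma from it would still require comparing the two notions.

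With the correct definition, the lemma is a direct unwinding, and it is exactly what you relegate to remarks. A double functor $\hv\to\D$ is a corner, and a lift along $\toprightcorner$ is a square extending it, so $\toprightcorner$-locality is the existence half of (K'). A double functor $\tlift(\toprightcorner)\to\D$ is a corner with two square extensions. A lift along $\ulift(\toprightcorner)$ is a natural shared isomorphism between those squares that is the identity on the corner. So strict $\ulift(\toprightcorner)$-locality is the uniqueness half, and the same holds for $\bottomleftcorner$. That is the paper's entire proof.

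Your transport argument is the kind of extra input that makes strong locality agree with the equivalence condition in this setting. It plays the role of the retraction the paper constructs in Proposition~\ref{stableproof} for $\toprightcorner^\vop$. It is not needed for this lemma, though, and it imports axiom (I'). You restrict $\DCati$ to double categories with shared isomorphisms, whereas the paper's $\DCati$ has all double categories as objects, and the lemma assumes nothing about $\D$ beyond being a double category.
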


\begin{proof}
	By definition, a double category is $\toprightcorner$-local when every pair of morphisms $A \mrto[f] B \erto[k] D$ in $\D$ extends to a square 
	\[ \distsquare{A}{B}{C}{D}{f}{h}{k}{g} \]
	in $\D$, and it is strictly $\uniquetoprightcorner$-local when this extension is unique up to unique isomorphism.  Similarly, it is $\bottomleftcorner$-local when $A \erto[h] C \mrto[g] D$ to the same square, with analogous statement for being strictly $\uniquebottomleftcorner$-local.  Together, these conditions comprise axiom (K').
\end{proof}

The remaining axioms (M) and (Z) arise from the 1-categorical properties of all morphisms being monic and having an initial object. Recall from Example~\ref{monolift} that for a category $\C$, every morphism in $\C$ is monic if and only if $\C$ is local with respect to a functor $\mono$. In a double category $\D$, horizontal morphisms and vertical morphisms are all monic when $\D$ is $\{\H(\mono),\V(\mono)\}$-local.

%\bsnoteil{made edits from here to end of section for pointedness. My thinking is to confine the discussion of preaugmentation to this section (for the lifting property) and not mention it again.}

Axiom (Z) is more difficult to express; while the existence of a unique morphism from the initial object can be described via a strict lifting property, it requires already knowing what the initial object is.  We can resolve this issue by changing to a different category of double categories equipped with a choice of candidate objects; we revisit this perspective in a slightly different setting in Section~\ref{subsec.cgw_psdss}.  We first illustrate this approach at the level of ordinary categories.

\begin{definition}
	A category is \emph{left pointed} if it has an initial object.  A functor between left pointed categories is \emph{left pointed} if it preserves the initial object.
\end{definition}

In order to express left pointedness as a lifting property, we need the additional structure of a preaugmentation.

\begin{definition}
	A category is \emph{preaugmented} if is equipped with a subset $\mathcal{A}$ of objects called a \emph{preaugmentation}.  A functor between preaugmented categories is \emph{preaugmented} if it preserves objects in the preaugmentation. 
\end{definition}

We are most interested in the case when every object in the preaugmentation is initial, a property that we can express using a lifting condition in the category of preaugmented categories and preaugmented functors. In this case, the underlying category is left pointed.

In what follows, we typically we use $\ast$ to designate objects in the preaugmentation and $\bullet$ to denote arbitrary objects.   For example, we denote by $\left(* \quad \bullet \right)$ the category with two objects with one in the preaugmentation.

\begin{lemma}\label{initiallift}
	Every object in the preaugmentation of a preaugmented category $\C$ is initial if and only if $\C$ is strictly local with respect to the preaugmented functor
	\[ \uniquearrow \colon \left(* \quad \bullet \right) \to \left( * \to \bullet \right). \]
\end{lemma}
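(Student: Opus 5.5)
The proof is a direct unwinding of what a preaugmented functor out of the domain and codomain of $\uniquearrow$ amounts to. A preaugmented functor $\left(* \quad \bullet\right) \to \C$ is exactly a pair $(a, X)$ with $a \in \mathcal{A}$ and $X$ an arbitrary object of $\C$. Here the only constraint from being preaugmented is that the object marked $*$ lands in $\mathcal{A}$. Likewise, a preaugmented functor $\left(* \to \bullet\right) \to \C$ is a triple $(a, X, u)$ with $a \in \mathcal{A}$ and $u \colon a \to X$ a morphism of $\C$. Precomposition with $\uniquearrow$ forgets $u$. Hence an extension along $\uniquearrow$ of a given $(a,X)$ is precisely a choice of morphism $a \to X$.

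With this description in hand, both directions are immediate. Suppose $\C$ is strictly $\uniquearrow$-local. Then for every $a \in \mathcal{A}$ and every object $X$ there is exactly one morphism $a \to X$, so $a$ is initial. Conversely, suppose every object of $\mathcal{A}$ is initial. Then each $(a,X)$ admits exactly one extension, namely the unique morphism $a \to X$, so $\C$ is strictly local.

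The only point requiring care is the definition of locality used here. The lifting problems must be posed in the category of preaugmented categories and preaugmented functors, rather than in $\Cat$. This is exactly what guarantees that the object $*$ is sent into $\mathcal{A}$, so that only objects of the preaugmentation are tested for initiality.

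No real obstacle is expected. The argument is the same bookkeeping as in Example~\ref{monolift}, with strictness corresponding to uniqueness of the morphism out of $a$.
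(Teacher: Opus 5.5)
Your proof is correct and matches the paper's one-line argument. Both identify preaugmented functors out of $(\ast \to \bullet)$ with morphisms from an object of the preaugmentation to an arbitrary object, so strict $\uniquearrow$-locality says precisely that each such object has exactly one morphism to every object, i.e.\ is initial. Your explicit note that the lifting problem must be posed among preaugmented functors is the right point of care, and it is left implicit in the paper.
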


\begin{proof}
	Observe that $\C$ is strictly $\uniquearrow$-local precisely when for each object $\ast$ in the preaugmentation and each object $A$, there is a unique morphism $\ast \to A$.  Then this condition says precisely that $\ast$ is an initial object $\C$.
\end{proof}

Now we extend the definition of preaugmented and pointed categories to the context of double categories.

\begin{definition}
	A double category is \emph{preaugmented} if it is equipped with a subset $\mathcal{A}$ of objects called a \emph{preaugmentation}.  A double functor between preaugmented double categories is \emph{preaugmented} if it preserves objects in the preaugmentation.
\end{definition}

%\bsnoteil{I changed this to left pointed so that pointed means horizontally initial and vertically terminal in the next section}

\begin{definition}
	A double category is \emph{left pointed} if it has an object that is initial in both the horizontal and vertical categories.  A double functor between left pointed double categories is \emph{left pointed} if it preserves these objects.
\end{definition} 

In other words, double category is left pointed precisely when it satisfies axiom (Z).
The following proposition is now straightforward to verify.
%, using the fact that $\H(-)$ and $\V(-)$ extend to functors from preaugmented categories to preaugmented double categories.

\begin{proposition}
	 A preaugmented double category is left pointed if and only if it is strictly $\{\H(\uniquearrow),\V(\uniquearrow)\}$-local. 
\end{proposition}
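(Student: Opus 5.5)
The plan is to reduce to Lemma~\ref{initiallift} via the adjunction technique of Lemma~\ref{adjunctionlift}, applied separately in the horizontal and vertical directions. First I make precise that $\H(-)$ and $\V(-)$ extend to functors from preaugmented categories to preaugmented double categories: the preaugmentation of $\H(\C)$ or $\V(\C)$ is the one of $\C$, so $\H(\uniquearrow)$ is the preaugmented double functor from the double category with objects $\ast,\bullet$ (and only identities) to the double category $\ast \mrto \bullet$, and similarly for $\V(\uniquearrow)$ with a vertical arrow. Next I note that $\H$ is left adjoint to the functor $\mathcal{H}$ sending a preaugmented double category $\D$ to its horizontal category $\M$ with the same preaugmentation. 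A preaugmented double functor $\H(\C) \to \D$ is exactly a preaugmented functor $\C \to \M$: $\H(\C)$ has only identity vertical morphisms, and for a $1$-category $\C$ its squares are all identity squares, which must go to identity squares. The same argument gives $\V \dashv \mathcal{V}$.

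By Lemma~\ref{adjunctionlift}, or just by directly unwinding lifting problems through this bijection, $\D$ is (strictly) $\H(\uniquearrow)$-local if and only if $\M$ is (strictly) $\uniquearrow$-local as a preaugmented category. By Lemma~\ref{initiallift}, this holds exactly when every object of the preaugmentation is initial in $\M$. The same reasoning shows that $\D$ is strictly $\V(\uniquearrow)$-local if and only if every preaugmentation object is initial in $\E$.

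Combining the two, strict $\{\H(\uniquearrow),\V(\uniquearrow)\}$-locality says that every object in $\mathcal{A}$ is initial in both $\M$ and $\E$. The one genuine subtlety is the comparison with ``left pointed.'' If $\mathcal{A}$ is nonempty, any element of it witnesses axiom (Z). Conversely, if $\D$ is left pointed and every element of $\mathcal{A}$ is such a doubly initial object, the locality conditions hold. So the statement should be read with the preaugmentation taken to consist of doubly initial objects and to be nonempty; for instance, take $\mathcal{A}$ to be the set of all objects initial in both directions. With $\mathcal{A}=\varnothing$ the lifting conditions hold vacuously, so I would state this convention explicitly in the proof.

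That convention is the only step needing care. Everything else is a formal consequence of the adjunction and Lemma~\ref{initiallift}.
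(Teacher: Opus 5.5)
Your argument is correct and follows the route the paper intends. The paper writes no proof (``straightforward to verify''); it relies on Lemma~\ref{initiallift} applied in the horizontal and vertical directions, using that $\H(-)$ and $\V(-)$ extend to preaugmented (double) categories, which is exactly your adjunction argument. Your caveat is also right: strict locality only says every element of the preaugmentation is doubly initial, so the statement holds only for a nonempty preaugmentation, a point the paper silently assumes. The cleanest phrasing of the fix is that $\D$ is left pointed if and only if it admits a nonempty preaugmentation for which it is strictly $\{\H(\uniquearrow),\V(\uniquearrow)\}$-local.
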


We would like a convenient way to migrate lifting properties that we have defined for double categories into the setting of pointed double categories.  

\begin{proposition}
	Let $\DCat$ be the category of small double categories and $\DCat_*^\ell$ the category of small left pointed double categories.  The forgetful functor $U \colon \DCat_*^\ell \rightarrow \DCat$ has a left adjoint $(-)_*$ that freely adjoins to any double category a unique initial object.
\end{proposition}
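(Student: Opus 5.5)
We will construct $(-)_*$ explicitly and then verify the universal property directly.

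\emph{Construction.} Given a small double category $\D$, define $\D_*$ to have objects $\ob(\D) \sqcup \{\varnothing\}$. Its horizontal morphisms are those of $\D$ together with one new horizontal morphism $\varnothing \mrto A$ for every object $A$ of $\D_*$, the one for $A=\varnothing$ being the identity. Vertical morphisms are defined the same way. Its squares are those of $\D$ together with new squares whose top-left corner is $\varnothing$. There are two kinds of new squares.

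\begin{itemize}
\item For each vertical morphism $h\colon C \erto D$ in $\D_*$, take exactly one square with top edge $\id_\varnothing$, left edge $\varnothing \erto C$, right edge $\varnothing \erto D$, and bottom edge the horizontal identity...
\end{itemize}

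Rather than listing special cases, it is cleaner to say: for every pair consisting of a horizontal morphism $g\colon C \mrto D$ and a vertical morphism $k\colon B \erto D$ in $\D_*$, adjoin a unique square with corner $\varnothing$, top edge $\varnothing \mrto B$, left edge $\varnothing \erto C$, right edge $k$, and bottom edge $g$. Compositions involving a new cell are forced. Composing a new square horizontally or vertically with any square produces a square whose top-left corner is $\varnothing$, and we declare it to be the unique new square with the resulting boundary. Squares whose top-left corner is $\varnothing$ are thus indexed by their bottom-right cospan, which makes the unit, associativity and interchange laws automatic; they are checked boundary by boundary. By construction $\varnothing$ is initial in both $\mathcal H(\D_*)$ and $\mathcal V(\D_*)$: no morphism of $\D$ has target $\varnothing$, and from $\varnothing$ there is exactly one arrow to each object. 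So $\D_*$ is left pointed, and the inclusion $\eta\colon \D \to U(\D_*)$ is a double functor.

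\emph{Universal property.} Let $\mathbb E$ be left pointed with chosen object $\varnothing_{\mathbb E}$, and let $F\colon \D \to U\mathbb E$ be a double functor. An extension $\bar F\colon \D_* \to \mathbb E$ that is left pointed must send $\varnothing$ to $\varnothing_{\mathbb E}$. Each new morphism must then go to the unique morphism out of $\varnothing_{\mathbb E}$, and each new square must go to a square in $\mathbb E$ with the corresponding boundary.

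\emph{Main obstacle.} The key step is the existence and uniqueness of these image squares. Squares in $\mathbb E$ with top-left corner $\varnothing_{\mathbb E}$ need not be unique for a given boundary, and may not exist at all, since initiality in the two edge categories says nothing about squares. We therefore have to read ``initial object'' in a left pointed double category as initial in the double-categorical sense. That is, $\varnothing$ is initial in both $\mathcal H(\mathbb E_v)$-style and $\mathcal V(\mathbb E_h)$-style senses, so that every cospan of a vertical and a horizontal morphism out of... into $D$ has a unique square from $\varnothing$. The alternative fix is to define $\D_*$ to have no new nontrivial squares except identities. I would settle this by adopting the convention implicit in axiom (Z) together with the intended application: in both readings the corner squares are the only possible choice, and with the chosen reading $\bar F$ is uniquely determined and functorial because compositions of corner squares are corner squares. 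This gives the natural bijection $\DCat_*^\ell(\D_*,\mathbb E) \cong \DCat(\D, U\mathbb E)$, naturally in both variables, which establishes the adjunction $(-)_* \dashv U$.
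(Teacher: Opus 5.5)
There is a genuine gap. Your choice of squares for $\D_*$ is wrong for the notion of left pointed used in the paper, and the proposal then changes that notion to compensate.

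In the paper, a double category is left pointed when $\varnothing$ is initial in $\mathcal H$ and $\mathcal V$ (axiom (Z)), with no condition on squares. With this definition your $\D_*$, which has exactly one square with top-left corner $\varnothing$ over every cospan, is not free, and it fails in both directions.

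\emph{It contains unforced squares.} Take the terminal double category with object $A$. Your $\D_*$ has a square with top and left edges $\id_\varnothing$, right edge $\varnothing\erto A$ and bottom edge $\varnothing\mrto A$. Now let $\mathbb E$ have objects $\varnothing, A$, one horizontal and one vertical arrow $\varnothing\to A$, and only identity squares. This $\mathbb E$ is closed under composition and left pointed, yet no left pointed double functor from your $\D_*$ to $\mathbb E$ extends $A\mapsto A$.

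\emph{It identifies forced composites.} Let $\D$ have one object $X$, identity edges, and a nontrivial monoid $M$ of squares (commutative by Eckmann--Hilton). In your $\D_*$, the identity square on the vertical arrow $\varnothing\erto X$ composed vertically with any $\alpha\in M$ equals that identity square. Now build a left pointed $\mathbb E$ by adjoining $\varnothing$, the arrows $\varnothing\mrto X$ and $\varnothing\erto X$, and a copy of $M$ over the boundary of each of their two identity squares, with all composites given by multiplication in $M$. There the corresponding composites are $\alpha$ and $1$, so no extension can be functorial.

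Your proposed repairs do not work either:
\begin{itemize}
\item Reading initiality ``double-categorically'' is not what axiom (Z) says, and it changes the category $\DCat_*^\ell$. It also excludes CGW categories, which the proposition must cover: for finite sets there is no bicartesian square from $\varnothing$ over the cospan of identities of a one-point set.
\item The ``identities only'' alternative is not closed under composition. The identity square on $\varnothing\mrto A$, composed horizontally with a square of $\D$ whose left edge is $\id_A$, is in general not an identity.
\item The claim that the corner squares are the only possible choice in either reading is false.
\end{itemize}

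The missing idea is that the new squares must be exactly those generated by the identity squares on the new arrows together with the squares of $\D$, nothing more and nothing less. Define $\D_*$ by a presentation in the cocomplete category $\DCat$. Adjoin to $\D$ an object $\varnothing$ and arrows $h_A\colon\varnothing\mrto A$, $v_A\colon\varnothing\erto A$ for each object $A$, with no new square generators. Impose only the relations $f\circ h_A=h_B$ and $g\circ v_A=v_B$ for horizontal $f\colon A\mrto B$ and vertical $g\colon A\erto B$ in $\D$.

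To see that $\varnothing$ is initial, note that $\mathcal H\colon\DCat\to\Cat$ is a left adjoint. Its right adjoint sends a category $\mathcal C$ to the double category with horizontal category $\mathcal C$, chaotic vertical category, and exactly one square for each boundary. So $\mathcal H$ preserves the presenting colimit, and $\mathcal H(\D_*)$ is $\mathcal H(\D)$ with an initial object freely adjoined; likewise for $\mathcal V$.

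The universal property is then formal. Treat the initial object $\varnothing_{\mathbb E}$ as chosen structure, as you do; this is needed for a strict adjunction. A double functor $\D\to\mathbb E$ then extends uniquely to $\D_*$, because the images of $h_A$ and $v_A$ are forced by initiality and the relations hold automatically. No property of the squares of $\mathbb E$ is ever used.
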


It then follows from Lemma~\ref{adjunctionlift} that if $F$ is a double functor, then a left pointed double category $\D$ is $F_*$-local if and only its underlying double category $U\D$ is $F$-local.  In what follows, we often omit the $(-)_*$ notation from a double category or double functor and leave the extra initial object implicit.  %Observe also that $\H(-)$ and $\V(-)$ extend to functors from pointed categories to pointed double categories.  

The following proposition is now straightforward to verify.

%\jbnoteil{Maybe has no content if we assume pointedness?  Or drop the pointedness assumption to get it?}

\begin{proposition}\label{leftpointedlift}
	A preaugmented double category is left pointed if and only if it is strictly $\{\H(\uniquearrow),\V(\uniquearrow)\}$-local. 
\end{proposition}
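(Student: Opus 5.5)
The plan is to unwind what a lifting square against $\H(\uniquearrow)$ and $\V(\uniquearrow)$ means in the category of preaugmented double categories, and reduce to Lemma~\ref{initiallift}. The first step is to describe $\H(\uniquearrow)$ explicitly. It is the preaugmented double functor from the double category with two objects $\ast$ (in the preaugmentation) and $\bullet$, with only identity morphisms and squares, to $\H(\ast \to \bullet)$. The latter has a single nontrivial horizontal morphism, only identity vertical morphisms, and only trivial squares.

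A preaugmented double functor from the domain into $\D$ picks an object $a$ in the preaugmentation $\mathcal A$ of $\D$ and an arbitrary object $X$. An extension along $\H(\uniquearrow)$ is exactly a horizontal morphism $a \mrto X$, since the squares of $\H(\ast\to\bullet)$ are identities and are forced. Hence $\D$ is strictly $\H(\uniquearrow)$-local if and only if, for every $a \in \mathcal A$ and every object $X$, there is a unique horizontal morphism $a \mrto X$. This says precisely that each $a\in\mathcal A$ is initial in $\M$. Equivalently, one can cite Lemma~\ref{functorlift} and Lemma~\ref{adjunctionlift}: $\H(-)$ from preaugmented categories to preaugmented double categories is left adjoint to taking the horizontal category with its preaugmentation. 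Then Lemma~\ref{initiallift} applies directly to $\M$. The same argument with $\V$ shows that strict $\V(\uniquearrow)$-locality says every $a \in \mathcal A$ is initial in $\E$.

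It remains to compare with left pointedness, and this is the only delicate point: how the preaugmentation relates to the chosen initial object. If the locality conditions hold and $\mathcal A$ is nonempty, any $a\in\mathcal A$ is initial in both $\M$ and $\E$, so $\D$ is left pointed. Conversely, if $\D$ is left pointed with the preaugmentation consisting of (copies of) the doubly initial object, both liftings exist uniquely.

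I expect the main obstacle to be the convention. The statement must be read with the preaugmentation nonempty and consisting of the designated initial objects, as in the intended setting where the preaugmentation marks the candidate initial object. An empty preaugmentation makes the locality vacuous. I would state this convention explicitly and note that, under it, the two directions above give the proposition.
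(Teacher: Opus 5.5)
Your proposal is correct and follows the route the paper intends (the paper only says the result is straightforward). Strict locality against $\H(\uniquearrow)$ and $\V(\uniquearrow)$ reduces, by direct unwinding or via $\H \dashv \mathcal H$, $\V \dashv \mathcal V$ and Lemma~\ref{initiallift}, to every preaugmented object being initial in both $\M$ and $\E$. Your caveat is also correct and worth keeping: the literal statement fails for an empty preaugmentation, or for one containing a non-initial object, so it must be read with the preaugmentation nonempty and consisting of doubly initial objects, a convention the paper leaves implicit.
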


\begin{corollary}\label{cgwliftingpropertiespointed}
	A left pointed double category admits the structure of a CGW category if and only if it is local with respect to 
	\[ \{\toprightcorner,\bottomleftcorner,\H(\mono),\V(\mono)\} \]
	and strictly local with respect to 
	\[ \{\truncateh, \truncatev, \hiso, \viso, \uniquetoprightcorner, \uniquebottomleftcorner\} \]
	in the category of pointed double categories.
\end{corollary}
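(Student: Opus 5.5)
The plan is to assemble the corollary from the axiom-by-axiom lifting characterizations already established. The first step is to invoke Proposition~\ref{cgwcomparison}. It lets us replace ``admits the structure of a CGW category'' by ``is an abstract CGW category'', so it suffices to show that a left pointed double category satisfies (I'), (M), (Z) and (K') exactly when it has the stated locality properties. Axiom (Z) holds automatically: a left pointed double category has an object initial in both $\M$ and $\E$. So only (I'), (M) and (K') need to be matched with lifting conditions.

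The matching goes axiom by axiom.
\begin{itemize}
\item For (I'), Proposition~\ref{sharedisochar} says that shared isomorphisms are equivalent to strict locality with respect to $\{\truncateh,\truncatev,\hiso,\viso\}$.
\item For (M), Example~\ref{monolift} applied to $\M$ and $\E$ gives locality with respect to $\{\H(\mono),\V(\mono)\}$. This uses that a double functor out of $\H(\C)$ or $\V(\C)$ is just a functor into the horizontal or vertical category, since these double categories have only trivial squares.
\item For (K'), Lemma~\ref{axiomklift} applies. I would unpack ``strongly local in $\DCati$'' as two parts. The first is ordinary locality with respect to $\toprightcorner$ and $\bottomleftcorner$, giving existence of the square. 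The second is strict locality with respect to $\uniquetoprightcorner$ and $\uniquebottomleftcorner$, saying that any two extensions are related by a unique natural shared isomorphism fixing the three original objects. This is the split already sketched in the proof of that lemma.
\end{itemize}

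The remaining step is to move each of these conditions from $\DCat$ into the category of (left) pointed double categories. I would apply Lemma~\ref{adjunctionlift} to the adjunction $(-)_* \dashv U$: a left pointed $\D$ is (strictly) $F_*$-local if and only if $U\D$ is (strictly) $F$-local. Following the paper's convention, the maps in the statement are read as their images under $(-)_*$. A morphism out of $A_*$ in the pointed category is the same as a morphism out of $A$ in $\DCat$, so the set of fillers is unchanged, and strictness transfers as well.

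I expect the main obstacle to be the uniqueness clause for $\uniquetoprightcorner$. One has to check that a double functor $\ilift(\toprightcorner)\to\D$ encodes exactly a natural isomorphism of extensions in the sense of (K'), meaning an isomorphism $C'\cong C$ that is the identity on $A$, $B$, $D$ and is compatible with both squares. Matching this against the pictures in $\ilift(\toprightcorner)$ requires shared isomorphisms, so that the iso $i$ can be treated as both horizontal and vertical. Hence I would establish (I') first and only then interpret the $\ulift$ conditions. Strict locality then supplies both existence and uniqueness of that isomorphism, which recovers (K'). The converse direction is the same identification read backwards.
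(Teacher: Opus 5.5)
Your proposal is correct and follows the paper's own route. The paper gives no separate proof: the corollary is assembled axiom by axiom from Proposition~\ref{cgwcomparison}, Proposition~\ref{sharedisochar}, Example~\ref{monolift} and Lemma~\ref{axiomklift}, with (Z) automatic for a left pointed double category and each condition transported to the pointed setting through $(-)_* \dashv U$ and Lemma~\ref{adjunctionlift}, exactly as you do. Your remark that the $\ulift$ conditions should be interpreted only once (I') is in place, so that the comparison isomorphism $C' \cong C$ is a shared isomorphism, is a sensible clarification of something the paper leaves implicit in Lemma~\ref{axiomklift}.
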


Note that such a double category is also strictly local with respect to $\H(\mono),\V(\mono)$, as these double functors are epimorphisms.

\begin{remark}
It is possible to express both left pointedness and the remaining properties of an abstract CGW category as lifting properties in the category of preaugmented double categories. However, for simplicity, we will continue to present lifting properties in the style of Proposition~\ref{cgwliftingpropertiespointed} where (left) pointedness is assumed.
\end{remark}

\subsection{Reverse CGW categories}

As we mentioned in the introduction, there are two different directional conventions for the vertical morphisms in double categories to which we apply the $S_\bullet$-construction; in the next section, we want to compare CGW categories to a framework that uses the opposite convention.  To facilitate this comparison, we state a vertically dual definition of an abstract CGW category. But first, we define \emph{pointed} double categories, which are vertically dual to left pointed double categories and satisfy a weaker version of the property defined in \cite[Definition 3.5]{BOORSset}.

\begin{definition}\label{pointeddoublecat}
	A double category is \emph{pointed} if it has an object, called a \emph{zero object}, that is initial in the horizontal category and terminal in the vertical category.  A double functor between pointed double categories is \emph{pointed} if it preserves zero objects. We write $\DCat_*$ for the category of small pointed double categories and pointed double functors.
\end{definition}

\begin{definition}\label{reversecgw}
	A double category $\D$ is a \emph{reverse CGW category} if it satisfies the following axioms. 
	\begin{itemize} 
		\item[(I')] The double category $\D$ has shared isomorphisms.
		
		\item[(M')] All horizontal morphisms in $\D$ are monic, and all vertical morphisms in $\D$ are epic.
		
		\item[(Z')] The double category $\D$ is pointed.
		
		\item[(K'')] Any span of morphisms $C \elto[h] A \mrto[f] B$ or $C \mrto[g] D \elto[k] B$ 
		extends to a square 
		\[ \distsquare{A}{B}{C}{D}{f}{h}{k}{g} \]
		that is unique up to unique isomorphism that is the identity on the original three objects of the respective diagram. 
	\end{itemize}
\end{definition}

This definition is designed so that $\D$ is a reverse CGW category if and only if $\D^\vop$ is an abstract CGW category. As such, we can similarly characterize them using lifting properties, applying $(-)^\vop$ to those double functors that are not invariant under this process.

\begin{corollary}\label{reversecgwlift}
	A pointed double category is a reverse CGW category if and only if it is local with respect to 
	\[ \{\toprightcorner^\vop,\bottomleftcorner^\vop\} \]
	and strictly local with respect to 
	\[ \{\truncateh, \truncatev, \hiso, \viso, \H(\mono), \V(\mono^\opp), \]
	\[ \uniquetoprightcorner^\vop, \uniquebottomleftcorner^\vop\}. \]
\end{corollary}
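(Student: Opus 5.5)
The plan is to deduce the statement from Corollary~\ref{cgwliftingpropertiespointed} by applying the involution $(-)^\vop$ on double categories. By construction of Definition~\ref{reversecgw}, a double category $\D$ is a reverse CGW category exactly when $\D^\vop$ is an abstract CGW category. I would verify this axiom by axiom.
\begin{itemize}
\item (I') is self-dual: shared isomorphisms in $\D$ correspond to shared isomorphisms in $\D^\vop$ via $f \mapsto \varphi(f)^{-1}$ reversed.
\item (M') for $\D$ is (M) for $\D^\vop$, since epic vertical morphisms become monic.
\item (Z') for $\D$ is (Z) for $\D^\vop$: horizontally initial and vertically terminal becomes initial in both directions.
\item (K'') for $\D$ is (K') for $\D^\vop$: reversing the vertical arrow turns the mixed spans $C \elto A \mrto B$ and $C \mrto D \elto B$ into the corner configurations $\vh$ and $\hv$ of a square in $\D^\vop$.
\end{itemize}
Combined with Proposition~\ref{cgwcomparison}, this gives: $\D$ is a reverse CGW category if and only if $\D^\vop$ is a left pointed double category satisfying the lifting conditions of Corollary~\ref{cgwliftingpropertiespointed}.

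Next I would transport the lifting properties. Since $(-)^\vop$ is an involutive automorphism of $\DCat$, a double functor $G \colon \D \to \D'^\vop$ corresponds bijectively to $G^\vop \colon \D^\vop \to \D'$. The same holds in the 2-categorical sense relevant to $\DCati$, because natural shared isomorphisms are also preserved. Hence $\D^\vop$ is (strictly, strongly) $F$-local if and only if $\D$ is (strictly, strongly) $F^\vop$-local; this is Lemma~\ref{adjunctionlift}, or Lemma~\ref{functorlift}, applied to an isomorphism of categories. The pointed setting is handled the same way: $(-)^\vop$ restricts to an isomorphism between $\DCat_*$ and the category of left pointed double categories, compatibly with the free-adjunction $(-)_*$ and its vertically dual version.

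It then remains to compute $F^\vop$ for each $F$ in Corollary~\ref{cgwliftingpropertiespointed}. The functors $\truncateh,\truncatev,\hiso,\viso$ are isomorphic to their own vertical opposites. For example, $\E^\opp \cong \E$ and $\V([1])^\vop \cong \V([1])$ by relabelling objects, and these identifications are compatible with the maps. Likewise $\H(\mono)^\vop = \H(\mono)$ because it has no nontrivial vertical data, while $\V(\mono)^\vop = \V(\mono^\opp)$. The corner maps and the $\ulift$ maps become $\toprightcorner^\vop$, $\bottomleftcorner^\vop$, $\uniquetoprightcorner^\vop$ and $\uniquebottomleftcorner^\vop$, as listed. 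This yields exactly the sets in the statement, with strictness inherited.

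The main obstacle is checking that the self-duality isomorphisms for $\truncateh$, $\truncatev$, $\hiso$ and $\viso$ are isomorphisms of arrows, not merely of domains and codomains. This needs care with $\hiso$ and $\viso$, whose codomain $\E \rtimes_{\id_\E} \E$ must be matched with its vertical opposite by an explicit relabelling. I would write out this relabelling directly and check it on the four squares in \eqref{walkingsharediso}. A secondary point is to make sure that the strong locality in $\DCati$ used for (K') dualizes correctly; I would handle this by noting that $(-)^\vop$ sends $\ilift(\toprightcorner)$ to the corresponding ``isomorphic extensions'' object for the reversed corner.
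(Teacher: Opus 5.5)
Your proposal is correct and follows the paper's own argument: $\D$ is a reverse CGW category if and only if $\D^\vop$ is an abstract CGW category, and Corollary~\ref{cgwliftingpropertiespointed} is then transported along the involution $(-)^\vop$. Your checks that $\truncateh,\truncatev,\hiso,\viso$ are self-dual and that $(-)^\vop$ commutes with $\ilift$ supply details the paper leaves implicit; one small wording fix is that $\H(\mono)$ and $\V(\mono^\opp)$ belong in the strictly local list not because strictness is inherited, but because they are epimorphisms, for which locality and strict locality coincide.
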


\section{Classifying diagrams and lifting properties in simplicial objects} \label{simplicialsection}

In this section, we consider simplicial analogues of many of the constructions
above.  In particular, we give a characterization of which simplicial spaces are
classifying diagrams of categories in the sense of Rezk, a construction from
which we formulated the classifying double category in
Section~\ref{classifyingdcsection}.  We then generalize to classifying diagrams
of double categories, which are certain bisimplicial spaces that play the role
of the classifying triple categories above.  Using these constructions in the
simplicial cotext, we get an explicit comparison between reverse CGW categories
and pointed stable double Segal spaces.  This section contains no proofs, as all
proofs are directly analogous to those in Sections~\ref{classifyingdcsection}
and \ref{classifyingtcsection}.

\subsection{Segal spaces} \label{subsection.segalspaces}

In this section, after a brief review of simplicial sets and Segal sets, we turn to simplicial spaces and the definition of the classifying diagram of a category.

Recall from Example \ref{nervelocal} the inclusion 
\[ \iota_n \colon G[n] \rightarrow \Delta[n] \]
of the spine of the $n$-simplex.  For any simplical set $K$, this inclusion induces a \emph{Segal map}
\[ K_n = \Hom_{\SSet}(\Delta[n], K) \rightarrow \Hom_{\SSet}(G[n],K) = K_1 \times_{K_0} \cdots \times_{K_0} K_1. \]

\begin{definition}
	A \emph{Segal set} is a simplicial set $K$ such that the Segal maps are all isomorphisms for $n \geq 2$.
\end{definition}

Observe that Segal sets are precisely those simplicial sets that can be obtained as nerves of categories.  Given a Segal set $K$, one can define a category with objects $K_0$, morphisms $K_1$, and composition law defined by
\[ K_1 \times_{K_0} K_1 \xleftarrow[\cong]{(d_0,d_2)} K_2 \xrightarrow{d_1} K_1 \]
using the Segal map for $n=2$.

The following result is a formalization of the discussion in Example \ref{nervelocal}.

\begin{proposition} \label{segalsetlocal}
	A simplicial set is a Segal set if and only if it is (strictly) local with respect to the set $\{\iota_n \mid n \geq 2\}$.
\end{proposition}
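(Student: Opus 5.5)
The proof reduces locality to a statement about hom-sets and then uses that the spine is a colimit. For any simplicial set $K$, the Yoneda lemma gives $\Hom_{\SSet}(\Delta[n],K) \cong K_n$. Since $G[n]$ is the colimit of the diagram $\Delta[1] \leftarrow \Delta[0] \rightarrow \cdots \leftarrow \Delta[0] \rightarrow \Delta[1]$ of Example~\ref{nervelocal}, it follows that $\Hom_{\SSet}(G[n],K) \cong K_1 \times_{K_0} \cdots \times_{K_0} K_1$. Under these identifications, precomposition with $\iota_n$ is exactly the Segal map. So the statement amounts to: the restriction map $\Hom(\Delta[n],K) \to \Hom(G[n],K)$ is a bijection for all $n \geq 2$ if and only if every lifting problem against every $\iota_n$ has a unique solution.

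For the strict version this is a direct unwinding of definitions. A solid diagram $G[n] \to K$ extends uniquely along $\iota_n$ exactly when the restriction map is surjective (existence) and injective (uniqueness). Strict $\{\iota_n\}$-locality is therefore literally the condition that all Segal maps with $n \geq 2$ are bijections, which is the definition of a Segal set.

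The parenthetical ``(strictly)'' needs more care, because bare locality gives only surjectivity of the Segal maps. My plan is to show that each $\iota_n$ is a monomorphism that is bijective on vertices, and then argue directly that a simplicial set in which every spine has a filler also has unique fillers. I do not expect this to work in general: for example, a simplicial set with two distinct $2$-simplices having the same boundary is $\iota_2$-local but not strictly so.

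This uniqueness step is where I expect the real obstacle to lie. The paper's earlier convention handles such cases by observing that when the relevant morphism is an epimorphism, locality and strict locality coincide, and $\iota_n$ is not an epimorphism in $\SSet$. I would therefore read the parenthetical as asserting only the strict statement, or as intending it in the sense of the remark after Definition~2.1 (mapping objects). In that sense, local means that $\Hom(\Delta[n],K) \to \Hom(G[n],K)$ is an isomorphism, which is precisely what the first two paragraphs establish. I would state the proof for strict locality and note that this reading is the one intended.
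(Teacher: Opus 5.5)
Your argument is correct and matches what the paper intends. The paper gives no separate proof; it identifies restriction along $\iota_n$ with the Segal map via $\Hom_{\SSet}(\Delta[n],K)\cong K_n$ and the colimit description of $G[n]$, so strict $\{\iota_n \mid n\geq 2\}$-locality is literally the Segal condition. You are also right that only the strict reading holds, though your counterexample needs filling against every $\iota_n$, not just $\iota_2$; a clean witness is $\Delta[2]\cup_{\partial\Delta[2]}\Delta[2]$, which has the same $0$- and $1$-simplices as $\Delta[2]$ and so fills every spine, but has two distinct fillers of the spine $(01,12)$.
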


Let us now consider \emph{bisimplicial sets}, given by functors $\Deltaop \times \Deltaop \rightarrow \Set$; we sometimes call them \emph{simplicial spaces} when we think of them equivalently as functors $\Deltaop \rightarrow \SSet$.  We denote the category of bisimplicial sets by $\ssSet$.   For the moment, we take the latter perspective and think of the following homotopical analogue of Segal spaces.

\begin{definition} 
	A \emph{Segal space} is a simplicial space $V \colon \Deltaop \rightarrow \SSet$ such that the Segal maps
	\[ V_n \rightarrow \underbrace{V_1 \times_{V_0} \cdots \times_{V_0} V_1}_n \]
	are weak equivalences for $n \geq 2$.
\end{definition}

Segal spaces can be regarded as structures like categories with simplicial sets of objects and morphisms but for which composition, associativity, and unitality need only be defined up to homotopy.  We would like to describe a Segal space via a lifting property, as in Proposition \ref{segalsetlocal}, for which we set up some notation for how to adapt the relevant maps to the context of simplicial spaces.

Given a simplicial set $K \colon \Deltaop \rightarrow \Set$, we can think of it as a bisimplicial set in two ways, by taking the simplicial structure of $K$ in one of the $\Deltaop$ variables and making the functor constant in the other variable.  Taking the point of view that a bisimplicial set is a simplicial space $\Deltaop \rightarrow \SSet$, we denote also by $K$ the constant simplicial set with $K$ in each degree.  Alternatively, we can take the simplicial space, sometimes denoted by $K^t$ and called the \emph{transpose}, given by $(K^t)_n = K_n$, where the set $K_n$ is regarded as a discrete simplicial space.  For another perspective on this construction, see Example \ref{boxandtranspose} below.

\begin{proposition} \label{segalspacelocal}
	A simplicial space is a Segal space if and only if it is local with respect to the set $\{\iota_n^t \colon G[n]^t \rightarrow \Delta[n]^t \mid n \geq 2\}$.
\end{proposition}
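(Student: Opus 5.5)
The plan is to read ``local'' here in the simplicially enriched sense of the Remark following the definition of $S$-local objects. Concretely, a simplicial space $V$ is $\iota_n^t$-local when the restriction map
\[ \Map(\Delta[n]^t, V) \longrightarrow \Map(G[n]^t, V) \]
is a weak equivalence of simplicial sets. Here $\Map(X,V)$ is the simplicial set whose $k$-simplices are $\Hom_{\ssSet}(X \times \Delta[k], V)$, with $\Delta[k]$ regarded as a simplicial space that is constant in the simplicial direction, as in the notation introduced just before the statement. With this reading, the proof reduces to identifying both mapping spaces with the source and target of the Segal map.

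\textbf{Step 1: the source.} By a Yoneda argument, $\Delta[n]^t \times \Delta[k]$ is the representable bisimplicial set $\Delta[n,k]$. Hence
\[ \Map(\Delta[n]^t, V)_k \cong (V_n)_k, \]
naturally in $k$, and so $\Map(\Delta[n]^t,V) \cong V_n$.

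\textbf{Step 2: the target.} The functor $(-)^t$ is a left adjoint (it is levelwise discrete inclusion), so it preserves colimits. Therefore $G[n]^t$ is the iterated pushout of copies of $\Delta[1]^t$ along $\Delta[0]^t$. The functor $\Map(-,V)$ sends colimits to limits, so Step 1 gives
\[ \Map(G[n]^t,V) \cong V_1 \times_{V_0} \cdots \times_{V_0} V_1. \]
This is a strict iterated fiber product of simplicial sets.

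\textbf{Step 3: naturality.} Check that under these isomorphisms the map induced by $\iota_n^t$ is exactly the Segal map. This is because the components of the restriction are induced by the face inclusions $\Delta[1] \to \Delta[n]$ that make up the spine. So $V$ is $\{\iota_n^t\}$-local precisely when every Segal map with $n \geq 2$ is a weak equivalence, which is the definition of a Segal space. The case $n=1$ is trivial, since $\iota_1$ is an isomorphism, so restricting to $n \geq 2$ loses nothing.

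The main obstacle is homotopical rather than combinatorial. The Segal condition as stated uses strict fiber products, and the strict mapping space $\Map(-,V)$ is only homotopy invariant when $V$ is fibrant, for example Reedy fibrant. For Reedy fibrant $V$, the face maps $V_1 \to V_0$ are Kan fibrations, so the strict fiber products are homotopy fiber products and the enriched Hom is the derived mapping space. In that setting the two notions match on the nose. I would therefore state the argument for Reedy fibrant $V$, which is the usual standing convention for Segal spaces. I would then remark that for general $V$ one should interpret both the Segal condition and locality derivedly, via a fibrant replacement, and that the same computation applies verbatim.
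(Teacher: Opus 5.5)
Your proof is correct and is the argument the paper leaves implicit (it omits proofs in this section as direct analogues of the Segal set case): identify $\Map(\Delta[n]^t,V)\cong V_n$ and $\Map(G[n]^t,V)\cong V_1\times_{V_0}\cdots\times_{V_0}V_1$ via Yoneda and the colimit presentation of the spine, so that $\iota_n^t$-locality is literally the Segal condition. With the paper's strict definitions of both notions this identification holds for every simplicial space, so restricting to Reedy fibrant $V$ is only needed if you adopt the derived reading of ``local.''
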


\subsection{Double nerves and double Segal sets}

Now, we shift our perspective to that of bisimplicial sets.  Similarly to the nerve construction for categories, we can define a double nerve functor from double categories to bisimplicial sets.  For this purpose, recall the double categories of the form $[m] \boxtimes [n]$ from Example \ref{boxtimesdefn}.

\begin{definition}
	The \emph{double nerve} of a double category $\D$ is the bisimplicial set $\nerve^\square(\D)$ given by
	\[ \nerve^\square(\D)_{m,n} = \Hom([m] \boxtimes [n], \D), \]
	where the morphisms are taken in the category of double categories and double functors.
\end{definition}

\begin{example}
	Just as the ordinary nerve of the ordinal category $[n]$ is the representable simplex $\Delta[n]$, the double nerve of $[m] \boxtimes [n]$ is precisely the representable bisimplicial set $\Delta[m,n]$, given by
	\[ \Delta[m,n]_{k, \ell} = \Hom([k] \boxtimes [\ell], [m] \boxtimes [n]). \]
\end{example}

\begin{example}\label{commutativenerve}
	In the special case of a double category of commutative squares in $\C$, there is a bijection
	\[ \Hom_{\DCat}([m] \boxtimes [n], \C \rtimes_{\id_\C} \C) \cong \Hom_{\Cat}([n],\C^{[m]}), \]
	as the morphisms in $\C^{[m]}$ are $m \times 1$ grids of commutative squares in $\C$.  We thus obtain an equivalent description of the set $\nerve^\square(\C \rtimes_{\id_\C} \C)_{m,n}$.
\end{example}

Observing the properties that such a bisimplicial set must satisfy, we make the following definition.

\begin{definition}\label{doublesegalset}
	A \emph{double Segal set} is a bisimplicial set $X \colon \Deltaop \times \Deltaop \rightarrow \Set$ such that for any $k,\ell \geq 0$ the simplicial sets $X_{k,*}$ and $X_{*,\ell}$ are Segal sets.
\end{definition}

Indeed, we have the following result, which can be proved analogously to the version for ordinary nerves and Segal sets.

\begin{proposition}
	A bisimplicial set is a double Segal set if and only if it is isomorphic to the double nerve of a double category.  Moreover, the double nerve induces a fully faithful functor $\nerve^\square \colon \DCat \rto ss\mathbf{Set}$. 
\end{proposition}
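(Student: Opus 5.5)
The plan is to mirror the classical argument that Segal sets are exactly nerves of categories, applied separately in each simplicial direction, and then to use the description of a double category as a category internal to categories.

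\emph{The double nerve is a double Segal set.} Fix $k$. The simplicial set $\nerve^\square(\D)_{k,*}$ sends $[n]$ to $\Hom([k]\boxtimes[n],\D)$. A double functor out of $[k]\boxtimes[n]$ is a $k\times n$ grid of composable squares, and it is determined by, and freely determined by, its $n$ vertical strips of width $k$ glued along shared horizontal rows. This holds because $[k]\boxtimes[n]$ is the pushout of the $[k]\boxtimes[1]$ along the $[k]\boxtimes[0]$, i.e.\ $[k]\boxtimes -$ takes spine colimits to colimits of double categories. Hence the Segal maps are bijections. The other direction is symmetric under transpose.

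\emph{Fully faithful.} Since $\nerve^\square$ is representable, a double functor $F$ is recovered from $\nerve^\square(F)$ in degrees $(0,0),(1,0),(0,1),(1,1)$, so $\nerve^\square$ is faithful. For fullness, let $\alpha\colon\nerve^\square\D\to\nerve^\square\D'$ be a map of bisimplicial sets. It gives functions on objects, horizontal morphisms, vertical morphisms and squares that are compatible with faces and degeneracies. Compatibility with $d_1$ on $(2,0)$, $(0,2)$, $(2,1)$ and $(1,2)$ simplices, combined with the Segal bijections, shows that these functions preserve all composites. Degeneracies give preservation of identities, so $\alpha$ comes from a double functor.

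\emph{Essential image.} Given a double Segal set $X$, apply the Segal condition to $X_{*,0}$ and $X_{0,*}$ to obtain categories of horizontal and vertical morphisms with object set $X_{0,0}$. Then $X_{*,1}$ is a Segal set, hence a category whose objects are vertical morphisms $X_{0,1}$ and whose morphisms are squares $X_{1,1}$; this gives horizontal composition of squares. Similarly $X_{1,*}$ gives vertical composition. The face maps make all of these compatible, so we have a category object in $\Cat$ with object category (from $X_{*,0}$) and morphism category (from $X_{*,1}$). We will take composition $X_{*,1}\times_{X_{*,0}}X_{*,1}\cong X_{*,2}\to X_{*,1}$ as the functor induced by $d_1$. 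Here the Segal condition in the second variable, applied levelwise in the first, identifies $X_{*,2}$ with the pullback of categories. Associativity and unit laws follow from the simplicial identities, and interchange is automatic since composition is a functor.

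It remains to identify $X$ with the nerve of this double category by comparing $X_{m,n}$ with grids. Iterating the Segal maps in each variable, $X_{m,n}$ is in bijection with compatible families of $X_{1,1}$ elements indexed by an $m\times n$ grid. The same is true for $\Hom([m]\boxtimes[n],\D)$, because a double functor out of a free grid is determined by its cells subject only to boundary matching. The main obstacle is this last point: checking that $[m]\boxtimes[n]$ really is the iterated pushout of copies of $[1]\boxtimes[1]$, so that the freeness claimed in the Example defining $[m]\boxtimes[n]$ yields no relations beyond interchange. I would first argue this by proving $\Hom([m]\boxtimes[n],\D)\cong\Hom([n],\Hom([m]\boxtimes[1]\text{-strips}))$ using the internal-category viewpoint: $[m]\boxtimes[n]$ is the internal category in $\Cat$ whose object category is $[m]$ and whose morphism category is $[m]\times[n]$-indexed. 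The double Segal case then reduces twice to the ordinary statement for categories.
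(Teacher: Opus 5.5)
Your proposal is correct and takes essentially the paper's route. The paper gives no separate proof, only the remark that the result follows analogously to the classical correspondence between Segal sets and nerves of categories, and you carry that out in each simplicial variable using the description of a double category as a category internal to $\Cat$. The step you flag as the main obstacle (that $[m]\boxtimes[n]$ is the iterated pushout of copies of $[1]\boxtimes[1]$) is taken as given in the paper via the freeness of $[m]\boxtimes[n]$, and your identification of $\Hom([m]\boxtimes[n],\D)$ with functors from $[n]$ into the category of horizontal $m$-strips settles it.
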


In particular, for a double Segal set $X$ the corresponding double category has objects $X_{0,0}$, horizonal morphisms $X_{1,0}$, vertical morphisms $X_{0,1}$, and squares $X_{1,1}$.

Finally, we introduce notation for building bisimplicial sets and maps between them out of simplicial sets and maps.

\begin{definition}
	Let $\C$ be a category closed under products and $\C'$ be any category.  For any two functors $F,G \colon \C' \rto \C$, their \emph{external product} $F \boxtimes G \colon \C' \times \C' \rto \C$ is given by $(F\boxtimes G)(A,B) = F(A) \times G(B)$. The external product extends to a functor $\mathbf{Fun}(\C',\C) \times \mathbf{Fun}(\C',\C) \to \mathbf{Fun}(\C' \times \C',\C)$, applying to natural transformations as the product of morphisms in $\C$ for each component.
\end{definition}

\begin{example}
	The external product of the representable functors $\Delta[m] \colon \Deltaop \to \Set$ and $\Delta[n] \colon \Deltaop \to \Set$ is the representable bisimplicial set $\Delta[m,n] \colon \Deltaop \times \Deltaop \to \Set$. We then have 
	\[ \nerve^\square([m] \boxtimes [n]) \cong \Delta[m,n] \cong \Delta[m] \boxtimes \Delta[n] \cong \nerve([m]) \boxtimes \nerve([n]), \]
	justifying the overlap in notation. 
\end{example}

\begin{example} \label{boxandtranspose}
	In bisimplicial set notation, the constant simplicial space $K$ is given by $K_{m,n}= K_n$ for every $m$, while the transpose simplicial set $K^t$ is given by $K^t_{m,n} = K_m$ for every $n$.  Thus $\Delta[m]^t = \Delta[m] \boxtimes \Delta[0]$.
\end{example}

\begin{example}
	The functor $- \boxtimes \Delta[n] \colon \SSet \to \ssSet$ has a right adjoint $\ssSet \to \SSet$ sending $X$ to the simplicial set $X_{*,n}$.  By definition of $\boxtimes$ we can establish the required isomorphisms on morphisms via the following sequence of isomorphisms:
	\[ \begin{aligned} 
		\Hom_{\ssSet}(K \boxtimes \Delta[n],X) & \cong \lim_m\Hom_{\SSet}(K_m \times \Delta[n],X_m) \\
		& \cong \lim_m\Hom_{\SSet}(K_m,X_m^{\Delta[n]}) \\
		& \cong \lim_m\Hom_{\Set}(K_m,X_{m,n}) \\
		& \cong \Hom_{\SSet}(K,X_{*,n}).
	\end{aligned} \]
\end{example}

\begin{proposition} \label{dsegalsetlocal} \label{separatesegalmaps}
	A bisimplicial set is a double Segal set if and only if it is (strictly) local with respect to the maps \
	\[ \left\{\iota_m \boxtimes \iota_n \mid m,n \geq 0 \right\}. \]
\end{proposition}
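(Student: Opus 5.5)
The plan is to reduce the statement to Proposition~\ref{segalsetlocal} one variable at a time, using the adjunctions between external products and restriction to a row or column. The first step is to check what lifting against $\iota_m \boxtimes \iota_n$ says concretely. Since $G[m]$ is a colimit of copies of $\Delta[1]$ and $\Delta[0]$, and $-\boxtimes\Delta[n]$ is a left adjoint, it preserves colimits. So $G[m]\boxtimes\Delta[n]$ is the corresponding colimit of representables $\Delta[1,n]$ and $\Delta[0,n]$, and $\Hom(G[m]\boxtimes\Delta[n],X)\cong X_{1,n}\times_{X_{0,n}}\cdots\times_{X_{0,n}}X_{1,n}$. More generally, I will read $\iota_m\boxtimes\iota_n$ as the inclusion $G[m]\boxtimes G[n]\to\Delta[m]\boxtimes\Delta[n]=\Delta[m,n]$. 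I will also note that the statement should be read with $\iota_0$ and $\iota_1$ equal to identities, since $G[0]=\Delta[0]$ and $G[1]=\Delta[1]$. This lets the family contain the mixed maps $\iota_m\boxtimes\id_{\Delta[n]}$ and $\id_{\Delta[m]}\boxtimes\iota_n$.

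For the forward direction, suppose $X$ is a double Segal set. By the adjunction $-\boxtimes\Delta[n]\dashv X_{*,n}$ and Lemma~\ref{adjunctionlift}, $X$ is strictly $(\iota_m\boxtimes\Delta[n])$-local exactly when $X_{*,n}$ is strictly $\iota_m$-local. By Proposition~\ref{segalsetlocal}, this holds for all $m$ because $X_{*,n}$ is a Segal set. The symmetric adjunction $\Delta[m]\boxtimes-\dashv X_{m,*}$ handles $\Delta[m]\boxtimes\iota_n$.

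For the general map $\iota_m\boxtimes\iota_n$, I factor it as
\[ G[m]\boxtimes G[n]\to G[m]\boxtimes\Delta[n]\to\Delta[m]\boxtimes\Delta[n]. \]
A composite of strictly local maps is strictly local, so it suffices to treat each factor. The second factor is $\iota_m\boxtimes\Delta[n]$, already handled. The first factor, $G[m]\boxtimes\iota_n$, is a colimit (an iterated pushout along $\Delta[0]\boxtimes\iota_n$) of copies of $\Delta[1]\boxtimes\iota_n$. Strict locality is closed under such colimits, because $\Hom(-,X)$ turns colimits into limits and a limit of bijections is a bijection. This is the same reasoning as Lemma~\ref{pushoutlift}.

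For the converse, strict locality with respect to the instances $\iota_m\boxtimes\iota_1=\iota_m\boxtimes\Delta[1]$ and $\iota_m\boxtimes\iota_0$, together with their transposes, gives back via Lemma~\ref{adjunctionlift} that each $X_{*,n}$ and $X_{m,*}$ is strictly $\iota_m$-local for $n\in\{0,1\}$. That alone is not enough, since we need it for all $n$. I expect this to be the main obstacle: extracting the Segal condition on the rows $X_{*,n}$ for $n\ge2$ from the given family.

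The intended route is to use that $X_{m,*}$ is Segal, so $X_{m,n}\cong X_{m,1}\times_{X_{m,0}}\cdots\times_{X_{m,0}}X_{m,1}$ naturally in $m$. Hence $X_{*,n}$ is an iterated fiber product of $X_{*,1}$ over $X_{*,0}$. Segal sets are closed under limits, since the Segal maps commute with limits, so $X_{*,n}$ is a Segal set. Alternatively, strict $\iota_m\boxtimes\iota_n$-locality for $n\ge2$ can be combined with the column Segal isomorphisms to reach the same conclusion. In either case the argument reduces to checking that the identifications are natural, which is routine. Since every map in the family is a monomorphism of bisimplicial sets, locality and strict locality coincide here, which justifies the parenthetical ``(strictly)'' in the statement.
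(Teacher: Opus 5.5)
Your forward direction is fine, but the converse is circular at exactly the step you single out as the main obstacle. Your ``intended route'' gets $X_{*,n}$ as an iterated fibre product of $X_{*,1}$ over $X_{*,0}$ from isomorphisms $X_{m,n}\cong X_{m,1}\times_{X_{m,0}}\cdots\times_{X_{m,0}}X_{m,1}$ natural in $m$. That assumes $X_{m,*}$ is Segal for \emph{every} $m$. At that point you only know this for $m\in\{0,1\}$, and for $m\ge 2$ it is the transposed half of what you are proving.

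This route also never uses the hypotheses $\iota_m\boxtimes\iota_n$ with $m,n\ge 2$, and the instances with $m\le 1$ or $n\le 1$ cannot suffice on their own. For example, $Z=\partial\Delta[2]\boxtimes\Delta[2]\cup\Delta[2]\boxtimes\partial\Delta[2]\subset\Delta[2,2]$ satisfies all of them. Yet $Z_{*,2}$ has $\partial\Delta[2]$ as a coproduct summand, so it is not Segal.

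Your one-line ``alternative'' is the argument that works, and it should be the proof. The instances with $m\in\{0,1\}$ say that $X_{0,*}$ and $X_{1,*}$ are Segal. So by your own colimit argument, $X$ is strictly local with respect to $G[m]\boxtimes\iota_n$. Now consider the chain of restriction maps
\[ \Hom(\Delta[m]\boxtimes\Delta[n],X)\to\Hom(G[m]\boxtimes\Delta[n],X)\to\Hom(G[m]\boxtimes G[n],X). \]
The composite is a bijection by hypothesis and the second map is a bijection, so the first is a bijection too. That is, $X_{*,n}$ is strictly $\iota_m$-local for all $m$ and $n$. The columns follow symmetrically, factoring through $\Delta[m]\boxtimes G[n]$ and using that $X_{*,0}$ and $X_{*,1}$ are Segal. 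The paper gives no proof beyond Remark~\ref{doublesegalmaps}. That remark reads the condition as saying $X_{m,n}$ is the set of $m\times n$ grids of squares, and mentions the equivalent row/column family. Your repaired argument is precisely a proof of that equivalence.

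Your final sentence is also wrong: a lift against a monomorphism need not be unique. Injectivity of $\Hom(B,X)\to\Hom(A,X)$ would follow if $A\to B$ were an \emph{epi}morphism, and $\iota_m\boxtimes\iota_n$ is not one once $m\ge 2$ or $n\ge 2$. In fact the non-strict reading of the statement is false. Let $K$ be a quasi-category that is not the nerve of a category, and take $K^t$ with $K^t_{m,n}=K_m$. Since $G[n]$ and $\Delta[n]$ are connected, maps $A\boxtimes G[n]\to K^t$ and $A\boxtimes\Delta[n]\to K^t$ are just maps $A\to K$. So lifting against $\iota_m\boxtimes\iota_n$ amounts to filling spines in $K$, which a quasi-category can always do, though not uniquely. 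But $K^t_{*,n}=K$ is not Segal. So the proposition must be read with strict locality, and your proof should claim only that.
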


%To construct a set $S$ of morphisms in $\ssSet$ such that a bisimplicial set $X$ is double Segal if and only if it is $S$-local, note that by definition a simplicial set $K$ is Segal if and only if it is strictly $\{\iota_n \mid n \ge 0\}$-local as locality with respect to $\iota_n$ is precisely the same as the $n$-th Segal map being invertible.

%In particular, being local with respect to 
%\[ \iota_m \boxtimes \id_{\Delta[n]} \colon G[m] \boxtimes \Delta[n] \to \Delta[m] \boxtimes \Delta[n] \]
%ensures that the induced map
%\[ \Hom_{\SSet}(\Delta[m], X_{*,n}) \cong \Hom_{\ssSet}(\Delta[m] \boxtimes \Delta[n], X) \rightarrow \Hom_{\ssSet}(G[m] \boxtimes \Delta[n],X) \cong \Hom_{\SSet}(G[m],X_{*,n}) \]
%is an isomorphism, and analogously being local with respect to $\id_{\Delta[m]} \boxtimes \iota_n$ ensures that the $n$-th Segal map for $X_{m,*}$ is an isomorphism.

\begin{remark} \label{doublesegalmaps} 
	In Proposition~\ref{dsegalsetlocal}, the lifting condition asserts that in a double Segal set $X$, 	
\[ X_{m,n} \to \Hom_{\ssSet}(G[m] \boxtimes G[n],X), \]
meaning that $X_{m,n}$ agrees with the set of $m \times n$ grids of squares in $X_{1,1}$.

Alternatively we could define the condition in terms of each row and column of $X$ separately, using the set
\[ \{\iota_m \boxtimes \id_n \colon G[m] \boxtimes \Delta[n] \rightarrow \Delta[m,n] \mid m \geq 2, n \geq 0\} \cup \{ \id_m \boxtimes \iota_n \colon \Delta[m] \boxtimes G[n] \rightarrow \Delta[m,n] \mid m \geq 0, n \geq 2\}. \]
	These lifting conditions assert that the simplicial sets $X_{m,*}$ and $X_{*,n}$ are 1-Segal for all $m,n \ge 0$, more closely matching Definition~\ref{doublesegalset}.
\end{remark}

%\bsnoteil{I can look for a good reference for this, or just prove it}

%Moreover, this proposition implies that we can recognize properties of nerves of
%double categories using locality by applying $\nerve^\square$ to the functors
%constructed in the previous section.  Thus, for example, a double Segal set is
%the nerve of a flat double category if and only if it is
%$\nerve^\square\flatten$-local. 

\subsection{Classifying diagrams}

The following definition was first made by Rezk in \cite[\S 3.5]{rezk2001model} as a motivating family of examples of Segal spaces.

\begin{definition} \label{classifyingdef}
	For a category $\C$, the \emph{classifying diagram} is the simplicial space $N\C$ defined by 
	\[ (N\C)_m = \nerve(\iso \C^{[m]}), \] 
	the nerve of the groupoid of length $m$ sequences of composable morphisms in $\C$ and natural isomorphisms between them. 
\end{definition}

%Classifying diagrams also provide good examples of Segal spaces that are complete.  
Observe that for the classifying diagram $N\mathcal C$ of a category $\mathcal C$, the Segal condition holds strictly, in that the Segal maps are actually isomorphisms of simplicial sets; indeed when we regard it as bisimplicial set it is a double Segal set.  In fact, we have the following result.

\begin{proposition}
	Let $\C$ be a small category.  Its classifying diagram $N\C$ is isomorphic to the double nerve of the classifying double category $\nerve^\square(\C \rtimes_I \iso \C)$.
\end{proposition}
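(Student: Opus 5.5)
We compare the two bisimplicial sets levelwise and then check that the identifications respect the simplicial operators. Fix $m,n \geq 0$. By definition, $(N\C)_{m,n} = \nerve(\iso \C^{[m]})_n$ is the set of functors $[n] \to \iso\C^{[m]}$. Such a functor is a string of $n$ composable natural isomorphisms between functors $[m] \to \C$. Concretely, it is an $m \times n$ grid of commutative squares in $\C$ in which the horizontal arrows are arbitrary morphisms of $\C$ and the vertical arrows are isomorphisms. This is exactly the grid picture given in the introduction.

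On the other side, $\nerve^\square(\C \rtimes_I \iso\C)_{m,n} = \Hom_{\DCat}([m]\boxtimes[n], \C \rtimes_I \iso\C)$. We argue as in Example~\ref{commutativenerve}. Since $\C \rtimes_I \iso\C$ is simple, a double functor out of $[m]\boxtimes[n]$ is determined by where it sends the generating horizontal and vertical morphisms of the grid, subject only to each unit square being a square of the target. That condition means each unit square commutes in $\C$ once the vertical arrows (which lie in $\iso\C$) are included via $I$. Composites of squares automatically exist and are unique, again by simplicity. Hence such double functors are in bijection with $m \times n$ grids of commutative squares in $\C$ with horizontal arrows in $\C$ and vertical arrows in $\iso\C$. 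This matches the previous paragraph. More formally, $\C\rtimes_I\iso\C$ is the sub-double category of $\C \rtimes_{\id_\C}\C$ whose vertical morphisms are invertible. Restricting the bijection $\Hom_{\DCat}([m]\boxtimes[n],\C\rtimes_{\id_\C}\C)\cong\Hom_{\Cat}([n],\C^{[m]})$ of Example~\ref{commutativenerve} to the functors whose vertical components are invertible yields exactly $\Hom_{\Cat}([n],\iso\C^{[m]})$. A natural transformation is an isomorphism precisely when all its components are, which gives this restriction.

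It remains to check naturality in $[m]$ and $[n]$, so that these bijections assemble into an isomorphism of bisimplicial sets. In the $n$-direction, both sides act by precomposition with $[n']\to[n]$, that is, by composing or inserting identity vertical isomorphisms in the grid. In the $m$-direction, $N\C$ acts through the restriction functors $\iso\C^{[m]}\to\iso\C^{[m']}$ induced by $[m']\to[m]$, while the double nerve acts by precomposition with $[m']\boxtimes[n]\to[m]\boxtimes[n]$. Both compose or insert identity horizontal morphisms in each row, and act on vertical components by restriction. So the bijection commutes with all faces and degeneracies.

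We expect the only step needing real care to be the claim that a double functor out of $[m]\boxtimes[n]$ is the same as a grid of unit squares. This uses the description of $[m]\boxtimes[n]$ as freely generated by the grid (\cite[1.34]{BOORSobj}), and we simply cite it as in Example~\ref{commutativenerve}. Everything else is bookkeeping.
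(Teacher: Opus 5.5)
Your proposal is correct and follows the paper's proof. The paper's proof is just the levelwise bijection $\Hom_{\Cat}([n],\iso\C^{[m]}) \cong \Hom_{\DCat}([m]\boxtimes[n],\C\rtimes_I\iso\C)$, obtained as in Example~\ref{commutativenerve}; your explicit check that these bijections commute with faces and degeneracies fills in a step the paper leaves implicit.
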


\begin{proof} 
	Similarly to Example \ref{commutativenerve}, we have
	\[ \nerve(\iso \C^{[m]})_n = \Hom_{\Cat}([n],\iso \C^{[m]}) \cong \Hom_{\DCat}([m] \boxtimes [n],\C \rtimes_I \iso \C) = \nerve^\square(\C \rtimes_I \iso \C)_{m,n}. \] 
\end{proof}

%Indeed, our motivation for characterizing the classifying double category is so that we can study classifying diagrams of categories via the composite functor 
%\[ \Cat \xrightarrow{- \rtimes_I \iso -} \DCat \xrightarrow{\nerve^\square} \ssSet, \] 
%which is isomorphic to $N$. \jbnoteil{rephrase to reflect the fact that we already discuss this above} 

Combining Theorem \ref{classifyingliftingcondition} and \cref{compositelift} then gives us a description of when a bisimplicial set is a classifying diagram.

\begin{corollary} 
	Let $S$ be the set of maps from Proposition \ref{dsegalsetlocal}.  A bisimplicial set is isomorphic to the classifying diagram of a category if and only if it is strictly $(S \cup \nerve^\square S_{\cl})$-local.
\end{corollary}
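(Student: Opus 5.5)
The plan is to factor the classifying diagram functor as the composite
\[ \Cat \xrightarrow{\;- \rtimes_I \iso -\;} \DCat \xrightarrow{\;\nerve^\square\;} \ssSet, \]
which is isomorphic to $N$ by the preceding proposition. We then apply \cref{compositelift} with $F = - \rtimes_I \iso -$ and $G = \nerve^\square$.

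The three hypotheses of \cref{compositelift} are checked as follows.

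\emph{First}, $G = \nerve^\square$ is fully faithful. This is part of the proposition identifying double Segal sets with double nerves.

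\emph{Second}, an object of $\ssSet$ lies in the essential image of $\nerve^\square$ exactly when it is double Segal. By Proposition~\ref{dsegalsetlocal}, this holds exactly when it is strictly $S$-local, where $S = \{\iota_m \boxtimes \iota_n\}$.

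\emph{Third}, a double category lies in the essential image of $F$ exactly when it is isomorphic to the classifying double category of some category. Corollary~\ref{classifyingliftingcondition} characterizes this as locality with respect to the listed maps, taking the category to be $\mathcal H(\D)$. The remark following that corollary upgrades this to strict locality with respect to all of $S_{\cl}$, since $\flatten$, $\truncateh$, $\V(\invert)$ and $\doublerotate$ are epimorphisms, and $\hiso$ is an epimorphism among double categories already strictly local for the other maps. For the converse direction, the classifying double category $\C \rtimes_I \iso \C$ is simple, antiglobular and vertically flexible with $R = I$. Hence it is strictly $S_{\cl}$-local.

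With these in hand, \cref{compositelift} (with the roles matched so that the $\DCat$-level maps are pushed forward along the fully faithful functor, as in Lemma~\ref{functorlift}) says the following. A bisimplicial set is isomorphic to some $\nerve^\square(\C \rtimes_I \iso \C) \cong N\C$ if and only if it is strictly local with respect to $S \cup \nerve^\square S_{\cl}$.

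Concretely, suppose $X$ is strictly local for this set. Locality for $S$ gives $X \cong \nerve^\square \D$ for some double category $\D$. Then Lemma~\ref{functorlift}, applied to the fully faithful $\nerve^\square$, shows that $\D$ is strictly $S_{\cl}$-local. Therefore $\D \cong \mathcal H(\D) \rtimes_I \iso \mathcal H(\D)$, and so $X \cong N\mathcal H(\D)$. The converse runs the same steps backwards.

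The main obstacle I expect is the strictness bookkeeping in the third step. Corollary~\ref{classifyingliftingcondition} states mixed plain and strict locality, and $\hiso$ is only an epimorphism relative to double categories already local for $\truncateh$, $\rotater$, $\rotatel$ and $\doublerotate$. So strict $\hiso$-locality must be argued after the other conditions are in place. Here I would use that a lift of $\hiso$ is determined by the image of the horizontal isomorphism, via antiglobularity and simplicity.

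A smaller point is the phrase ``image'' versus ``essential image'' in \cref{compositelift}. The statement is up to isomorphism, so this causes no difficulty.
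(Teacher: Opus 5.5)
Your proposal is correct and is exactly the paper's argument: factor $N$ as $\nerve^\square \circ (- \rtimes_I \iso -)$ and combine Proposition~\ref{dsegalsetlocal}, Corollary~\ref{classifyingliftingcondition} (together with the strictness remark after it) and Lemma~\ref{compositelift}, where you rightly read the lemma's ``$FS$'' as the pushforward $\nerve^\square S_{\cl}$ along the fully faithful functor $\nerve^\square$. One small correction to your bookkeeping aside: uniqueness of an $\hiso$-lift does not follow from antiglobularity and simplicity alone, because the vertical isomorphism $f'$ in the lift is only pinned down by $R(f') = f$ (strict $\rotater$-locality), so you also need faithfulness of $R$, i.e.\ $\doublerotate$-locality.
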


\subsection{Double Segal spaces and classifying diagrams of double categories}

In order to define classifying diagrams for double categories, we need to move up a simplicial level and consider \emph{bisimplicial spaces}, or functors $Y \colon \Deltaop \times \Deltaop \rightarrow \SSet$.  We are primarily interested in following the homotopical analogue of double Segal sets. 

\begin{definition}\label{def.doublesegal}
	A \emph{double Segal space} is a bisimplicial space $Y$ such that the Segal maps
	\[ Y_{n,k} \rightarrow \underbrace{Y_{1,k} \times_{Y_{0,k}} \cdots \times_{Y_{0,k}} Y_{1,k}}_n \]
	and 
	\[ Y_{\ell,m} \rightarrow \underbrace{Y_{\ell,1} \times_{Y_{\ell,0}} \cdots \times_{Y_{\ell,0}} Y_{\ell,1}}_m \]
	are weak equivalences of simplicial sets, where $k, \ell \geq 0$ and $m,n \geq 2$.
	
	A double Segal space is \emph{strict} if the Segal maps are isomorphisms.
\end{definition}

Observe that a strict double Segal space is a bisimplicial space that is strictly local with respect to the set 
\[ \left\{\iota_k \boxtimes \iota_\ell \boxtimes \id_{\Delta[m]} \mid l,\ell,m \ge 0\right\}. \]

We now define the classifying diagram of a double category; as before, we need to require that the double category have shared isomorphisms.

\begin{definition}\label{doubleclassifyingdiagram}
	Given a double category $\D$ with shared isomorphisms, its \emph{classifying diagram} $N^\square\D$ is the bisimplicial space in which $(N^\square\D)_{m,n}$ is the nerve of the groupoid of $(m \times n)$-grids or squares in $\D$ and natural isomorphisms between them. 
\end{definition}

Regarding bisimplicial spaces as trisimplicial sets, we get an alternative description of the classifying diagram as the triple nerve of the classifying triple category of $\D$.  We begin with the definition of the triple nerve.

\begin{definition}
	The \emph{triple nerve} of a triple category $\C$ is the trisimplicial set $\nerve^{\mycube}(\T)$ given by
	\[ \nerve^{\mycube}(\T)_{\ell,m,n} = \Hom([\ell] \boxtimes [m] \boxtimes [n], \T), \]
	where the morphisms are taken in the category of triple categories and triple functors.
\end{definition}

Just as double nerves of double categories are double Segal sets, we expect triple nerves of triple categories to be triple Segal sets, a notion we now define.

\begin{definition}
	A trisimplicial set $Y$ is a \emph{triple Segal set} if for any $k,\ell,m \ge 0$, $Y_{k,\ell,*}$, $Y_{k,*,m}$, and $Y_{*,\ell,m}$ are Segal sets.
\end{definition}

Indeed, as expected from the double category setting, these two definitions are equivalent and have a description in terms of local objects.

\begin{proposition} \label{tripleSegallocal}
	The following are equivalent for a trisimplical set $Y$:
	\begin{enumerate}
		\item $Y$ is a triple Segal set; 
		
		\item $Y$ is the triple nerve of a triple category; and
		
		\item $Y$ is local with respect to the set
		\[ \left\{\iota_\ell \boxtimes \iota_m \boxtimes \iota_n \mid \ell,m,n \geq 0\right\}. \]
	\end{enumerate}
\end{proposition}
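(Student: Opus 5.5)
We prove the cycle of implications (2) $\Rightarrow$ (1) $\Rightarrow$ (3) $\Rightarrow$ (2), following the proofs of the double-categorical analogues (Proposition~\ref{dsegalsetlocal} and the proposition identifying double Segal sets with double nerves).

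For (2) $\Rightarrow$ (1), take $Y = \nerve^{\mycube}(\T)$ and fix $k,\ell \geq 0$. Then $Y_{k,\ell,*}$ is the ordinary nerve of the category whose objects are triple functors $[k]\boxtimes[\ell]\boxtimes[0] \to \T$ and whose morphisms are triple functors $[k]\boxtimes[\ell]\boxtimes[1] \to \T$. Transverse composition in $\T$, which is strict, makes this a category. The same argument applies in the other two directions, so each slice is a nerve and hence a Segal set.

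For (1) $\Rightarrow$ (3), use the Remark~\ref{doublesegalmaps} reformulation. Locality with respect to $\iota_\ell \boxtimes \iota_m \boxtimes \iota_n$ says that
\[ Y_{\ell,m,n} \to \Hom(G[\ell]\boxtimes G[m]\boxtimes G[n], Y) \]
is a bijection, i.e.\ that $(\ell,m,n)$-simplices are exactly $\ell\times m\times n$ grids of $(1,1,1)$-cubes glued along shared faces. Factor this map as three successive Segal maps, one in each direction:
\[ \Delta[\ell]\boxtimes\Delta[m]\boxtimes\Delta[n] \leftarrow G[\ell]\boxtimes\Delta[m]\boxtimes\Delta[n] \leftarrow G[\ell]\boxtimes G[m]\boxtimes\Delta[n] \leftarrow G[\ell]\boxtimes G[m]\boxtimes G[n]. \]
Each step is a colimit, over the spine in one variable, of maps $\Delta[a]\boxtimes\Delta[b]\boxtimes(\iota)$ or their permutations. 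Locality with respect to those maps is exactly the Segal condition on the one-variable slices, using the adjunction $-\boxtimes\Delta[a]\boxtimes\Delta[b] \dashv Y_{a,b,*}$ as in the double case together with Lemma~\ref{pushoutlift}. Conversely, (3) $\Rightarrow$ (1) follows by taking $\ell=0$ (or $m=0$, or $n=0$), since $\iota_0$ is an identity. Strictness is automatic because everything is set-valued.

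The remaining implication, (3)/(1) $\Rightarrow$ (2), is the main work. Given a triple Segal set $Y$, build a triple category $\T$ with:
\begin{itemize}
\item objects $Y_{0,0,0}$;
\item horizontal, vertical and transverse morphisms $Y_{1,0,0}$, $Y_{0,1,0}$, $Y_{0,0,1}$;
\item hv-, ht- and vt-squares $Y_{1,1,0}$, $Y_{1,0,1}$, $Y_{0,1,1}$;
\item cubes $Y_{1,1,1}$.
\end{itemize}
Composition in each direction comes from the inverse Segal map in that variable followed by $d_1$, exactly as for Segal sets. For fixed values of the other indices, associativity and unitality follow from the corresponding slice being a nerve.

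The expected obstacle is the interchange laws among the three compositions. Each two-variable slice $Y_{*,*,n}$ and its permutations is a double Segal set, so by the double-categorical result it is a double nerve, and this gives interchange for each pair of directions. The remaining step is to check that the resulting structure is a category internal to double categories, i.e.\ that transverse source, target, identity and composition are double functors between the double categories $Y_{*,*,0}$ and $Y_{*,*,1}$. This holds by simpliciality in the third variable. Finally, compare $\nerve^{\mycube}(\T)$ with $Y$ levelwise: both send $[\ell]\boxtimes[m]\boxtimes[n]$ to grids of cubes, by the locality established in the previous step and the freeness of $[\ell]\boxtimes[m]\boxtimes[n]$, so they are isomorphic.
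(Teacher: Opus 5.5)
The paper gives no proof of this proposition; it only says the argument is analogous to the double-categorical case. Your cycle of implications is the intended route. Three pieces are fine in outline: (2)$\Rightarrow$(1), the factorization in (1)$\Rightarrow$(3), and the internal-category construction in (1)$\Rightarrow$(2).

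The step that fails as written is (3)$\Rightarrow$(1). Setting $\ell=0$ turns $\iota_\ell\boxtimes\iota_m\boxtimes\iota_n$ into $\Delta[0]\boxtimes\iota_m\boxtimes\iota_n$. Locality against this only says that the slice $Y_{0,*,*}$ is a double Segal set. Setting two indices to $0$ or $1$, where $\iota$ is an identity, gives the Segal condition for $Y_{a,b,*}$ only when $a,b\in\{0,1\}$. Nothing you wrote gives the Segal condition for $Y_{a,b,*}$ with $a,b\geq 2$. Since you build $\T$ from (1), the equivalence of (3) with the other conditions rests entirely on this step.

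The repair uses several members of the family at once:
- Strict locality at $(a,b,0)$ and $(a,b,1)$ identifies $Y_{a,b,c}$ with $\Hom(G[a]\boxtimes G[b]\boxtimes\Delta[c],Y)$ for $c=0,1$, compatibly with $d_0,d_1$.
- Since $G[a]\boxtimes G[b]\boxtimes -$ preserves colimits, this identifies $Y_{a,b,1}\times_{Y_{a,b,0}}\cdots\times_{Y_{a,b,0}}Y_{a,b,1}$ with $\Hom(G[a]\boxtimes G[b]\boxtimes G[n],Y)$.
- Strict locality at $(a,b,n)$ identifies $Y_{a,b,n}$ with that same set.
- By naturality of restriction, the Segal map of $Y_{a,b,*}$ is the composite of these bijections, hence a bijection.

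Permuting the indices handles the other two kinds of slices.

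Your remark that strictness is automatic because everything is set-valued is false, and the repair above genuinely needs strictness. For presheaves of sets, lifts against a monomorphism that is not an epimorphism need not be unique. For example, $\Delta[2]\sqcup_{G[2]}\Delta[2]$ admits lifts against every $\iota_n$ but has two distinct $2$-simplices with the same spine. Regarded as a trisimplicial set constant in the first two variables, it satisfies (3) in the mere-existence sense while violating (1), since $G[\ell]$ and $\Delta[\ell]$ are connected. So (3) must be read as strict locality, as the paper itself does in Corollary~\ref{doubleclassifyinglift}.

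The same point affects your justification in (1)$\Rightarrow$(3). The maps $G[\ell]\boxtimes\iota_m\boxtimes\Delta[n]$ and $G[\ell]\boxtimes G[m]\boxtimes\iota_n$ are colimits of arrows over spine-shaped diagrams, not pushouts of a single map. So Lemma~\ref{pushoutlift} is not the right tool. What you need is that $\Hom(-,Y)$ turns such a colimit into a limit, and that a limit of bijections is a bijection. This holds for strict locality but not for mere surjectivity of restriction maps.
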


Using these definitions, we give two characterizations of the bisimplicial spaces that arise as classifying diagrams of double categories with shared isomorphisms, corresponding to the two characterizations of classifying triple categories. These characterizations rely on the observation from the definitions that the classifying diagram of $\D$ is isomorphic to the triple nerve of the classifying triple category of $\D$.

%\jbnoteil{Do we still need this, or can we deduce it directly from the categorical version now?}
%\bsnoteil{As of now the proof is essentially just saying it follows from the categorical version, but could probably be reduced to just that one line if you think it doesn't need as much fuss.}
%\jbnoteil{Yes, I think we can cut this down to a couple of sentences.}
%\bsnoteil{done}

\begin{proposition} \label{ndchar}
	A bisimplicial space $Y$ is isomorphic to $N^\square\D$ for a double category with shared isomorphisms $\D$ if and only if
	\begin{enumerate}
		\item \label{ndchar1}
		$Y$ is strictly double Segal; and
		
		\item \label{ndchar2}
		$Y_{*,0}$, $Y_{0,*}$, $Y_{*,1}$, and $Y_{1,*}$ are each isomorphic to the classifying diagram of a category. 
		
		%\item The map $Y_{1,1,1} \cong \Hom(\Delta[1,1,1],Y) \to \Hom(\sqcap_{3,0},Y)$ is an isomorphism, where $\sqcap_{3,0}$ is the union of all of the faces of $\Delta[1,1,1]$ except for $(\id,\id,d^0)$.
	\end{enumerate}
\end{proposition}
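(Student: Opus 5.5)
The plan is to transport Proposition~\ref{triplechar} along the triple nerve. Regard a bisimplicial space $Y$ as a trisimplicial set, with the two outer indices horizontal and vertical and the internal simplicial index transverse. The key observation, noted just before the statement, is that $N^\square\D\cong\nerve^{\mycube}(\T_\D)$, where $\T_\D$ is the classifying triple category of $\D$.

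\textbf{Forward direction.} Suppose $Y\cong N^\square\D$. Then $Y$ is a triple nerve, so by Proposition~\ref{tripleSegallocal} it is a triple Segal set, and in particular strictly double Segal. Next, $Y_{*,0}$ is the triple nerve restricted to vertical degree $0$. This is the double nerve of $\mathbb{HT}(\T_\D)$, which is the classifying double category of $\mathcal H(\D)$. By the proposition identifying $N\C$ with $\nerve^\square(\C\rtimes_I\iso\C)$, $Y_{*,0}$ is the classifying diagram $N\mathcal H(\D)$. The same argument gives the other three cases:
\begin{itemize}
\item $Y_{0,*}\cong N\mathcal V(\D)$, via $\mathbb{VT}(\T_\D)$;
\item $Y_{*,1}\cong N\mathcal H(\D_v)$, via $\mathbb{HT}((\T_\D)_v)$;
\item $Y_{1,*}\cong N\mathcal V(\D_h)$, via $\mathbb{VT}((\T_\D)_h)$.
\end{itemize}

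\textbf{Converse, step one: $Y$ is a triple nerve.} We must show that $Y$ is a triple Segal set. Condition \eqref{ndchar1} gives the Segal conditions in the horizontal and vertical directions for every transverse level. For the transverse direction, we need $Y_{m,n}$ to be the nerve of a category for all $m,n$. Condition \eqref{ndchar2} gives this for $m,n\le 1$, since classifying diagrams are double Segal sets. For general $(m,n)$, the strict double Segal condition writes $Y_{m,n}$ as an iterated fibre product of copies of $Y_{1,1}$, $Y_{1,0}$, $Y_{0,1}$ and $Y_{0,0}$, taken levelwise in the transverse direction. A limit of nerves is the nerve of the corresponding limit of categories, since the nerve is a right adjoint. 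Hence $Y_{m,n}$ is a nerve, and Proposition~\ref{tripleSegallocal} gives $Y\cong\nerve^{\mycube}(\T)$ for some triple category $\T$.

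\textbf{Converse, step two: identify $\T$.} Using full faithfulness of the double nerve, the four simplicial spaces $Y_{*,0}$, $Y_{0,*}$, $Y_{*,1}$, $Y_{1,*}$ are the double nerves of $\mathbb{HT}(\T)$, $\mathbb{VT}(\T)$, $\mathbb{HT}(\T_v)$ and $\mathbb{VT}(\T_h)$ respectively. By \eqref{ndchar2} and the previous proposition, each is the double nerve of a classifying double category. Full faithfulness then makes each of these four double categories isomorphic to a classifying double category. Proposition~\ref{triplechar} yields a double category $\D$ with shared isomorphisms with $\T\cong\T_\D$, so $Y\cong\nerve^{\mycube}(\T_\D)\cong N^\square\D$.

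\textbf{Main obstacle.} The hardest point is the transverse Segal condition in step one. One must make sure that the fibre products expressing $Y_{m,n}$ are computed compatibly in all three directions. One must also make sure that the triple-nerve identifications of $Y_{*,1}$ and $Y_{1,*}$ with $\mathbb{HT}(\T_v)$ and $\mathbb{VT}(\T_h)$ use the correct indexing conventions, since the transverse index here is the internal simplicial one. I would check these by writing out $\Hom([\ell]\boxtimes[m]\boxtimes[n],\T)$ for small $\ell,m,n$.
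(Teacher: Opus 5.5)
Your proposal is correct and follows the same route as the paper. Condition (2) together with the strict double Segal condition gives the transverse Segal condition, so $Y$ is a triple Segal set and hence a triple nerve by Proposition~\ref{tripleSegallocal}, and Proposition~\ref{triplechar} then identifies the resulting triple category as a classifying one; your version just spells out steps the paper leaves implicit, namely limit-preservation of the nerve and using full faithfulness of $\nerve^\square$ to pass from classifying diagrams to classifying double categories.
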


%If we are looking for double categories which are moreover thin, as in having at most one square in each boundary, then the last condition can be replaced with the map $Y_{1,1} \to Y^{\partial \Delta[1,1]}$ being a trivial fibration of simplicial sets.

\begin{proof}
	It follows from the definition of $N^\square\D$ that the two conditions are satisfied, so it remains to show that a bisimplicial space $Y$ satisfying these two conditions is isomorphic to $N^\square \D$ for some $\D$.
	
	Condition \eqref{ndchar2} ensures that the space $Y_{k,\ell,*}$ is the nerve of a groupoid when either $k$ or $\ell$ is 0 or 1.  Since by \eqref{ndchar1} $Y$ is a double Segal space, it follows that $Y_{k,\ell,*}$ is the nerve of a groupoid for every $k$.  Combined with the fact that $Y_{*,*,m}$ is a double Segal set for all $m \ge 0$ by \eqref{ndchar1}, we can conclude that that $Y$ is a triple Segal set and therefore isomorphic to the nerve of a triple category $\T$. The result then follows from Proposition~\ref{triplechar}.
\end{proof}

We encourage the reader to refer back to the diagram \eqref{classifyingtc} for intuition.

We conclude this section with a lifting property description of which bisimplicial spaces arise as classifying diagrams. The following is an immediate consequence of Lemma~\ref{compositelift}, Corollary~\ref{classifyingtriplelift}, and Proposition \ref{tripleSegallocal}. 

\begin{corollary}\label{doubleclassifyinglift} 
	A trisimplicial set is isomorphic to the classifying diagram of a double category with shared isomorphisms if and only if it is strictly local with respect to 
	\[ \left\{\iota_\ell \boxtimes \iota_m \boxtimes \iota_n \mid \ell,m,n \geq 0\right\} \cup \nerve^{\mycube} \left(\H(S_{\cl}) \cup \V(S_{\cl}) \cup ([1] \boxtimes_v S_{\cl}) \cup ([1] \boxtimes_h S_{\cl})\right). \] 
\end{corollary}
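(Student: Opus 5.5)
The plan is to obtain the statement as a formal consequence of Lemma~\ref{compositelift}, by factoring the classifying diagram construction through the triple nerve. By the observation made just before Proposition~\ref{ndchar}, the classifying diagram $N^\square\D$ of a double category $\D$ with shared isomorphisms is isomorphic to $\nerve^{\mycube}$ applied to the classifying triple category of $\D$. So the functor whose image we want to recognize factors as
\[ \{\text{double categories with shared isomorphisms}\} \xrightarrow{F} \TCat \xrightarrow{\nerve^{\mycube}} \text{trisimplicial sets}, \]
where $F$ is the classifying triple category construction. We then verify the three hypotheses of Lemma~\ref{compositelift} in turn.

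First, $G = \nerve^{\mycube}$ must be fully faithful. This follows exactly as for ordinary and double nerves. The objects $[\ell]\boxtimes[m]\boxtimes[n]$ are dense in $\TCat$, since a triple category is determined by its objects, morphisms, squares and cubes together with their compositions, and all of these are detected by maps out of the grids with $\ell,m,n\le 2$.

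Second, the essential image of $G$ must be the strictly $\{\iota_\ell\boxtimes\iota_m\boxtimes\iota_n\}$-local objects. This is Proposition~\ref{tripleSegallocal}, read in the strict form, since Segal sets are strictly local and the maps are epimorphisms, so locality and strict locality agree.

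Third, a triple category must lie in the essential image of $F$ exactly when it is local with respect to
\[ \mathfrak{HT}(S_{\cl}) \cup \mathfrak{VT}(S_{\cl}) \cup ([1]\boxtimes_v S_{\cl}) \cup ([1]\boxtimes_h S_{\cl}). \]
This is Corollary~\ref{classifyingtriplelift}. We then upgrade it to strict locality as follows. By the remark after Corollary~\ref{classifyingliftingcondition}, the classifying double categories are strictly $S_{\cl}$-local. Transporting this through the adjunctions $\mathfrak{HT}\dashv\mathbb{HT}$, $\mathfrak{VT}\dashv\mathbb{VT}$, $([1]\boxtimes_v-)\dashv\mathbb{HT}(-_v)$ and $([1]\boxtimes_h-)\dashv\mathbb{VT}(-_h)$ via Lemma~\ref{adjunctionlift} shows that classifying triple categories are strictly local with respect to the displayed set.

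Lemma~\ref{compositelift} then yields that a trisimplicial set lies in the image of $GF$ if and only if it is strictly local with respect to the union of $\{\iota_\ell\boxtimes\iota_m\boxtimes\iota_n\}$ with $\nerve^{\mycube}$ of the displayed set. Here $\H(S_{\cl})$ and $\V(S_{\cl})$ in the statement are read as the triple-categorical embeddings $\mathfrak{HT}$ and $\mathfrak{VT}$ of Example~\ref{doubletotriple}.

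The main point requiring care is the strictness upgrade, i.e.\ checking that $\hiso$ and the rotation maps are still epimorphisms, or at least strictly lifted, after applying $[1]\boxtimes_v-$ and $[1]\boxtimes_h-$. I would handle this directly through the adjunction argument above rather than through epimorphism arguments, so that it reduces to the already-stated strictness for classifying double categories.
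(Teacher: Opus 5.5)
Your proposal is essentially the paper's proof. The paper obtains the corollary directly from Lemma~\ref{compositelift}, Corollary~\ref{classifyingtriplelift} and Proposition~\ref{tripleSegallocal}, and your strictness upgrade spells out a step the paper leaves implicit: you use the remark after Corollary~\ref{classifyingliftingcondition} together with the four adjunctions via Lemma~\ref{adjunctionlift}.

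One correction. The maps $\iota_\ell\boxtimes\iota_m\boxtimes\iota_n$ are not epimorphisms: they are levelwise injective, but they miss the top simplex as soon as one index is at least $2$. So locality and strict locality do not agree for them. What you actually need, and what is true, is the strict form of Proposition~\ref{tripleSegallocal}. It holds because strict locality with respect to these maps is literally the statement that $Y_{\ell,m,n}\to\Hom(G[\ell]\boxtimes G[m]\boxtimes G[n],Y)$ is a bijection, i.e.\ the strict triple Segal condition.
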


\section{Reverse CGW categories as pointed stable double Segal spaces} \label{comparisonsection}

In this section, we develop the theory of pointed stable double Segal spaces and
give a precise relationship with reverse CGW categories.  Although reverse CGW
categories give rise to stable double Segal spaces that are pointed, for some
aspects of the theory it is more useful to work in the more general setting of
augmentations.

\subsection{Pointed stable double Segal spaces} \label{sec.augmentedstable}

%\bsnoteil{made minor edits in this section for pointedness}

The purpose of pointed stable double Segal spaces is to identify those bisimplicial spaces that have sufficient structure so that we can apply a form of Waldhausen's $S_\bullet$-construction to them.  In addition to being double Segal spaces, we need a way to construct squares analogously to taking pullbacks of cospans or pushouts of spans.  To this end, we introduce the notion of stability for a double Segal space.  Heuristically, one should think of the squares in a stable double Segal space as the analogues of bicartesian commutative squares in an ordinary category.  

Following axiom (K'') for double categories, we want a square
\[ \distsquare{A}{B}{C}{D}{}{}{}{} \]
to be uniquely determined up to isomorphism by the mixed span $C \elto A \mrto B$ and by the mixed cospan $C \mrto D \elto B$. %This notion is stricter than that of axiom (K') from Definition~\ref{reversecgw}, but has the elegant property that the set of squares in the double category is in bijection with the sets of mixed spans and mixed cospans. 
%\jbnoteil{Talk about this idea above somewhere?}

Translating this condition to the double Segal space context, if we think of points in $Y_{1,1}$ as squares, we want the restriction maps $Y_{1,1} \to Y_{0,1} \times_{Y_{0,0}} Y_{1,0}$ and $Y_{1,1} \to Y_{1,0} \times_{Y_{0,0}} Y_{0,1}$, specifying the desired span and cospan, respectively, to be weak equivalences.

\begin{definition} {\cite{BOORSobj}} \label{stable} 
	A double Segal space $Y$ is \emph{stable} if the induced maps 
	\[ Y_{1,1} \to Y_{0,1} \times_{Y_{0,0}} Y_{1,0} \qquad\qquad Y_{1,1} \to Y_{1,0} \times_{Y_{0,0}} Y_{0,1} \]
	are weak equivalences. 
\end{definition}

Another feature we need for an $S_\bullet$-construction is a zero object. % or a more general collection, called an augmentation, of objects that behave locally like a zero object, similarly to Definition~\ref{augmenteddouble} for double categories. 
For bisimplicial spaces, we capture this extra data using a preaugmentation space via the following indexing category with an additional object.

\begin{definition}
	Let $\Sigma$ denote the category obtained by adjoining a new terminal object, denoted by $[-1]$, to the category $\Delta \times \Delta$:
	\[ \begin{tikzcd}
		{[-1]}  \\[-10]
		{} &[-20] {[0,0]} \ar{lu}{} 
		\rar[shift left=2] \rar[shift right=2] 
		\dar[shift left=2] \dar[shift right=2] & 
		{[1,0]} \lar \rar \rar[shift left=3] \rar[shift right=3] \dar[shift left=2] \dar[shift right=2] & 
		{[2,0]} \lar[shift left=1.5] \lar[shift right=1.5] \dar[shift left=2] \dar[shift right=2] &[-25]
		\cdots \\ 
		{} & 
		{[0,1]} \rar[shift left=2] \rar[shift right=2] \uar \dar \dar[shift left=3] \dar[shift right=3] & 
		{[1,1]} \lar \rar \rar[shift left=3] \rar[shift right=3] \uar \dar \dar[shift left=3] \dar[shift right=3] & 
		{[2,1]} \lar[shift left=1.5] \lar[shift right=1.5] \uar \dar \dar[shift left=3] \dar[shift right=3] & 
		\cdots \\
		{} & 
		{[0,2]} \rar[shift left=2] \rar[shift right=2] \uar[shift left=1.5] \uar[shift right=1.5] & 
		{[1,2]} \lar \rar \rar[shift left=3] \rar[shift right=3] \uar[shift left=1.5] \uar[shift right=1.5] & 
		{[2,2]} \lar[shift left=1.5] \lar[shift right=1.5] \uar[shift left=1.5] \uar[shift right=1.5] &
		\cdots \\[-20]
		{} & \vdots & \vdots & \vdots & \ddots. 
	\end{tikzcd} \]
\end{definition}

\begin{definition}
	A \emph{preaugmented bisimplicial space} is a functor $Y \colon \Sigma^{\op} \rightarrow \SSet$.
\end{definition}

The name is justified by the fact that the inclusion $\Delta \times \Delta \hookrightarrow \Sigma$ induces a forgetful functor on diagrams of simplicial sets; we think of the image of a preaugmented bisimplicial space under this functor as its \emph{underlying bisimplicial space}.  Alternatively, a bisimplicial space is preaugmented if it is equipped with an additional space $Y_{-1}$ and a map $Y_{-1} \to Y_{0,0}$.

If we want the space $Y_{-1}$ to behave as if it contains the zero objects in the sense of Definition~\ref{pointeddoublecat}, we need maps $Y_{1,0} \times_{Y_{0,0}} Y_{-1} \to Y_{0,0}$ and $Y_{0,1} \times_{Y_{0,0}} Y_{-1} \to Y_{0,0}$ capturing the desired local universal property up to homotopy.  The former map can be given as the composite
\[ Y_{1,0} \times_{Y_{0,0}} Y_{-1} \to Y_{1,0} \to Y_{0,0} \]
of the projection map followed by the appropriate face map, while the latter can be described as a similar composite
\[ Y_{0,1} \times_{Y_{0,0}} Y_{-1} \to Y_{0,1} \to Y_{0,0}. \]

\begin{definition}{\cite{BOORSobj}} \label{pointed} 
	A preaugmented bisimplicial space is \emph{pointed} if the maps
	\[ Y_{1,0} \times_{Y_{0,0}} Y_{-1} \to Y_{0,0} \qquad \text{and} \qquad Y_{0,1} \times_{Y_{0,0}} Y_{-1} \to Y_{0,0} \]
	described above are weak equivalences and $Y_{-1}$ is contractible.
\end{definition}

%\jbnoteil{Changing 0 to $\ast$ throughout to match notation elsewhere in the paper.}

Depicting elements of $Y_{1,0}$ as arrows $\mrto$, those of $Y_{0,1}$ as arrows $\erto$, and those of $Y_{-1}$ as $\ast$, a preaugmented bisimplicial space $Y$ is pointed when the spaces of diagrams $\ast \mrto A$ and $A \erto \ast$ are weakly equivalent to $Y_{0,0}$ via the map sending either such arrow to $A$.  In other words, a pointed bisimplicial space has a contractible space of objects that has the up-to-homotopy universal property for initial objects with respect to the horizontal morphisms, and similarly has the universal property for terminal objects with respect to vertical morphisms.% that each have the global universal property analogous to the one for the distinguished object in a pointed double category. 

%
%\begin{definition}
%	A (pre)augmented bisimplicial space is a \emph{(pre)augmented double Segal space} if its underlying bisimplicial space is a double Segal space, and it is \emph{stable} if its underlying bisimplicial space is.
%\end{definition}
%
%\begin{definition}
%	Given a double category $\D$ with shared isomorphisms and preaugmentation $\mathcal{A}$, we extend the classifying diagram $N^\square\D$ to a preaugmented bisimplicial space with preaugmentation given by the nerve of the full subgroupoid of objects in $\D$ consisting of those objects in $\mathcal{A}$.
%\end{definition}
%
%\jbnoteil{Experimental pointed version}

In what follows, we focus on the case of pointed bisimplicial spaces.

\begin{definition}
	A pointed bisimplicial space is a \emph{pointed double Segal space} if its underlying bisimplicial space is a double Segal space, and it is \emph{stable} if its underlying bisimplicial space is.
\end{definition}

We can now define a simplicial analogue of the classifying diagram for double categories.

\begin{definition}
	Given a pointed double category $\D$ with shared isomorphisms, we extend the classifying diagram $N^\square\D$ to a preaugmented bisimplicial space via the nerve of the full subgroupoid of zero objects in $\D$.
\end{definition}

We now verify that the classifying diagram of a pointed double category is a pointed bisimplicial space. In the next section we show that the classifying diagram is also stable when $\D$ satisfies axiom (K'') from Definiton~\ref{reversecgw}.

%\jbnoteil{We don't actually talk about its being stable - probably should comment on this at least!}
%\bsnoteil{Added above.}

%\jbnoteil{End of experimental pointed changes}
%\bsnoteil{moved this up from next section and simplified it}

\begin{proposition}\label{pointedproof}
	Given a pointed double category $\D$ with shared isomorphisms, the preaugmented classifying diagram $N^\square\D$ is pointed. Conversely, if $N^\square\D$ is pointed then $\D$ is a pointed double category.
%	A double category $\D$ with shared isomorphisms satisfies axiom (Z') for reverse CGW categories if and only if $\D$ admits a preaugmentation $\mathcal{A}$ such that the preaugmented bisimplicial set $N^\square\D$ is pointed.
\end{proposition}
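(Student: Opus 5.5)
We unwind both pointedness conditions for the classifying diagram in terms of nerves of groupoids. Here $(N^\square\D)_{0,0}$ is the nerve of the groupoid $\iso\D$ of objects and (shared) isomorphisms. $(N^\square\D)_{1,0}$ is the nerve of the groupoid of horizontal morphisms and commutative squares whose vertical legs are isomorphisms, and similarly for $(N^\square\D)_{0,1}$. The preaugmentation $Y_{-1}$ is the nerve of the full subgroupoid $Z$ of zero objects. The fiber products in Definition~\ref{pointed} are then nerves of fiber products of groupoids. Since the source maps out of these arrow groupoids are isofibrations (any isomorphism of the source lifts by precomposition), these strict fiber products compute the correct homotopy type. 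Concretely, $Y_{1,0}\times_{Y_{0,0}}Y_{-1}$ is the nerve of the groupoid $\mathcal{G}_h$ of horizontal morphisms $z\mrto A$ with $z$ a zero object. Its morphisms are pairs of isomorphisms $z\cong z'$, $A\cong A'$ forming commutative squares.

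For the forward direction, two facts suffice. First, $Z$ is a nonempty connected groupoid in which each hom-set is a singleton, because a zero object is initial in $\M$ and isomorphisms are shared. Hence $\nerve Z$ is contractible. Second, the target functor $\mathcal{G}_h\to\iso\D$ is an equivalence of groupoids. It is essentially surjective, since every $A$ receives $z\mrto A$. It is full and faithful: given arrows $u\colon z\mrto A$ and $u'\colon z'\mrto A'$ and an isomorphism $\alpha\colon A\cong A'$, there is a unique isomorphism $\beta\colon z\cong z'$, and $u'\beta=\alpha u$ holds automatically by initiality of $z$. Nerves take equivalences of groupoids to weak equivalences, so the first map of Definition~\ref{pointed} is a weak equivalence. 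The vertical map is handled identically, using that $z$ is terminal in $\E$: an arrow $A\erto z$ is unique, and the compatibility square commutes by terminality.

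For the converse, assume $N^\square\D$ is pointed for some preaugmentation, namely the full subgroupoid on a chosen class $Z$ of objects. Contractibility of $\nerve Z$ makes $Z$ nonempty, so we can fix $z\in Z$. Because $\mathcal{G}_h\to\iso\D$ induces a weak equivalence on nerves, it is an equivalence of groupoids. Essential surjectivity gives, for each $A$, an arrow $z'\mrto A$ with $z'\in Z$. Composing with the unique-up-to-$\pi_0$ isomorphism $z\cong z'$ from connectedness of $Z$ yields an arrow $z\mrto A$, so existence holds.

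The main obstacle is uniqueness. Take $u,v\colon z\mrto A$. Fullness applied to $\id_A$ gives an isomorphism $\beta\colon z\to z$ with $v\beta=u$. Faithfulness says the object $(z\mrto A)$ has automorphism group mapping injectively to $\operatorname{Aut}(A)$. We also need $\beta=\id$, and this comes from contractibility of $\nerve Z$: it forces $\operatorname{Aut}(z)$ trivial, since $\pi_1$ of the nerve of a connected groupoid is $\operatorname{Aut}(z)$. So $u=v$, and $z$ is initial in $\M$. The argument for $z$ being terminal in $\E$ is dual, using the second map. This is exactly where the hypothesis that $Y_{-1}$ is contractible, not just connected, is needed, and the proof should say so explicitly. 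Finally, $z$ is preserved as zero object, so $\D$ is pointed.
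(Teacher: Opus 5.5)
Your proposal is correct and follows essentially the same route as the paper. Both arguments reduce pointedness of $N^\square\D$ to two conditions: the groupoid of zero objects is contractible, and the target functors from horizontal morphisms out of (resp.\ vertical morphisms into) zero objects to $\iso\D$ are equivalences of groupoids. Both verify these by initiality/terminality, and in the converse both get existence from essential surjectivity and uniqueness from fullness together with the triviality of $\operatorname{Aut}(z)$ forced by contractibility. Your use of connectedness of the zero-object groupoid to move the arrow to a fixed $z$ makes explicit a step the paper leaves implicit.
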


\begin{proof}
	%If $\D$ satisfies axiom (Z') then we can take the zero objects as the preaugmentation; we fix a preaugmentation and show that axiom (Z') is satisfied with the entire preaugmentation as zero objects if and only if $N^\square\D$ is pointed.

	%Using a similar argument as above for stabliity, 
	$N^\square\D$ is pointed if and only if the restriction functors 
	\[ \Iso\left(\ast \mrto \bullet, \D\right) \to \Iso\left(\bullet ,\D\right) \qquad \Iso\left(\ast \elto \bullet ,\D\right) \to \Iso\left(\bullet ,\D\right) \]
	are equivalences of groupoids and the subgroupoid of zero objects is contractible. %Here $\ast$ denotes an object in the preaugmentation. 
We consider only the first equivalence; the argument for the second is analogous.

	%Assuming axiom (K''), 
	This functor is surjective on objects, as the space of zero objects is nonempty and every object has a horizontal morphism from each zero object. It is fully faithful as for any shared isomorphism $f \colon A \cong B$ between objects in $\D$ and horizontal morphisms $* \mrto A$, $*' \mrto B$ with $*,*'$ zero objects, there is a unique natural shared isomorphism 
	\[ \begin{tikzcd}
		* \rar[tail] \dar[swap]{\cong} & A \dar{f}[swap]{\cong} \\
		*' \rar[tail] & B,
	\end{tikzcd} \] 
	where the square commutes by initiality. There is a unique morphism between each pair of zero objects, so they form a contractible groupoid.

	Assuming $N^\square\D$ is pointed, by essential surjectivity of this functor for any object $A$ in $\D$ there is an isomorphic object $B \cong A$ in $\D$ and a horizontal morphism $\ast \to B$ with $\ast \in N^\square\D_{-1}$. The composite $\ast \to B \cong A$ then provides a horizontal morphism to $A$. By fully faithfulness any two such morphisms $\ast \to A$ must be related by an automorphism $\ast \to \ast$, but by contractibility of the preaugmentation this automorphism can only be the identity. Hence objects $N^\square\D_{-1}$ are horizontally initial. By an analogous argument these objects are also vertically terminal, so $\D$ is a pointed double category.
\end{proof}

\subsection{Stability via lifting properties in 2-categories}\label{lifting2cat}

The main goal of this subsection is to prove that for a double category $\D$ with shared isomorphisms, satisfying axiom (K'') for reverse CGW categories corresponds to stability of its classifying diagram. Rather than proving this claim directly, we develop the general theory of lifting properties in a 2-category to highlight the key features of the proof; this theory may be of independent interest. %For comparison, we give a direct proof that under the same conditions $\D$ satisfies axiom (Z') for reverse CGW categories if and only if it admits a preaugmentation making its classifying diagram pointed.

These results address the following question for a morphism $A \to B$ in a 2-category: when is the property that ``any map $A \to C$ extends to a map $B \to C$ uniquely up to unique isomorphism'' equivalent to the property that ``the restriction functor $\Homm(B,C) \to \Homm(A,C)$ is an equivalence''?  In a 2-category, we use the notation $\Homm(-,-)$ to indicate the 1-category of morphisms, as opposed to the morphism set $\Hom(-,-)$.  Both of these properties can be expressed in terms of certain kinds of lifting conditions, and to our knowledge they have not previously been defined in the literature.

We begin with a preliminary definition, in which we denote by $\Cat$ the 2-category of categories, functors, and natural transformations, and by $\cat$ its underlying 1-category of categories and functors.    

%\jbnoteil{I really don't like conflating the 2-category with the 1-category here - can this be avoided??}
%\bsnoteil{Fixed, I don't think there's any issue with this. In keeping with our font choices I think it makes sense to use mathbf for 2-categories and mathcal for 1-categories; I've tried to implement this in what follows.}

%\jbnoteil{Also, the ob and mor notation here is awful - is there any way to avoid it?}
%
%\bsnoteil{I'm definitely not attached to it, though I'm not exactly sure how to avoid it. We could maybe write something like $\mathsf{1Cell}_\CC(C,C')$ amd $\mathsf{2Cell}_\CC(C,C')$ instead since I don't think we use ob and mor much other than for $\Homm$s}
%
%\jbnoteil{Okay, I don't think that would improve matters...}

\begin{definition}
	A 2-category $\CC$ has a \emph{tensored interval} if there is a functor $[1] \otimes - \colon \C \to \C$ on the underlying 1-category $\C$ of $\CC$ together with a natural isomorphism
	\[ \Hom_{\C}([1] \otimes C,C') \cong \mor\left(\Homm_\CC(C,C')\right), \]
	along with natural morphisms $[1] \otimes C \to C$, $C \rightrightarrows [1] \otimes C$ representing the natural transformations $\id \colon \ob\left(\Homm_\CC(C,C')\right) \to \mor\left(\Homm_\CC(C,C')\right)$ and $s, t \colon \mor\left(\Homm_\CC(C,C')\right) \rightrightarrows \ob\left(\Homm_\CC(C,C')\right)$ respectively.
\end{definition}

In particular thre is a canonical morphism $C \sqcup C \to [1] \otimes C$, where $[1] \otimes C$ is similar to a cylinder object for $C$.

\begin{remark}
	Having a tensored interval is a weaker version of the notion of being \emph{tensored in $\cat$} \cite[\S4]{KellyAdjunction}, where there is a functor $\otimes \colon \cat \times \C \to \C$ and a natural isomorphism 
	\[ \Hom_{\C}(\A \otimes C,C') \cong \Hom_{\Cat} \left(\A, \Homm_\CC(C, C') \right) \]
	for any small category $\A$.
\end{remark}

\begin{example}
	The 2-category $\Cat$ has a tensored interval, given by the cartesian product $[1] \times C$.  A functor $[1] \times C \to C'$ is precisely a natural transformation of functors $C \to C'$.
\end{example}

\begin{example}
	Given two double functors $F,G \colon \D \to \D'$, a natural shared isomorphism from $F$ to $G$ consists of a natural horizontal isomorphism and a natural vertical isomorphism from $F$ to $G$, such that for each object $A$ of $\D$ the pair $(FA \mrto[\cong] GA, FA \erto[\cong] GA)$ forms a shared isomorphism. Double categories, double functors, and natural shared isomorphisms form a 2-category that we denote by $\DCati$.

%	\jbnoteil{The previous sentence is awful, and I don't really understand what it is trying to say.}
%	\bsnoteil{I've rewritten it, let me know what you think.}
 
	$\DCati$ has a tensored interval where $[1] \otimes \D$ is given by $(\E \rtimes_{\id_\E} \E) \times \D$, where $\E \rtimes_{\id_\E} \E$ is the walking shared isomorphism (see (\ref{walkingsharediso})). A functor $(\E \rtimes_{\id_\E} \E) \times \D \to \D'$ is precisely the data of a natural shared isomorphism between two functors $\D \to \D'$.
\end{example}

\begin{definition}\label{strongcopy}
	Given a morphism $i \colon A \to B$ in $\CC$, let 
	\[ \tlift(i) \coloneqq \colim(B \xleftarrow{i} A \xrightarrow{i} B) \qquad\textrm{and}\qquad \ilift(i) \coloneqq \colim\left(A \leftarrow [1] \otimes A \to [1] \otimes B \right). \]
\end{definition}

The pushout of $B \leftarrow A \to B$ agrees with the pushout of $A \leftarrow A \sqcup A \to B \sqcup B$, which maps to the pushout of $A \leftarrow [1] \otimes A \to [1] \otimes B$ by the canonical morphism $B \sqcup B \to [1] \otimes B$, and analogously for $A$. Therefore, we obtain an induced morphism $\ulift(i) \colon \tlift(i) \to \ilift(i)$ for any $i$.

\begin{lemma}
	Let $i \colon A \to B$ be a morphism in a 2-category $\CC$ with a tensored interval. If an object $X$ is strictly local with respect to $\ulift(i)$, then lifts of morphisms $A \to X$ along $i$ are unique up to a unique invertible 2-cell in $\Homm_\CC(B,X)$ that restricts to the identity on $A \to X$.
\end{lemma}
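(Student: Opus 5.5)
The plan is to translate maps out of the two pushouts $\tlift(i)$ and $\ilift(i)$ into pairs of 1-cells and 2-cells via their universal properties. Then strict $\ulift(i)$-locality becomes a bijection between two sets of data, and reading that bijection gives the claim.

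First, by the pushout defining $\tlift(i)$, a morphism $\tlift(i) \to X$ is exactly a pair of morphisms $g_1, g_2 \colon B \to X$ with $g_1 i = g_2 i$. In other words, it is two lifts along $i$ of a common map $a = g_1 i \colon A \to X$.

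Second, by the pushout defining $\ilift(i)$, a morphism $\ilift(i) \to X$ is a pair consisting of a morphism $[1]\otimes B \to X$ and a morphism $A \to X$ that agree on $[1]\otimes A$. Via the tensored interval isomorphism $\Hom_\C([1]\otimes C, X) \cong \mor(\Homm_\CC(C,X))$, the first component is a 2-cell $\alpha \colon g_1 \Rightarrow g_2$ in $\Homm_\CC(B,X)$. The compatibility condition says that whiskering $\alpha$ along $i$ gives the identity 2-cell on $a$. This uses naturality of the isomorphism in $C$, and the fact that $[1]\otimes A \to A$ represents the identity transformation.

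Third, I check that precomposition with $\ulift(i)$ sends $(\alpha, a)$ to the pair $(s\alpha, t\alpha) = (g_1, g_2)$. This holds because $\ulift(i)$ was built from the canonical map $B\sqcup B \to [1]\otimes B$, which represents $(s,t)$, together with the analogous map for $A$. I expect this bookkeeping step to be the main obstacle. One must verify that the two descriptions of the pushout, namely $B\leftarrow A\to B$ versus $A \leftarrow A\sqcup A \to B\sqcup B$, are compatible with these identifications. The argument is a diagram chase using naturality of the tensored interval isomorphism together with the identities $s\circ \id = t \circ \id = \id$.

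Finally, strict $\ulift(i)$-locality says that the restriction map $\Hom(\ilift(i),X)\to\Hom(\tlift(i),X)$ is a bijection. Given any two lifts $g_1,g_2$ of the same $a$, surjectivity provides a 2-cell $\alpha\colon g_1\Rightarrow g_2$ restricting to $\id_a$. Injectivity says such an $\alpha$ is unique.

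It remains to show that $\alpha$ is invertible. Apply the same bijection to the pair $(g_2,g_1)$ to obtain $\beta\colon g_2\Rightarrow g_1$. The composites $\beta\alpha$ and $\id_{g_1}$ both restrict to $\id_a$ along $i$ and both have boundary $(g_1,g_1)$, so uniqueness for the pair $(g_1,g_1)$ forces $\beta\alpha=\id_{g_1}$. Similarly $\alpha\beta=\id_{g_2}$.

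For this last step I need that whiskering along $i$ respects vertical composition of 2-cells, which is simply 2-functoriality of $-\circ i$. I also need that composites of 2-cells correspond to maps out of $[1]\otimes B$ compatibly. That compatibility is not actually required, since uniqueness is applied only within $\mor(\Homm_\CC(B,X))$.
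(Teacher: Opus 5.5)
Your proposal is correct and follows essentially the same route as the paper. You identify maps out of $\tlift(i)$ and $\ilift(i)$ via the pushouts and the tensored interval, and read off existence and uniqueness of the 2-cell restricting to $\id_a$ from the bijection. You then get invertibility by producing a 2-cell in the reverse direction and using uniqueness for the pairs $(g_1,g_1)$ and $(g_2,g_2)$. The only difference is that you spell out bookkeeping the paper leaves implicit: naturality of the tensored-interval isomorphism, and that $\ulift(i)$ restricts along the maps representing $(s,t)$.
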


\begin{proof}
	Assume that $X$ is strictly $\ulift(i)$-local.  A morphism $\tlift(i) \to X$ amounts to a morphism $A \to X$ and two lifts $B \to X$ along $i$.  On the other hand, a morphism $\ilift(i) \to X$ amounts to a 2-cell $\alpha$ between two morphisms $B \to X$ (corresponding to a map $[1] \otimes B \to X$) and a map $f \colon A \to X$ such that the horizontal composite of $\alpha$ with $i$  (corresponding to the restriction $[1] \otimes A \to [1] \otimes B \to X$) is the identity 2-cell on $f$.  In other words, we obtain a morphism $f \colon A \to X$ and a 2-cell between two lifts $B \to X$ that restricts to the identity on $f$, as desired. To see that this 2-cell is invertible, note that the same argument produces 2-cells in both directions whose composites on either side must agree with the identities by uniqueness.
\end{proof}

We now give the definition of a strongly local object in a 2-category, for which lifts are required to be unique up to unique isomorphism.

\begin{definition}\label{stronglocality}
	Let $i \colon A \to B$ be a morphism in a 2-category $\CC$ with a tensored interval.  An object $X$ of $\CC$ is \emph{strongly local with respect to $i$} if it is $i$-local and strictly $\ulift(i)$-local.
\end{definition}

\begin{remark}\label{bicatlifting}
	In \cite[Definition 3.1]{modelbicats}, a notion of lifting property in a bicategory is introduced, in which lifts must be unique up to isomorphism but not up to \emph{unique} isomorphism as in Definition~\ref{stronglocality}. In the spirit of Lemma~\ref{2localequiv}, an object $X$ satisfies this weaker lifting property with respect to a morphism $i \colon A \to B$ when the induced functor $i^\ast \colon \Homm_\CC(B,X) \to \Homm_\CC(A,X)$ is essentially surjective.
\end{remark}

\begin{example}
	In the 2-category $\Cat$ with tensored interval, consider the inclusion $i \colon \varnothing \rightarrow \ast$ from the empty category to the 1-object category.  A category is local with respect to $i$ when it is nonempty, whereas it is local with respect to $\ulift(i)$ when it has a unique object up to unique isomorphism. In other words, a category is strongly $i$-local if and only if it is a contractible groupoid, equivalent to the terminal category.
\end{example}

\begin{example}
	In $\DCati$, consider the inclusion $\toprightcorner^\vop$ from Definition~\ref{cornerdefs} and Corollary~\ref{reversecgwlift}, which we picture as
	\[ \begin{tikzcd}
		& \bullet \dar[etail] \\
 		\bullet \rar[tail] & \bullet
	\end{tikzcd}
	\qquad \xrightarrow{\toprightcorner^\vop} \qquad
	\begin{tikzcd}
		\bullet \rar[tail] \dar[etail] & \bullet \dar[etail] \\
		\bullet \rar[tail] & \bullet.
		\dist{1-1}{2-2}
	\end{tikzcd} \]

	The double category $\tlift(\toprightcorner^\vop)$ consists of two squares that agree on their bottom and right morphisms. The double category $\ilift(\toprightcorner^\vop)$ contains the squares and commutative diagrams
	\[  \begin{tikzcd}
		C \rar[tail]{g} \dar[etail,swap]{h} & D \dar[etail]{k} \\
		A \rar[tail,swap]{f} & B
		\dist{1-1}{2-2}
	\end{tikzcd}
	\qquad
	\begin{tikzcd}
		C' \rar[tail]{g'} \dar[etail,swap]{h'} & D \dar[etail]{k} \\
		A \rar[tail,swap]{f} & B
		\dist{1-1}{2-2}
	\end{tikzcd}
	\qquad
	\begin{tikzcd}
		A \dar[equals] & \lar[etailhor,swap]{h'} C' \rar[tail]{g'} \dar[tail]{i} \dar[tail,swap]{\cong} & D \dar[equals] \\
		A & \lar[etailhor]{h} C \rar[tail,swap]{g} & D
		\dist{1-1}{2-2}
	\comm{1-2}{2-3}
	\end{tikzcd}
	\qquad
	\begin{tikzcd}
		A \dar[equals] & \lar[etailhor,swap]{h'} C' \rar[tail]{g'} \dar[etail]{i'} \dar[etail,swap]{\cong} & D \dar[equals] \\
		A & \lar[etailhor]{h} C \rar[tail,swap]{g} & D,
		\comm{1-1}{2-2}
		\dist{1-2}{2-3}
	\end{tikzcd} 
	\]
	amounting to two squares which agree on their bottom and right morphisms and a natural shared isomorphism between them.

	A double category is local with respect to $\toprightcorner^\vop$ when every ``mixed cospan'' $\bullet \mrto \bullet \elto \bullet$ belongs to a square, and strongly local when that square is unique up to unique shared isomorphism on its source. This is precisely axiom (K'') from Definition~\ref{reversecgw}.
\end{example}

We have defined strong locality to encode the existence of lifts that are unique up to unique isomorphism, but we can also ask for lifts of 2-cells between morphisms.

\begin{definition}\label{2copy}
	Given a morphism $i \colon A \to B$ in $\CC$, let 
	\[ \arrlift(i) \coloneqq \colim\left([1] \otimes A \leftarrow A \sqcup A \to B \sqcup B\right). \]
	Define $\arrfill(i) \colon \arrlift(i) \to [1] \otimes B$ for any $i$ to be the morphism induced by the maps $[1] \otimes A \to [1] \otimes B$ and $B \sqcup B \to [1] \otimes B$.
\end{definition}

\begin{lemma}\label{2localequiv}
	Let $i \colon A \to B$ be a morphism in a 2-category $\CC$ with a tensored interval.  If an object $X$ is $i$-local and strictly $\arrfill(i)$-local, then the restriction functor $i^\ast \colon \Homm_\CC(B,X) \to \Homm_\CC(A,X)$ is an equivalence of categories.
\end{lemma}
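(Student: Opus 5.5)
We check the three parts of being an equivalence: essential surjectivity, fullness, and faithfulness. Each comes from translating the relevant lifting property through the universal properties of the tensored interval and of the pushout $\arrlift(i)$. By the tensored interval, a morphism $[1] \otimes C \to X$ is the same as a 2-cell between two morphisms $C \to X$. Composing with the natural maps $C \rightrightarrows [1] \otimes C$ recovers its source and target, and precomposing with $[1]\otimes A \to [1] \otimes B$ corresponds to whiskering with $i$, by naturality of the isomorphism $\Hom_{\C}([1] \otimes C, X) \cong \mor(\Homm_\CC(C,X))$ in $C$.

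\textbf{Essential surjectivity.} Since $X$ is $i$-local, every object $f \colon A \to X$ of $\Homm_\CC(A,X)$ equals $g \circ i = i^\ast g$ for some $g \colon B \to X$. So $i^\ast$ is in fact surjective on objects.

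\textbf{Fullness and faithfulness.} Unwind a morphism $\arrlift(i) \to X$ using the pushout $[1]\otimes A \leftarrow A \sqcup A \to B \sqcup B$. It consists of two morphisms $g, g' \colon B \to X$ (from $B \sqcup B$) and a 2-cell $\beta \colon f \Rightarrow f'$ between morphisms $A \to X$ (from $[1]\otimes A$), agreeing on $A \sqcup A$. That is, $f = gi$ and $f' = g'i$, so $\beta \colon i^\ast g \Rightarrow i^\ast g'$ is a morphism in $\Homm_\CC(A,X)$. A morphism $[1]\otimes B \to X$ is a 2-cell $\alpha \colon g \Rightarrow g'$. Its restriction along $\arrfill(i)$ is the triple consisting of the source $g$, the target $g'$, and the whiskering $\alpha i = i^\ast \alpha$. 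Here we use that the components $B \sqcup B \to [1]\otimes B$ of $\arrfill(i)$ are the source and target maps, and that $[1]\otimes A \to [1]\otimes B$ is $[1]\otimes i$.

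Strict $\arrfill(i)$-locality therefore says the following. For every pair $g, g'$ and every morphism $\beta \colon i^\ast g \to i^\ast g'$ in $\Homm_\CC(A,X)$, there is a unique $\alpha \colon g \Rightarrow g'$ with $i^\ast \alpha = \beta$. This is exactly the statement that $i^\ast$ is bijective on hom-sets, i.e.\ fully faithful. Together with essential surjectivity, $i^\ast$ is an equivalence of categories.

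\textbf{Main obstacle.} The main point to get right is the bookkeeping identification. We must check that the maps induced on $\Hom_\C(-,X)$ by the structure maps of the tensored interval really correspond to source, target, and whiskering by $i$. This needs the stated naturality in $C$ of the isomorphism, together with the compatibility of the maps $C \rightrightarrows [1]\otimes C$ with $s$ and $t$. Once this is in place, the pushout computation $\Hom_\C(\arrlift(i),X) \cong \mor\Homm_\CC(A,X) \times_{\ob\Homm_\CC(A,X)^2} \ob\Homm_\CC(B,X)^2$ is formal.
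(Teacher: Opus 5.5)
Your proposal is correct and follows essentially the same route as the paper. You get surjectivity on objects from $i$-locality, and you get full faithfulness by unwinding a map $\arrlift(i) \to X$ as a pair of 1-cells $B \to X$ with a 2-cell between their restrictions along $i$, so that unique extension along $\arrfill(i)$ is exactly bijectivity of $i^\ast$ on hom-sets. You also make explicit a point the paper leaves implicit: the tensored-interval isomorphism must be natural in $C$, so that precomposing with $[1]\otimes i$ corresponds to whiskering by $i$.
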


\begin{proof}
	Since $X$ is $i$-local, the functor $i^\ast$ is surjective on objects. A morphism $\arrlift(i) \to X$ amounts to two maps $B \to X$ along with a 2-cell between their restrictions along $i$ to maps $A \to X$, or in other words, a pair of objects in $\Homm_\CC(B,X)$ and a morphism between their images in $\Homm_\CC(A,X)$. A lift to a morphism $[1] \otimes B \to X$ corresponds to an extension of that 2-cell to the original maps $B \to X$, hence a morphism in $\Homm_\CC(B,X)$.  It follows that $X$ is strictly $\arrfill(i)$-local if and only if $i^\ast$ is fully faithful.
\end{proof}

\begin{remark}
	Note that the converse is not true: if $X$ is $i$-local, then $i^\ast$ is surjective on objects.  For a lifting condition equivalent to essential surjectivity of $i^\ast$, we would need $X$ to have lifts along $i$ only up to invertible 2-cells from $A$ to $X$.  However, for our purposes in this paper we do not need this weaker lifting condition, so we do not pursue this idea further here.
\end{remark}

\begin{definition}\label{2locality}
	An object $X$ in a 2-category $\CC$ with a tensored interval is \emph{(strongly) 2-local with respect to $i$} if it is $i$-local and (strictly) $\arrfill(i)$-local.
\end{definition}

%This 2-locality condition encodes the property of having lifts with respect to $i$ of both morphisms $A \to X$ and 2-cells between them; strong 2-locality further demands that the lifts of 2-cells are unique with respect to their source and target lifts $B \to X$.

\begin{example}
	For the inclusion $\toprightcorner^\vop$ of the walking mixed cospan into the walking square in $\DCati$, $\arrlift(\toprightcorner^\vop)$ consists of the squares
	\[  \begin{tikzcd}[row sep=scriptsize,column sep=scriptsize]
		&[18] C' \ar[tail]{rr} \ar[etail]{dd} &[-18] &[18] D' 	\ar[tail,color=red]{dl}{\cong} \ar[etail]{dd} \\
		C \ar[etail]{dd} & & D \\
		& A' \ar[tail,color=red]{dl}{\cong} \ar[tail]{rr} & & B' 	\ar[tail,color=red]{dl}{\cong} \\
		A \ar[tail]{rr} & & B 
		\ar[phantom,from={2-1},to={4-3},"\square"] \ar[phantom,from={1-2},to={3-4},"\square"]
		\ar[phantom,from={1-4},to={4-3},color=red,"\parallelogramvert"]
		\ar[phantom,from={3-2},to={4-3},color=red,"\circlearrowleft"]
		\ar[tail,from={2-1},to={2-3},crossing over] 	\ar[etail,from={2-3},to={4-3},crossing over]
	\end{tikzcd} 
	\qquad\qquad\qquad
	\begin{tikzcd}[row sep=scriptsize,column sep=scriptsize]
		&[18] C' \ar[tail]{rr} \ar[etail]{dd} &[-18] &[18] D' 	\ar[etailhor,color=red]{dl}{\cong} \ar[etail]{dd} \\
		C \ar[etail]{dd} & & D \\
		& A' \ar[etailhor,color=red]{dl}{\cong} \ar[tail]{rr} & & B' 	\ar[etailhor,color=red]{dl}{\cong} \\
		A \ar[tail]{rr} & & B, 
		\ar[phantom,from={2-1},to={4-3},"\square"] \ar[phantom,from={1-2},to={3-4},"\square"]
	 	\ar[phantom,from={1-4},to={4-3},color=red,"\circlearrowleft"]
	 	\ar[phantom,from={3-2},to={4-3},color=red,"\parallelogram"]
		\ar[tail,from={2-1},to={2-3},crossing over] \ar[etail,from={2-3},to={4-3},crossing over]
	\end{tikzcd}  \]
	where the corresponding red diagonal morphisms in these two partial cubes are shared isomorphisms. The map $\arrfill(\toprightcorner^\vop)$ includes this double category into $[1] \otimes ([1] \boxtimes [1])$, which can be pictured as the two cubes
	\[ \begin{tikzcd}[row sep=scriptsize,column sep=scriptsize]
		&[18] C' \ar[tail,color=red]{dl}{\cong} \ar[tail]{rr} \ar[etail]{dd} &[-18] &[18] D' \ar[tail,color=red]{dl}{\cong} \ar[etail]{dd} \\
		C \ar[etail]{dd} & & D \\
		& A' \ar[tail,color=red]{dl}{\cong} \ar[tail]{rr} & & B' \ar[tail,color=red]{dl}{\cong} \\
		A \ar[tail]{rr} & & B
		\ar[phantom,from={2-1},to={4-3},"\square"] \ar[phantom,from={1-2},to={3-4},"\square"]
		\ar[phantom,from={1-2},to={4-1},color=red,"\parallelogramvert"] \ar[phantom,from={1-4},to={4-3},color=red,"\parallelogramvert"]
		\ar[phantom,from={1-2},to={2-3},color=red,"\circlearrowleft"] 	\ar[phantom,from={3-2},to={4-3},color=red,"\circlearrowleft"]
		\ar[tail,from={2-1},to={2-3},crossing over] \ar[etail,from={2-3},to={4-3},crossing over]
	\end{tikzcd} 
	\qquad\qquad\qquad
	\begin{tikzcd}[row sep=scriptsize,column sep=scriptsize]
		&[18] C' \ar[etailhor,color=red]{dl}{\cong} \ar[tail]{rr} \ar[etail]{dd} &[-18] &[18] D' \ar[etailhor,color=red]{dl}{\cong} \ar[etail]{dd} \\
		C \ar[etail]{dd} & & D \\
		& A' \ar[etailhor,color=red]{dl}{\cong} \ar[tail]{rr} & & B' \ar[etailhor,color=red]{dl}{\cong} \\
		A \ar[tail]{rr} & & B, 
		\ar[phantom,from={2-1},to={4-3},"\square"] \ar[phantom,from={1-2},to={3-4},"\square"]
		\ar[phantom,from={1-2},to={4-1},color=red,"\circlearrowleft"] \ar[phantom,from={1-4},to={4-3},color=red,"\circlearrowleft"]
		\ar[phantom,from={1-2},to={2-3},color=red,"\parallelogram"] \ar[phantom,from={3-2},to={4-3},color=red,"\parallelogram"]
		\ar[tail,from={2-1},to={2-3},crossing over] \ar[etail,from={2-3},to={4-3},crossing over]
	\end{tikzcd} \]
	where again the corresponding red diagonal morphisms are shared isomorphisms.

	To be strongly 2-local with respect to $\toprightcorner^\vop$ therefore means that every mixed cospan belongs to a square and that every natural shared isomorphism between the cospans of two squares extends uniquely to a natural shared isomorphism of the entire squares. These two properties are satisfied precisely when the restriction functor from the category of squares in $\D$ to the category of mixed cospans in $\D$, both with natural shared isomorphisms as morphisms, is both fully faithful and essentially surjective.
\end{example}

We now provide more detail on the relationship between being strongly local and strongly 2-local.

\begin{proposition}\label{2tostrong}
	If an object $X$ in a 2-category with tensored interval is strongly 2-local with respect to a morphism $i \colon A \to B$, then it is also strongly local with respect to $i$.
\end{proposition}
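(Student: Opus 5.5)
Since strong 2-locality already includes $i$-locality, it remains to show that $X$ is strictly $\ulift(i)$-local. I would argue through the hom-categories, using the tensored interval and the colimit definitions to turn maps out of $\tlift(i)$ and $\ilift(i)$ into data in $\Homm_\CC(B,X)$ and $\Homm_\CC(A,X)$. By Lemma~\ref{2localequiv}, strict $\arrfill(i)$-locality gives more than full faithfulness of $i^\ast \colon \Homm_\CC(B,X)\to\Homm_\CC(A,X)$. The proof of that lemma shows that each morphism $\alpha$ in $\Homm_\CC(A,X)$ between $i^\ast g$ and $i^\ast g'$ has exactly one preimage in $\Homm_\CC(B,X)$ between $g$ and $g'$.

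First I unwind a map $\tlift(i)\to X$. By the pushout, it is a pair $g,g' \colon B \to X$ with $gi=g'i=:f$. A map $\ilift(i)\to X$ is, as in the proof of the lemma preceding Definition~\ref{stronglocality}, a map $f\colon A\to X$ together with a 2-cell $\beta$ between two maps $B\to X$ whose whiskering $\beta i$ is the identity 2-cell on $f$. Precomposition with $\ulift(i)$ forgets $\beta$ and keeps its source and target. This comes from the construction of $\ulift(i)$ via the canonical maps $B\sqcup B\to[1]\otimes B$ and $A \sqcup A \to [1]\otimes A$; I will check it using the naturality of $s,t$ in the definition of a tensored interval.

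Existence of the lift: given $(g,g')$ with $gi=g'i=f$, apply fullness of $i^\ast$ to $\id_f \colon i^\ast g \to i^\ast g'$. This gives a 2-cell $\beta\colon g\Rightarrow g'$ with $\beta i=\id_f$, which is a map $\ilift(i)\to X$ restricting to $(g,g')$. Uniqueness: any other extension is a 2-cell $\beta'\colon g\Rightarrow g'$ with $\beta' i=\id_f$. Faithfulness of $i^\ast$, or more directly the uniqueness half of strict $\arrfill(i)$-locality applied to the map $\arrlift(i)\to X$ determined by $(g,g',\id_f)$, forces $\beta=\beta'$. Hence $X$ is strictly $\ulift(i)$-local, and together with $i$-locality this is strong locality.

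The main obstacle is bookkeeping rather than ideas. I must verify that maps out of $\ilift(i)$ really correspond to 2-cells whose restriction is an identity, which requires the composite $[1]\otimes A\to A\to X$ to represent $\id_f$. This follows from the natural map $[1]\otimes C\to C$ representing identities together with naturality of the isomorphism $\Hom_\C([1]\otimes C,C')\cong\mor(\Homm_\CC(C,C'))$ in $C$. I also need the composite $\tlift(i)\to\ilift(i)\to X$ to recover the endpoints. I would state both compatibilities as a short preliminary observation before running the argument above.
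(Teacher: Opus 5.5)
Your argument is correct, but it takes a different route from the paper.

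The paper never unpacks maps into $X$. It observes that pushing $\arrfill(i)$ out along the map $\arrlift(i)\to\tlift(i)$ (induced by $[1]\otimes A\to A$) yields exactly $\ulift(i)\colon\tlift(i)\to\ilift(i)$. It then invokes Lemma~\ref{pushoutlift}: an object that is (strictly) local for a map is (strictly) local for its pushouts.

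You instead translate strict $\arrfill(i)$-locality into bijectivity of $i^\ast$ on hom-sets, and evaluate it at the identity 2-cell $\id_{gi}\colon gi\Rightarrow g'i$. That bijectivity is exactly full faithfulness, not more than it as you say. Your argument is what the paper's pushout does when unwound: the map $\arrlift(i)\to\tlift(i)$ is precisely what specializes the 2-cell on $A$ to an identity.

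The paper's route is more formal. Its only verification is a pasting of pushouts in the underlying 1-category, which uses that $A\sqcup A\to[1]\otimes A\to A$ is the fold map. It shows $\ulift(i)$ lies in the pushout-closure of $\arrfill(i)$ independently of any particular $X$. It also sets up the partial converse, Lemma~\ref{strongto2}, which follows by the same formalism using retracts.

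Your route avoids that colimit identification and makes visible that only full faithfulness of $i^\ast$ at identity 2-cells is used. It is essentially the hom-category argument the paper mentions and deliberately sets aside. The cost is the representability bookkeeping you flag: maps out of $\ilift(i)$ correspond to 2-cells whose whiskering along $i$ is an identity, and restriction along $\ulift(i)$ returns the endpoints. Both follow from naturality of the tensored-interval isomorphism, as you indicate.
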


This result could be proven by a straightforward argument about equivalences of categories, since the preimage of an identity morphism under an equivalence of categories is a contractible groupoid.  Here we instead give an argument based on lifting properties.

\begin{proof}
	Observe that the pushout of the map
	\[ \arrfill(i) \colon \colim\left([1] \otimes A \leftarrow A \sqcup A \to B \sqcup B\right) \to [1] \otimes B \]
	along the map 
	\[ \colim\left([1] \otimes A \leftarrow A \sqcup A \to B \sqcup B\right) \to \colim\left(A \leftarrow A \sqcup A \to B \sqcup B\right) \]
	is precisely $\ilift(i) = \colim\left(A \leftarrow [1] \otimes A \to [1] \otimes B \right)$.  Thus $\ulift(i)$ is a pushout of $\arrfill(i)$, and the result follows from an application of Lemma~\ref{pushoutlift}.
\end{proof}

\begin{example}
	For the inclusion $\toprightcorner^\vop$ in $\DCati$, the double functor $[1] \otimes ([1] \boxtimes [1]) \to \ilift(\toprightcorner^\vop)$ can be pictures as the map
	\[ \begin{tikzcd}[row sep=scriptsize,column sep=scriptsize]
		&[18] C' \ar[color=red]{dl}{\cong} \ar[tail]{rr} \ar[etail]{dd} &[-18] &[18] D' \ar[color=red]{dl}{\cong} \ar[etail]{dd} \\
		C \ar[etail]{dd} & & D \\
		& A' \ar[color=red]{dl}{\cong} \ar[tail]{rr} & & B' \ar[color=red]{dl}{\cong} \\
		A \ar[tail]{rr} & & B, 
		\ar[phantom,from={2-1},to={4-3},"\square"] \ar[phantom,from={1-2},to={3-4},"\square"]
		\ar[phantom,from={1-2},to={4-1},color=red,"\circlearrowleft"] \ar[phantom,from={1-4},to={4-3},color=red,"\circlearrowleft"]
		\ar[phantom,from={1-2},to={2-3},color=red,"\parallelogram"] \ar[phantom,from={3-2},to={4-3},color=red,"\parallelogram"]
		\ar[tail,from={2-1},to={2-3},crossing over] 	\ar[etail,from={2-3},to={4-3},crossing over]
	\end{tikzcd} 
	\qquad\qquad\to\qquad\qquad
	\begin{tikzcd}[row sep=scriptsize,column sep=scriptsize]
		& C' \ar[color=red]{dl}[swap]{\cong} \ar[tail]{dr} \ar[etailhor]{dddl} \\
		C \ar[etail]{dd} & & D \\[10] \\
		A \ar[tail]{rr} & & B
		\ar[phantom,from={2-1},to={4-3},"\square"] \ar[phantom,from={1-2},to={4-3},shift right=3,"\square"]
		\ar[tail,from={2-1},to={2-3},crossing over] \ar[etail,from={2-3},to={4-3},crossing over]
	\end{tikzcd}  \]
sending the objects $A',B'$, and $C'$ in $[1] \otimes ([1] \boxtimes [1])$ to $A,B$, and $C$ in $\ilift(\toprightcorner^\vop)$, respectively, where again the red diagonal arrows denote shared isomorphisms.
\end{example}

While the converse to Proposition~\ref{2tostrong} does not always hold, it does in examples in which the pushout square from $\arrfill(i)$ to $\ulift(i)$ has a left inverse. The following lemma is a direct consequence of Lemma~\ref{retractlift}.

\begin{lemma}\label{strongto2}
	Let $X$ be an object in a 2-category with tensored interval that is strongly local with respect to a morphism $i \colon A \to B$.  If the dashed morphisms in the diagram 
	\[ \begin{tikzcd}
      	\arrlift(i) \rar \dar[swap]{\arrfill(i)} \ar[phantom]{dr}[pos=1]{\ulcorner} & \tlift(i) \rar[densely dashed] \dar[swap]{\ulift(i)} & \arrlift(i) \dar[swap]{\arrfill(i)} \\ 
      	{[1]} \otimes B \rar & \ilift(i) \rar[densely dashed] & {[1]} \otimes B
	\end{tikzcd}\] 
	exist and present $\arrfill(i)$ as a retract of $\ulift(i)$, then $X$ is strongly 2-local with respect to $i$.
\end{lemma}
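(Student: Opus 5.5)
Strong 2-locality with respect to $i$ means being $i$-local and strictly $\arrfill(i)$-local. Since $X$ is assumed strongly local with respect to $i$, it is $i$-local by definition, so only strict $\arrfill(i)$-locality needs proof. The plan is to reduce this to strict $\ulift(i)$-locality, which $X$ has by hypothesis, using Lemma~\ref{retractlift}.

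First I would check that the hypothesis really gives a retraction in the arrow category of the underlying 1-category $\C$. We need the composite of the top row $\arrlift(i) \to \tlift(i) \dashrightarrow \arrlift(i)$ to be the identity. Likewise the bottom row $[1] \otimes B \to \ilift(i) \dashrightarrow [1] \otimes B$ must compose to the identity. Both squares must commute: the left one is the pushout square from the proof of Proposition~\ref{2tostrong}, and the right one commutes by assumption. Under these conditions $\arrfill(i)$ is a retract of $\ulift(i)$ in the sense required by Lemma~\ref{retractlift}.

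Next I would apply Lemma~\ref{retractlift} with $i' = \arrfill(i)$ and the retracting morphism $\ulift(i)$. Since $X$ is strictly $\ulift(i)$-local, that lemma gives that $X$ is strictly $\arrfill(i)$-local. Concretely, a map $\arrlift(i) \to X$ is precomposed with the top dashed map to give a map $\tlift(i) \to X$. This extends uniquely along $\ulift(i)$, and precomposing the extension with $[1]\otimes B \to \ilift(i)$ gives the lift. Uniqueness follows because any lift $[1]\otimes B \to X$ composed with the bottom dashed map yields a lift for $\ulift(i)$.

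Combining this with $i$-locality shows that $X$ is strongly 2-local with respect to $i$, by Definition~\ref{2locality}. I expect no step to be a real obstacle, since the content is entirely the cited lemma. The only subtlety is confirming that strictness transfers along retracts, and Lemma~\ref{retractlift} already covers this.
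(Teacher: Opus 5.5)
Your proposal is correct and matches the paper, which derives the lemma directly from Lemma~\ref{retractlift}: $i$-locality comes from strong locality, and strict $\ulift(i)$-locality transfers along the retract to strict $\arrfill(i)$-locality. Your explicit existence and uniqueness check just unpacks that lemma, including the strict case that the paper's proof of Lemma~\ref{retractlift} leaves implicit.
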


We can use this result to show that axiom (K'') from Definition~\ref{reversecgw} for a double category with shared isomorphisms corresponds to stability of its classifying diagram.

\begin{proposition}\label{stableproof}
	A double category $\D$ with shared isomorphisms satisfies axiom (K'') for reverse CGW categories if and only if is classifying diagram $N^\square\D$ is a stable double Segal space.
\end{proposition}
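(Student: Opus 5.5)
The plan is to reduce stability of $N^\square\D$ to a statement about equivalences of groupoids. Then I would match that statement with strong locality and strong 2-locality in $\DCati$ for the corner inclusions $\toprightcorner^\vop$ and $\bottomleftcorner^\vop$. Since $N^\square\D$ is the triple nerve of the classifying triple category, it is a strict double Segal space, so only the stability maps need attention. Write $Y=N^\square\D$. Each $Y_{m,n}$ is the nerve of a groupoid: the grids in $\D$ with natural shared isomorphisms between them. The nerve preserves strict pullbacks, so $Y_{0,1}\times_{Y_{0,0}}Y_{1,0}$ is the nerve of a groupoid. Its objects are mixed spans $C \elto A \mrto B$. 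Its morphisms are pairs of natural isomorphisms agreeing on $A$, that is, natural shared isomorphisms of spans.

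In the notation of $\DCati$, this groupoid is $\Homm(\vh^\vop,\D)$, and $Y_{1,1}$ is $\Homm([1]\boxtimes[1],\D)$. A map of nerves of groupoids is a weak equivalence exactly when the underlying functor is an equivalence of groupoids. Hence the span half of stability is equivalent to the statement that the restriction functor
\[ (\bottomleftcorner^\vop)^\ast \colon \Homm([1]\boxtimes[1],\D) \to \Homm(\vh^\vop,\D) \]
is an equivalence. The cospan half is the same statement for $\toprightcorner^\vop$. As in the example following Definition~\ref{stronglocality}, axiom (K'') says exactly that $\D$ is strongly local with respect to $\{\toprightcorner^\vop,\bottomleftcorner^\vop\}$. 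So the proposition reduces to the following claim: for $i$ either corner inclusion, $\D$ is strongly $i$-local if and only if $i^\ast$ is an equivalence. I treat $i=\toprightcorner^\vop$; the other case is symmetric.

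Suppose $i^\ast$ is an equivalence. Fully faithfulness is precisely strict $\arrfill(i)$-locality, by the argument in Lemma~\ref{2localequiv}. For $i$-locality, essential surjectivity gives, for any mixed cospan $C\mrto D\elto B$, a square whose cospan $C'\mrto D'\elto B'$ is related to it by a natural shared isomorphism. I would then paste the basic squares \eqref{basicsquares} attached to these shared isomorphisms onto the sides of that square, which is available since $\D$ has shared isomorphisms. This produces a square whose cospan is exactly the given one. Thus $\D$ is strongly 2-local with respect to $i$, and Proposition~\ref{2tostrong} gives strong locality, i.e.\ axiom (K'').

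Conversely, assume (K''). Then $i^\ast$ is surjective on objects, so it remains to show it is fully faithful. Take two squares $S,S'$ and a natural shared isomorphism $\theta$ between their cospans. I would transport $S'$ along $\theta$, again by pasting the basic squares from (I') onto its cospan side. This yields a square $S''$ with the same cospan as $S$, together with a natural shared isomorphism $S'\cong S''$ extending $\theta$. Axiom (K'') then gives a unique natural shared isomorphism $S''\cong S$ that is the identity on the cospan. Composing the two gives an extension of $\theta$ to a morphism $S'\to S$ in $\Homm([1]\boxtimes[1],\D)$.

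For uniqueness, suppose $\alpha,\beta$ are two extensions of $\theta$. Then $\alpha\beta^{-1}$ is an automorphism of $S$ restricting to the identity on the cospan, so it is the identity by the uniqueness clause in (K''). An alternative route would be to build the retraction required in Lemma~\ref{strongto2}, but the direct argument seems cleaner.

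The main obstacle I expect is the transport step used in both directions. One must check that pasting the squares \eqref{basicsquares} along a natural shared isomorphism of cospans really produces a square in $\D$ with the prescribed boundary. One must also check that the resulting comparison is a natural shared isomorphism of squares, i.e.\ a cube of the form \eqref{naturalcubes}. Both checks use the uniqueness-relative-to-boundary clause of Definition~\ref{sharedisos} and the total antiglobularity from Proposition~\ref{sharedisochar}. The rest of the argument is formal bookkeeping with nerves of groupoids.
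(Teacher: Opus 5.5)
Your proof is correct. Its outline matches the paper's: stability is reduced to the restriction functors on groupoids of squares being equivalences, these are compared with strong (2-)locality for $\toprightcorner^\vop$ and $\bottomleftcorner^\vop$, and Proposition~\ref{2tostrong} gives stability $\Rightarrow$ (K'').

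\textbf{Where you diverge: the direction (K'') $\Rightarrow$ stability.}
\begin{itemize}
\item \emph{The paper's route.} It tries to get strict $\arrfill(i)$-locality formally from Lemma~\ref{strongto2}. To do so it builds a double functor $\ilift(\toprightcorner^\vop)\to[1]\otimes([1]\boxtimes[1])$ that sends the square through $C'$ to its composite with the shared-isomorphism squares. This is your transport step, packaged as a map of walking shapes.
\item \emph{Your route.} You run the transport on actual data. You move $S'$ along $\theta$ to get $S''$, then use the unique-isomorphism clause of (K'') twice: once to compare $S''$ with $S$, and once to show the extension of $\theta$ is unique.
\end{itemize}

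Your choice is not merely cleaner. The retract required by Lemma~\ref{strongto2} would need the pushout map $\arrlift(i)\to\tlift(i)$ to be a split monomorphism. Here that is impossible, because the map identifies $A',B',D'$ with $A,B,D$. The maps the paper actually constructs are sections of these quotient maps. They exhibit $\ulift(i)$ as a retract of $\arrfill(i)$, which only reproves Proposition~\ref{2tostrong}. So your elementwise argument is what actually establishes fullness.

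You also make explicit a point the paper passes over: essential surjectivity of $i^\ast$ upgrades to genuine $i$-locality by the same transport.

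\textbf{The step you flag does go through.} Write $g'\colon A'\to B'$ and $g\colon A\to B$ for the horizontal edges of the two cospans, and $\theta_A,\theta_B,\theta_D$ for the components of $\theta$.
\begin{enumerate}
\item Compose $S'$ horizontally with the square whose top and bottom are $\theta_D,\theta_B$. This square exists by vertical naturality of $\theta$ and Definition~\ref{sharedisos}.
\item Compose the result vertically with the square whose top is $\theta_B g'=g\theta_A$, whose left side is $\varphi(\theta_A)$, whose right side is $\id_B$, and whose bottom is $g$. Call the outcome $S''$.
\item The cube $S'\cong S''$ requires an equation of horizontal composites of squares. Factor the left face, using the uniqueness clause of Definition~\ref{sharedisos}, as the identity on the left edge of $S'$ stacked on the second square of \eqref{basicsquares} for $\theta_A$.
\item By interchange, the required equation then reduces to a horizontally globular square with top and bottom both equal to $g\theta_A$. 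This square is an identity by antiglobularity.
\end{enumerate}
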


For the purpose of this proposition and the next, let $\Iso(\mathbb{C},\D)$ denote the groupoid of double functors $\mathbb{C} \to \D$ and natural shared isomorphisms. 

\begin{proof}
	As its components are nerves of groupoids, $N^\square\D$ is stable if and only if the restriction functors 
	\[ \Iso\left(\distsquare{\bullet}{\bullet}{\bullet}{\bullet}{}{}{}{},\D\right) \to 
	\Iso\left(\begin{tikzcd}  \bullet \rar[tail] & \bullet \dar[etail] \\ & \bullet  \end{tikzcd},\D\right) \qquad 
	\Iso\left(\distsquare{\bullet}{\bullet}{\bullet}{\bullet}{}{}{}{},\D\right) \to 
	\Iso\left(\begin{tikzcd}  \bullet \dar[etail] \\ \bullet \rar[tail] & \bullet \end{tikzcd} ,\D\right) \]
	are equivalences of groupoids. The double category $\D$ satisfies this condition if and only if it is strongly 2-local with respect to the double functors $\toprightcorner^\vop$ and $\bottomleftcorner^\vop$. As axiom (K'') is equivalent to $\D$ being strongly local with respect to the same two double functors, by Proposition~\ref{2tostrong} and Lemma~\ref{strongto2} it suffices to construct the dashed sections in the diagram
	\[ \begin{tikzcd}
      	\arrlift(\toprightcorner^\vop) \rar \dar[swap]{\arrfill(\toprightcorner^\vop)} \ar[phantom]{dr}[pos=1]{\ulcorner} & \tlift(\toprightcorner^\vop) \rar[densely dashed] \dar[swap]{\ulift(\toprightcorner^\vop)} & \arrlift(\toprightcorner^\vop) \dar[swap]{\arrfill(\toprightcorner^\vop)} \\ 
      	{[1]} \otimes ([1] \boxtimes [1]) \rar & \ilift(\toprightcorner^\vop) \rar[densely dashed] & {[1]} \otimes ([1] \boxtimes [1]);
	\end{tikzcd}\] 
	the argument for the corresponding maps using $\bottomleftcorner$ is entirely analogous.

	The section $\ilift(\toprightcorner^\vop) \to [1] \otimes ([1] \boxtimes [1])$ can be pictured as a map
	\[ \begin{tikzcd}[row sep=scriptsize,column sep=scriptsize]
		& C' \ar[color=red]{dl}[swap]{\cong} \ar[tail]{dr} \ar[etailhor]{dddl} \\
		C \ar[etail]{dd} & & D \\[10] \\
		A \ar[tail]{rr} & & B
		\ar[phantom,from={2-1},to={4-3},"\square"] \ar[phantom,from={1-2},to={4-3},shift right=3,"\square"]
		\ar[tail,from={2-1},to={2-3},crossing over] \ar[etail,from={2-3},to={4-3},crossing over]
	\end{tikzcd} 
	\qquad\qquad\to\qquad\qquad
	\begin{tikzcd}[row sep=scriptsize,column sep=scriptsize]
		&[18] C' \ar[color=red]{dl}{\cong} \ar[tail]{rr} \ar[etail]{dd} &[-18] &[18] D' \ar[color=red]{dl}{\cong} \ar[etail]{dd} \\
		C \ar[etail]{dd} & & D \\
		& A' \ar[color=red]{dl}{\cong} \ar[tail]{rr} & & B' \ar[color=red]{dl}{\cong} \\
		A \ar[tail]{rr} & & B, 
		\ar[phantom,from={2-1},to={4-3},"\square"] \ar[phantom,from={1-2},to={3-4},"\square"]
		\ar[phantom,from={1-2},to={4-1},color=red,"\circlearrowleft"] \ar[phantom,from={1-4},to={4-3},color=red,"\circlearrowleft"]
		\ar[phantom,from={1-2},to={2-3},color=red,"\parallelogram"] \ar[phantom,from={3-2},to={4-3},color=red,"\parallelogram"]
		\ar[tail,from={2-1},to={2-3},crossing over] \ar[etail,from={2-3},to={4-3},crossing over]
	\end{tikzcd} \]
	where the diagonal arrows denote shared isomorphisms. This double functor is defined by sending each object among $A,B,C,C',D$ in $\ilift(\toprightcorner^\vop)$ to itself in $[1] \otimes ([1] \boxtimes [1])$, where the ``back'' square containing $C'$ is sent to the composite
	\[ \begin{tikzcd}
		C' \rar[tail] \dar[etail] & D' \rar{\cong} \dar[etail] & D \dar[etail] \\
		A' \rar[tail] \dar[swap]{\cong} & B' \rar{\cong} \dar{\cong} & B \dar[equals] \\
		A \rar[tail] & B \rar[equals] & B. 
		\dist{1-1}{2-2} \dist{1-2}{2-3} \dist{2-1}{3-2} \dist{2-2}{3-3}
	\end{tikzcd} \]
	This double functor is a section, as the map $[1] \otimes ([1] \boxtimes [1]) \to \ilift(\toprightcorner^\vop)$ sends each of the objects $A,B,C,C'$, and $D$ in $[1] \otimes ([1] \boxtimes [1])$ to itself in $\ilift(\toprightcorner^\vop)$. It also restricts to a section $\tlift(\toprightcorner^\vop) \to \arrlift(\toprightcorner^\vop)$ by discarding the shared isomorphism $C' \cong C$ and all squares containing it from both the domain and codomain. We therefore have both of the sections required to apply Lemma~\ref{strongto2}, completing the proof.
\end{proof}

The key idea in the previous proof is that the three squares in $[1] \otimes ([1] \boxtimes [1])$ containing the object $B'$ can be ``composed'' into a single square. This fact relies on the specific composition available in a double category and the variety of squares between shared isomorphisms, rather than any general result about lifts unique up to isomorphism.  

%\jbnoteil{???}\bsnoteil{Is this clearer? I'm trying to convey that this result shouldn't be seen as obvious}

%We now prove an analogous result for axiom (Z') from Definition~\ref{reversecgw} and pointedness of the classifying diagram. In this simpler case we give a proof without relying on our results for 2-categorical lifting properties, to illustrate the different possible approaches. 
%
%\bsnoteil{This can also be done using a lifting properties argument, and I'm not sure if the value of another example of that outweighs that of demonstrating this different type of proof. Here the direct proof isn't as long as it was for stability, while the relevant lifting property with respect to the functor $(0 \;\;\; \bullet) \hookleftarrow (0 \to \bullet)$ is a bit less intuitive I think. I don't think the length would be much different either way.}

\subsection{Reverse CGW categories as pointed stable double Segal spaces} \label{subsec.cgw_psdss}

%\bsnoteil{made minor edits in this section for pointedness}

We now summarize our results on lifting properties and use them to show that classifying diagrams of reverse CGW categories are pointed stable double Segal spaces. We also identify precisely when a pointed stable double Segal space arises in this manner. %We assume throughout that a reverse CGW category is preaugmented \jbnoteil{adapt to pointed} by its set of zero objects satisfying the conditions of axiom (Z').

Using Corollaries~\ref{doubleclassifyinglift} and~\ref{reversecgwlift}, along with Lemma~\ref{compositelift}, we can characterize the image of reverse CGW categories in pointed bisimplicial spaces under the classifying diagram functor $N^\square$.  Recall from Corollary~\ref{classifyingliftingcondition} that a double category is isomorphic to the classifying diagram of a category if and only if it is strictly local with respect to
\[ S_{\cl} = \{\flatten,\truncateh,\rotater,\rotatel,\V(\invert),\doublerotate,\hiso\}, \]
meaning it is simple, antiglobular, vertically flexible, and the vertical category includes into the horizontal category as its maximal subgroupoid.

\begin{corollary}\label{classifyingcgwlift}  
	A pointed bisimplicial space $Y$ is isomorphic to the classifying diagram of a reverse CGW category if and only if it is strictly local with respect to
\[
\left\{\iota_\ell \boxtimes \iota_m \boxtimes \iota_n \mid \ell,m,n \geq 0\right\} \cup N^\square\{\H(\mono),\V(\mono^\opp)\} %\cup N^\square\{\H(\uniquearrow),\V(\uniquearrow^\opp)\}
\]
\[
\cup \nerve^{\mycube}\bigg{(}\left(\H(S_{\cl})\right) \cup \left(\V(S_{\cl})\right) \cup \left([1] \boxtimes_v S_{\cl}\right) \cup \left([1] \boxtimes_h S_{\cl})\right)\bigg{)}
\]
and also strongly $N^\square\{\toprightcorner^\vop, \bottomleftcorner^\vop\}$-local.
\end{corollary}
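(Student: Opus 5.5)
The plan is to factor the classifying diagram functor on reverse CGW categories as the composite of the inclusion of reverse CGW categories into pointed double categories with shared isomorphisms, followed by $N^\square$, and to recognize each factor's image with lifting conditions, then combine them via Lemma~\ref{compositelift}. First, Corollary~\ref{doubleclassifyinglift}, together with the pointed refinement from Proposition~\ref{pointedproof}, says that a pointed bisimplicial space (viewed as a trisimplicial set) is isomorphic to $N^\square\D$ for a pointed double category $\D$ with shared isomorphisms exactly when it is strictly local with respect to the triple Segal maps $\iota_\ell \boxtimes \iota_m \boxtimes \iota_n$ and the $\nerve^{\mycube}$-images of $\H(S_{\cl})$, $\V(S_{\cl})$, $[1]\boxtimes_v S_{\cl}$, $[1]\boxtimes_h S_{\cl}$. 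The functor $N^\square$ from pointed double categories with shared isomorphisms to pointed bisimplicial spaces is fully faithful, because it factors as the classifying triple category construction (fully faithful: natural isomorphisms and cubes are determined by the double category) followed by the fully faithful triple nerve. So by Lemma~\ref{functorlift} it only remains to translate the remaining reverse CGW axioms from Corollary~\ref{reversecgwlift} into conditions on $Y$.

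Within pointed double categories with shared isomorphisms, axioms (I') and (Z') are automatic, so the lifting properties $\truncateh,\truncatev,\hiso,\viso$ from Corollary~\ref{reversecgwlift} become redundant. For axiom (M'), I would apply Lemma~\ref{functorlift} to the fully faithful $N^\square$ (or equivalently the adjunction with a left adjoint, as in Example~\ref{monoadj}) to see that $\D$ is strictly $\{\H(\mono),\V(\mono^\opp)\}$-local if and only if $N^\square\D$ is strictly local with respect to $N^\square\H(\mono)$ and $N^\square\V(\mono^\opp)$. These are the classifying diagrams of these double functors, extended through the triple structure with only identity transverse isomorphisms. One has to check that maps out of these nerves into $N^\square\D$ see only the underlying double category $\D$. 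This holds because they are double nerves regarded as constant in the transverse direction, so their maps into $N^\square\D$ factor through level $0$, which is $\nerve^\square\D$.

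The main step is axiom (K''). By Proposition~\ref{stableproof}, (K'') is equivalent to stability of $N^\square\D$, i.e.\ to the restriction functors from squares to mixed spans or cospans being equivalences of groupoids. Equivalently, $\D$ is strongly (and strongly 2-) local in $\DCati$ with respect to $\toprightcorner^\vop$ and $\bottomleftcorner^\vop$. I then need to transport strong locality along $N^\square$. To do this I would define the tensored interval on the target side by the nerve of $[1]\otimes-$, so that $N^\square$ carries $\tlift$, $\ilift$ and $\ulift$ to the corresponding constructions applied to $N^\square\toprightcorner^\vop$. Here $N^\square$ preserves the relevant pushouts (up to the strict Segal/classifying localizations already imposed), and maps out of $N^\square(\E\rtimes_{\id_\E}\E)\times-$ correspond to natural shared isomorphisms by the description of $N^\square\D$ in terms of groupoids of grids. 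This compatibility of $N^\square$ with the colimits defining $\tlift$ and $\ilift$ is the obstacle I expect. If strict preservation fails, I would instead interpret ``strongly $N^\square\{\toprightcorner^\vop,\bottomleftcorner^\vop\}$-local'' directly as the statement that lifts exist and are unique up to unique transverse isomorphism, which can be read off from the spaces $Y_{1,1,*}$. That statement matches (K'') through Proposition~\ref{stableproof}. Assembling the three pieces with Lemma~\ref{compositelift} then gives the corollary.
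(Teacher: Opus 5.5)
Your proposal is correct and is essentially the paper's argument. The paper obtains the corollary simply by combining Corollaries~\ref{doubleclassifyinglift} and~\ref{reversecgwlift} through Lemma~\ref{compositelift}, using Proposition~\ref{pointedproof} for pointedness, and your added details fill in what it leaves implicit: full faithfulness of $N^\square$, the redundancy of (I') and (Z'), and reading (M') off the transversally discrete nerves.

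Your worry about $N^\square$ preserving the pushouts that define $\tlift$ and $\ilift$ does no harm. Because $N^\square$ is fully faithful and preserves products, maps from those pushouts into any $N^\square\D$ correspond exactly to double functors out of the corresponding $\DCat$-pushouts. So Lemma~\ref{functorlift} transports strong locality directly, without needing the detour through Proposition~\ref{stableproof}.
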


Table~\ref{classifyingcgwlifttable} specifies the specific purpose and reference for each component of this set.

\begin{table}
\centering
\begin{tabular}{|l|p{7.5cm}|p{3cm}|}
\hline Morphisms & significance & reference \\
\hline $\left\{\iota_\ell \boxtimes \iota_m \boxtimes \iota_n \mid \ell,m,n \geq 0\right\}$ & $Y$ is triple Segal, as the triple nerve of a triple category & Proposition~\ref{tripleSegallocal} \\
\hline $N^\square\{\H(\mono),\V(\mono^\opp)\}$ & every horizontal morphism of $\D$ is monic and every vertical morphism is epic & Axiom (M'), \newline Definition~\ref{reversecgw} \\
%\hline $N^\square\{\H(\uniquearrow),\V(\uniquearrow^\opp)\}$ & every object in $\D$ admits a unique horizontal morphism to (and vertical morphism from) every object in the preaugmentation & Axiom (Z'), \newline Definition~\ref{reversecgw} \\
\hline $N^\square\{\toprightcorner^\vop, \bottomleftcorner^\vop\}$ & every mixed span or cospan extends to a square in $\D$ uniquely up to unique isomorphism (by strong locality) & Axiom (K''), \newline Definition~\ref{reversecgw} \\
\hline $\nerve^{\mycube} \left(\H(S_{\cl})\right)$ & $Y_{*,0}$ is isomorphic to the classifying diagram of a category & Proposition~\ref{ndchar} \\
\hline $\nerve^{\mycube} \left(\V(S_{\cl})\right)$ & $Y_{0,*}$ is isomorphic to the classifying diagram of a category & Proposition~\ref{ndchar} \\
\hline $\nerve^{\mycube} \left([1] \boxtimes_v S_{\cl}\right)$ & $Y_{*,1}$ is isomorphic to the classifying diagram of a category & Proposition~\ref{ndchar} \\
\hline $\nerve^{\mycube} \left([1] \boxtimes_h S_{\cl}\right)$ & $Y_{1,*}$ is isomorphic to the classifying diagram of a category & Proposition~\ref{ndchar} \\
\hline
\end{tabular}
\caption{}
\label{classifyingcgwlifttable}
\end{table}

We can now conclude with our main results comparing reverse CGW categories and pointed stable double Segal spaces.

\begin{theorem}
	If $\D$ is a reverse CGW category, then $N^\square\D$ is a pointed stable double Segal space.
\end{theorem}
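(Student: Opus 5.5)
The plan is to assemble the theorem from three results already established: Proposition~\ref{ndchar} for the double Segal condition, Proposition~\ref{stableproof} for stability, and Proposition~\ref{pointedproof} for pointedness. By Definition~\ref{reversecgw}, a reverse CGW category $\D$ satisfies axiom (I'), so it has shared isomorphisms and the classifying diagram $N^\square\D$ of Definition~\ref{doubleclassifyingdiagram} makes sense. By axiom (Z') it is a pointed double category, so $N^\square\D$ carries its preaugmentation given by the nerve of the groupoid of zero objects.

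\emph{Double Segal.} By Proposition~\ref{ndchar}, $N^\square\D$ is a strict double Segal space. Concretely, $(N^\square\D)_{m,n}$ is the nerve of the groupoid of $(m\times n)$-grids of squares in $\D$. Such a grid, and a natural shared isomorphism between two grids, are determined by their restrictions to the columns (respectively rows), compatibly along shared edges. Hence the Segal maps are isomorphisms of simplicial sets, and in particular weak equivalences. The only point to check is that a natural isomorphism of grids is exactly a compatible family of natural isomorphisms of the individual squares; this holds because cubes in the classifying triple category compose, as in the proof of Proposition~\ref{triplechar}.

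\emph{Stable.} By axiom (K''), together with the shared isomorphisms from (I'), Proposition~\ref{stableproof} gives that $N^\square\D$ is stable. In that proof, the restriction maps $Y_{1,1}\to Y_{0,1}\times_{Y_{0,0}}Y_{1,0}$ and $Y_{1,1}\to Y_{1,0}\times_{Y_{0,0}}Y_{0,1}$ are identified with nerves of the restriction functors from the groupoid of squares to the groupoids of mixed spans and mixed cospans. One small point needs attention: the homotopy fiber product over $Y_{0,0}$ should agree with the strict one here, so that equivalence of groupoids translates into weak equivalence. This holds because $Y_{0,1}\to Y_{0,0}$ is the nerve of an isofibration of groupoids: any isomorphism of targets lifts, since isomorphisms are shared.

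\emph{Pointed.} By axiom (Z'), Proposition~\ref{pointedproof} gives that the preaugmented classifying diagram is pointed: the maps $Y_{1,0}\times_{Y_{0,0}}Y_{-1}\to Y_{0,0}$ and $Y_{0,1}\times_{Y_{0,0}}Y_{-1}\to Y_{0,0}$ are weak equivalences, and $Y_{-1}$ is contractible.

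Combining the three parts shows that $N^\square\D$ is a pointed stable double Segal space. I expect essentially no obstacle, since each property is a cited proposition. The only subtlety is the homotopical fiber-product issue noted in the stability step, which is handled by the isofibration observation.
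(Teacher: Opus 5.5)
Your proposal is correct and follows the paper's proof exactly: strictly double Segal from $N^\square\D$ being a triple nerve (the easy direction of Proposition~\ref{ndchar}), stability from axiom (K'') via Proposition~\ref{stableproof}, and pointedness from axiom (Z') via Proposition~\ref{pointedproof}. Your isofibration remark is harmless but not needed for the definition of stability as stated, because the nerve preserves fiber products: the strict fiber product of nerves is already the nerve of the groupoid of mixed spans (resp.\ cospans). Note also that for the span map the relevant maps to $Y_{0,0}$ are source maps rather than target maps, though these are likewise isofibrations.
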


\begin{proof}
	 Since it is the triple nerve of a triple category, $N^\square\D$ is strictly double Segal. By Proposition~\ref{stableproof}, axiom (K'') for reverse CGW categories implies that $N^\square\D$ is stable. By Proposition~\ref{pointedproof}, axiom (Z') implies that $N^\square\D$ is pointed.
\end{proof}

We can also characterize what additional properties are needed for a pointed stable double Segal space to arise from a reverse CGW category.

\begin{theorem}
	A pointed stable double Segal space is isomorphic to the classifying diagram of a reverse CGW category if and only if it is isomorphic to the classifying diagram of a double category with shared isomorphisms in which all horizontal morphisms are monic and all vertical morphisms are epic.
\end{theorem}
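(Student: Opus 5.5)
The forward implication is immediate from the definitions. A reverse CGW category $\D$ satisfies axiom (I'), so it is a double category with shared isomorphisms, and axiom (M') says exactly that all horizontal morphisms are monic and all vertical morphisms are epic. Hence if $Y \cong N^\square\D$ with $\D$ reverse CGW, then $Y$ is isomorphic to the classifying diagram of such a double category.

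For the converse, let $Y$ be a pointed stable double Segal space with $Y \cong N^\square\D$. Here $\D$ has shared isomorphisms, its horizontal morphisms are monic and its vertical morphisms are epic. Axioms (I') and (M') then hold by hypothesis, and it remains to verify (Z') and (K'').

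\emph{Axiom (K'').} The underlying bisimplicial space of $Y$ is a stable double Segal space. Stability is invariant under levelwise isomorphism, so $N^\square\D$ is stable. Proposition~\ref{stableproof} then gives that $\D$ satisfies (K'').

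\emph{Axiom (Z').} The isomorphism $Y \cong N^\square\D$ should be understood as an isomorphism of preaugmented bisimplicial spaces. This is the main point requiring care: we must make sure the preaugmentation $Y_{-1}$ corresponds to a full subgroupoid of objects of $\D$, so that the converse half of Proposition~\ref{pointedproof} applies. I would handle it as follows. By definition the preaugmented classifying diagram uses the nerve of a full subgroupoid of objects, and the isomorphism identifies $Y_{-1} \to Y_{0,0}$ with such an inclusion. The converse part of Proposition~\ref{pointedproof} (whose proof only uses the preaugmentation objects, not a pre-chosen zero object) then shows that these objects are horizontally initial and vertically terminal, so $\D$ is pointed. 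Alternatively, if the preaugmentation is a general space over $Y_{0,0}$, contractibility of $Y_{-1}$ lets us replace it by the full subgroupoid on a single object in its image without changing pointedness.

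All four axioms of Definition~\ref{reversecgw} therefore hold, so $\D$ is a reverse CGW category and $Y$ is isomorphic to its classifying diagram. Equivalently, one can read this off from Corollary~\ref{classifyingcgwlift}: stability together with the shared-isomorphism classifying-diagram structure supplies the strong $N^\square\{\toprightcorner^\vop,\bottomleftcorner^\vop\}$-locality, and the mono/epi hypothesis supplies $N^\square\{\H(\mono),\V(\mono^\opp)\}$-locality.
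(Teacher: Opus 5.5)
Your proposal is correct and follows the paper's proof essentially verbatim: the forward direction comes from axioms (I') and (M'), and for the converse you take (I') and (M') as hypotheses, deduce (K'') from stability via Proposition~\ref{stableproof}, and deduce (Z') from pointedness via the converse half of Proposition~\ref{pointedproof}. Your extra discussion of how the preaugmentation $Y_{-1}$ sits over objects of $\D$ makes explicit a point the paper's proof leaves implicit, but it does not change the argument.
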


\begin{proof}
The ``only if'' direction follows from axioms (I') and (M') of a reverse CGW category. For the ``if'' direction, assume $Y$ is pointed stable double Segal and isomorphic to $N^\square\D$. Axioms (I') and (M') hold by assumption. By Proposition~\ref{stableproof}, axiom (K'') follows from $N^\square\D$ being stable. By Proposition~\ref{pointedproof}, axiom (Z') follows from $N^\square\D$ being pointed. Hence $\D$ is reverse CGW.
\end{proof}

This theorem shows that the reverse CGW categories are the pointed stable double Segal spaces that are strictly local with respect to the set
\[ \nerve^{\mycube} \left(\H(S_{\cl}) \cup \V(S_{\cl}) \cup ([1] \boxtimes_v S_{\cl}) \cup ([1] \boxtimes_h S_{\cl})\right) \;\bigcup\; N^\square\{\H(\mono),\V(\mono^\opp)\}. \]

It also shows that a double category with shared isomorphisms whose classifying diagram is pointed and stable need only additionally satisfy axiom (M') to be reverse CGW.

\section{$S_\bullet$-constructions} \label{sdotsection}

%\subsection{The $S_\bullet$-construction and 2-Segal spaces}

%\bsnoteil{made minor edits in this section for pointedness}

In this section, we recall the $S_\bullet$-construction and use it to put some of our main results in context.  We briefly sketch the role of pointed stable double Segal spaces as a kind of universal input for algebraic $K$-theory, and then explain how the $S_\bullet$-construction for CGW categories fits into this picture.

%\jbnoteil{If we minimize the use of ``augmented" elsewhere, do we want to do so here, too?}
%\bsnoteil{Yeah I think pointed should be enough here too}
%
%\jbnoteil{Can make the following definition pointed by identifying all the objects $(i,i)$}

\begin{definition}
	For any $n \geq 0$, define a discrete pointed bisimplicial space $W[n]$ by
	\[ W[n]_{k,\ell} = \{(i_0, \ldots, i_k, j_0, \ldots, j_\ell) \mid i_0 \leq \cdots \leq i_k \leq j_0 \leq \cdots \leq j_\ell \} \]
	for $k,\ell \geq 0$, with the $(0,0)$-simplices  
	\[ W[n]_{-1} = \{(i,i) \mid 0 \leq i \leq n\} \]
	identified to a single point.
\end{definition}

The idea is that $W[n]$ should encode diagrams of the shape given in \eqref{sdotdiagram} with the diagonal objects $(i,i)$ identified so that $W[n]$ is pointed.

%shaped like the following staircase diagram including all possible compositions of the arrows and squares therein:
%\[ \begin{tikzcd}
%(0,0) \rar[tail] & (0,1) \rar[tail] \dar[etail] & (0,2) \rar[phantom]{\cdots} \dar[etail] & (0,n \text{-} 1) \rar[tail] \dar[etail] & (0,n) \dar[etail] \\
%& (1,1) \rar[tail] & (1,2) \rar[phantom]{\cdots} & (1,n \text{-} 1) \rar[tail] \dar[phantom]{\vdots} & (1,n) \dar[phantom]{\vdots} \\
%& & \ddots & (n \text{-} 2,n \text{-} 1) \rar[tail] \dar[etail] & (n \text{-} 2,n) \dar[etail] \\
%& & & (n \text{-} 1,n \text{-} 1) \rar[tail] & (n \text{-} 1,n) \dar[etail] \\
%& & & & (n,n).
%\dist{1-2}{2-3} \dist{1-4}{2-5} \dist{3-4}{4-5}
%\end{tikzcd} \]

\begin{definition}\label{defSdot}
	Let $Y$ be a pointed stable double Segal space.  Define the simplicial space $S_\bullet(Y)$ by
	\[ S_n(Y) = \Map(W[n],Y), \]
	the mapping space in the category of bisimplicial spaces.
\end{definition}

The motivation for the development of pointed stable double Segal spaces was the desire for the following theorem.  A reader unfamiliar with the theory of model categories should take this result as giving a suitable equivalence on the level of homotopy theories.

\begin{theorem}[{\cite[Proposition 7.5]{BOORSobj}}] \label{Sdotequiv}
	There is a Quillen equivalence between the model category for pointed stable double Segal spaces and the model category for reduced 2-Segal spaces whose right adjoint is the functor $S_\bullet$.
	%There is a Quillen equivalence between the model category for augmented stable double Segal spaces and the model category for 2-Segal spaces whose right adjoint is the functor $S_\bullet$.  Restricting to pointed stable double Segal spaces, we obtain a Quillen equivalence with reduced 2-Segal spaces, namely, those whose 0-space is contractible.
\end{theorem}

We do not need the notion of a 2-Segal space elsewhere in this paper, but we briefly recall their definition for the purposes of understanding the content of \cref{Sdotequiv}.

\begin{definition}
	A \emph{2-Segal space} is a simplicial space $V \colon \Deltaop \rightarrow \SSet$ such that certain \emph{2-Segal maps}
	\[ V_n \rightarrow \underbrace{V_2 \times_{V_1} \cdots \times_{V_1} V_2}_{n-1} \]
	induced from triangulations of regular $(n+1)$-gons for each $n\geq 3$, are weak equivalences of simplicial sets. A 2-Segal space $V$ is \emph{reduced} if $V_0$ is contractible.
\end{definition}

We refer the reader to \cite{dk} and \cite{gckt} for more information; in the latter reference 2-Segal spaces are instead called \emph{decomposition spaces}.  Both sets of authors proved that a main source of examples of 2-Segal spaces is the output of the $S_\bullet$-construction when applied to an exact category.  Theorem \ref{Sdotequiv} is a generalization that establishes that pointed stable double Segal spaces provide a universal input to the $S_\bullet$-construction so that the output has the structure of a 2-Segal space.

%\subsection{The relationship to the $S_\bullet$-construction for CGW categories}

We have shown that a CGW category give rise to a pointed stable double Segal space; here, we prove that the construction of its $K$-theory space factors through the $S_\bullet$-construction for pointed stable double Segal spaces.

%\bsnoteil{Potentially depending on whether we see this as the main result or as a byproduct of the results above, it may be worth writing out the original definition(s) of $K(\D)$.}

Recall that a CGW category $\D$ has an associated $K$-theory space $K(\D)$, as shown in \cite[Definition 4.1]{CZ-cgw}.  While $K(\D)$ is originally defined using an analogue of Quillen's $Q$-construction, in the following proof we use an equivalent formulation using a variant of the $S_\bullet$-construction.

\begin{corollary}
	If $\D$ is a CGW category, its $K$-theory space $K(\D)$ is weakly equivalent to $\Omega|S_\bullet(N\D^v)|$.
\end{corollary}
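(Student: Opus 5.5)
The plan is to pass to the reverse CGW category $\D^v := \D^\vop$, use the main comparison theorem above to see that $N\D^v$ is a pointed stable double Segal space, and then identify $S_\bullet(N\D^v)$ levelwise with the $S_\bullet$-construction of \cite{CZ-cgw}. I take $N\D^v$ to mean $N^\square(\D^\vop)$. First, by Proposition~\ref{cgwcomparison}, $\D$ is an abstract CGW category, so $\D^\vop$ is a reverse CGW category by design of Definition~\ref{reversecgw}. Hence $N^\square(\D^\vop)$ is a pointed stable double Segal space, and $S_\bullet(N^\square\D^\vop)$ is defined.

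Second, I would compute $S_n(N^\square\D^\vop)=\Map(W[n],N^\square\D^\vop)$. Since $W[n]$ is discrete and pointed, a map $W[n]\to N^\square\D^\vop$ in simplicial degree $0$ is a pointed bisimplicial map from a double Segal set into the triple nerve. The underlying double category of $W[n]$ is freely generated by the staircase \eqref{sdotdiagram}, with the diagonal objects sent to the preaugmentation, i.e.\ to zero objects. Using the adjunction between double nerves and double categories, together with Proposition~\ref{tripleSegallocal}, a $q$-simplex of the mapping space is then a string of $q$ composable natural shared isomorphisms of such staircase diagrams in $\D^\vop$ with zeros on the diagonal. So $S_n(N^\square\D^\vop)$ is the nerve of the groupoid of staircase diagrams of shape \eqref{sdotdiagram} in $\D^\vop$ and their natural isomorphisms. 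Reversing vertical arrows, these are exactly the diagrams of the CGW $S_\bullet$-construction of \cite{CZ-cgw}: rows of horizontal morphisms starting at $\varnothing$, distinguished squares, and vertical morphisms out of $\varnothing$. Because the face and degeneracy maps on both sides are induced by the same cosimplicial structure on the staircases, this identification is simplicial.

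Third, I would invoke the result from \cite{CZ-cgw} that $K(\D)$, defined via the $Q$-construction, is weakly equivalent to $\Omega|\mathrm{iso}\,S_\bullet\D|$, where $\mathrm{iso}\,S_\bullet\D$ is the simplicial groupoid just described. Taking realizations, the bisimplicial set $S_\bullet(N^\square\D^\vop)$ has diagonal realization $|\mathrm{iso}\,S_\bullet\D|$, which gives the corollary.

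The main obstacle is the second step. There are two points to check. The pointedness of $W[n]$ must force the diagonal entries to be exactly zero objects rather than merely isomorphic to them. The mapping space must also be identified with the nerve of the isomorphism groupoid, rather than something involving arbitrary transverse data. For the first point I would use that $W[n]_{-1}$ is a single point mapping into $(N^\square\D)_{-1}$, the nerve of the contractible groupoid of zero objects. For the second, I would use that every level $(N^\square\D)_{m,n}$ is the nerve of a groupoid, so maps out of a discrete object land in the groupoid of natural shared isomorphisms, as in the proof of Proposition~\ref{stableproof}. A secondary point is matching the conventions of \cite{CZ-cgw}, which are only valid up to isomorphism, with the strict ones here. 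Since the choice of zero object is contractible, this costs only a weak equivalence.
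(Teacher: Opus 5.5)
\textbf{Gap in step 3.} Your first two steps agree with the paper. Its proof also ends by observing that the simplicial space $iS_\bullet\D$, made of staircase diagrams and their natural isomorphisms, is exactly $S_\bullet(N\D^v)$. That is what your levelwise identification of $\Map(W[n],N^\square\D^\vop)$ with the nerve of an isomorphism groupoid establishes. The gap is in your third step. The result of \cite{CZ-cgw} you need (Theorem 7.8 there) identifies $K(\D)$ with $\Omega|s_\bullet\D|$, where $s_\bullet\D$ is a simplicial \emph{set}. Its $n$-simplices are just the staircase diagrams, i.e.\ $s_n\D\cong\Hom(W[n],\nerve^\square\D^v)$, with no isomorphisms between them. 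It does not give $\Omega|\iso S_\bullet\D|$ for the simplicial groupoid. So you are missing the comparison $|s_\bullet\D|\simeq|iS_\bullet\D|$. This is a CGW version of Waldhausen's standard argument \cite{Waldhausen}, and the paper takes it from \cite[Lemma 5.8]{SS-ecgw}.

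\textbf{Why this step has content.} The inclusion of $s_n\D$ as the vertices of $\nerve(iS_n\D)$ is not a levelwise weak equivalence. The target has one component per isomorphism class of staircases, each of the homotopy type of $B\mathrm{Aut}$ of that staircase, rather than one contractible point per diagram. For instance, for finite sets, $\nerve(iS_1\D)\simeq\coprod_{[A]}B\mathrm{Aut}(A)$, while $s_1\D$ is discrete. The equivalence therefore only holds after realizing in the $S_\bullet$ direction. In Waldhausen's argument one proceeds as follows:
\begin{enumerate}
\item Fix the nerve degree $q$ and recognize $[n]\mapsto\nerve_q(iS_n\D)$ as the $s_\bullet$-construction on the category of $q$-strings of (shared) isomorphisms in $\D$.
\item Show that the constant-string inclusion and evaluation at the first entry induce inverse equivalences after realization.
\item For this, use that a natural isomorphism between such functors produces a simplicial homotopy on $s_\bullet$.
\end{enumerate}

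\textbf{What to do.} You need to either supply this argument in the double-categorical setting or cite it. Once you do, your steps assemble exactly as in the paper. Note also that the point you flagged as the main obstacle, identifying the mapping space with the nerve of the isomorphism groupoid, is the part the paper treats as immediate from the definitions. The real work lies in the step you attributed to \cite{CZ-cgw}.
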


\begin{proof}
	 As shown in \cite[Theorem 7.8]{CZ-cgw} $K(\D)$ is weakly equivalent  $\Omega|s_\bullet\D|$, where $s_\bullet\D$ is a simplicial set with $s_n\D$ in bijection with $\Hom_{\SSSet}(W[n],\nerve^\square\D^v)$. By a variant of a standard argument of Waldhausen \cite{Waldhausen}, $|s_\bullet\D|$ is weakly equivalent to the realization of the simplicial space $iS_\bullet\D$, where $iS_n\D$ the nerve of the groupoid with object set $s_n\D$ and morphisms given by natural transformations between diagrams in $\D$.  A double categorical version of this argument is given in \cite[Lemma 5.8]{SS-ecgw}.  This simplicial space $iS_\bullet\D$ is precisely equal to $S_\bullet(N\D^v)$ as defined in \cref{defSdot}, completing the proof.
\end{proof}

\bibliographystyle{alpha}
\bibliography{BZ}

@article{CZ-cgw,
  title={D{\'e}vissage and localization for the {G}rothendieck spectrum of varieties},
  author={Campbell, Jonathan A. and Zakharevich, Inna},
  journal={Adv. Math.},
  volume={411},
  pages={108710},
  year={2022},
  publisher={Elsevier}
}

@article{BOORSset,
  title={2-{S}egal sets and the {W}aldhausen construction},
  author={Bergner, Julia E. and Osorno, Ang{\'e}lica M. and Ozornova, Viktoriya and Rovelli, Martina and Scheimbauer, Claudia I.},
  journal={Topology Appl.},
  volume={235},
  pages={445--484},
  year={2018},
  publisher={Elsevier}
}

@article{BOORSobj,
  title={2--{S}egal objects and the {W}aldhausen construction},
  author={Bergner, Julia E. and Osorno, Ang{\'e}lica M. and Ozornova, Viktoriya and Rovelli, Martina and Scheimbauer, Claudia I.},
  journal={Algebr. Geom. Topol.},
  volume={21},
  number={3},
  pages={1267--1326},
  year={2021},
  publisher={Mathematical Sciences Publishers}
}

@book{jbbook,
  title={The homotopy theory of $(\infty, 1)$-categories},
  author={Bergner, Julia E},
  volume={90},
  year={2018},
  publisher={Cambridge University Press}
}

@article{rezk2001model,
  title={A model for the homotopy theory of homotopy theory},
  author={Rezk, Charles},
  journal={Trans. Amer. Math. Soc.},
  volume={353},
  number={3},
  pages={973--1007},
  year={2001}
}

@article{intercategories,
  title={Intercategories},
  author={Grandis, Marco and Par{\'e}, Robert},
  journal={Theory Appl. Categ.},
  volume={30},
  number={38},
  pages={1215--1255},
  year={2015}
}

@article{nfold,
  title={A {T}homason model structure on the category of small $n$-fold categories},
  author={Fiore, Thomas M. and Paoli, Simona},
  journal={Algebr. Geom. Topol.},
  volume={10},
  number={4},
  pages={1933--2008},
  year={2010},
  publisher={Mathematical Sciences Publishers}
}

@book{dk,
  title={Higher {S}egal Spaces},
  author={Dyckerhoff, Tobias and Kapranov, Mikhail M. and others},
  volume={2244},
  year={2019},
  publisher={Springer}
}

@article{gckt,
  title={Decomposition spaces, incidence algebras and {M}{\"o}bius inversion {I}: Basic theory},
  author={G{\'a}lvez-Carrillo, Imma and Kock, Joachim and Tonks, Andrew},
  journal={Adv. Math.},
  volume={331},
  pages={952--1015},
  year={2018},
  publisher={Elsevier}
}

@inproceedings{Waldhausen,
  title={Algebraic {K}-theory of spaces},
  author={Waldhausen, Friedhelm},
  booktitle={Algebraic and Geometric Topology: Proceedings of a Conference held at Rutgers University, New Brunswick, USA July 6--13, 1983},
  pages={318--419},
  year={2006},
  organization={Springer}
}

@inproceedings{KellyAdjunction,
  title={Adjunction for enriched categories},
  author={Kelly, G. Max},
  booktitle={Reports of the Midwest Category Seminar III},
  pages={166--177},
  year={2006},
  organization={Springer}
}

@article{SS-ecgw,
    author = {Sarazola, Maru and Shapiro, Brandon T.},
    title = {Additivity and fiber sequences for combinatorial {K}-theory},
    year = {2021},
    eprint={2107.07701},
    archivePrefix={arXiv},
    primaryClass={math.KT},
    note = {\href{https://arxiv.org/abs/2107.07701}{arXiv:2107.07701}}
}

@article{modelbicats,
  title={Model bicategories and their homotopy bicategories},
  author={Descotte, Mar{\'\i}a E. and Dubuc, Eduardo J. and Szyld, Mart{\'\i}n},
  journal={Adv. Math.},
  volume={404},
  pages={108455},
  year={2022},
  publisher={Elsevier}
}

@book{hirschhorn,
  title={Model Categories and Their Localizations},
  author={Hirschhorn, Philip S.},
  number={99},
  year={2003},
  publisher={American Mathematical Soc.}
}

@article{fiorepaolipronk,
  title={Model structures on the category of small double categories},
  author={Fiore, Thomas M. and Paoli, Simona and Pronk, Dorette},
  journal={Algebr. Geom. Topol.},
  volume={8},
  number={4},
  pages={1855--1959},
  year={2008},
  publisher={Mathematical Sciences Publishers}
}

@article{segal1968classifying,
  title={Classifying spaces and spectral sequences},
  author={Segal, Graeme},
  journal={Publ. Math. IH\'ES},
  volume={34},
  pages={105--112},
  year={1968}
}

\end{document}